\setlist[enumerate]{leftmargin=.5in}
\setlist[itemize]{leftmargin=.5in}
\definecolor{Gray}{gray}{0.85}
\newtheorem{theorem}{Theorem}
\newtheorem{proposition}[theorem]{Proposition}
\newtheorem{lemma}[theorem]{Lemma}
\theoremstyle{definition}
\newtheorem{definition}[theorem]{Definition}
\newtheorem{example}[theorem]{Example}
\newtheorem{remark}[theorem]{\bf Remark}
\newcommand{\RR}{\mathbb{R}}
\newcommand{\NN}{\mathbb{N}}
\newcommand{\ZZ}{\mathbb{Z}}
\newcommand{\QQ}{\mathbb{Q}}
\newcommand{\vf}{\boldsymbol{f}}
\newcommand{\vx}{\boldsymbol{x}}
\newcommand{\vk}{\boldsymbol{k}}
\newcommand{\vg}{\boldsymbol{g}}
\newcommand{\vs}{\boldsymbol{s}}
\newcommand{\valpha}{\boldsymbol{\alpha}}
\newcommand{\vxbar}{\boldsymbol{\bar{x}}}
\newcommand{\vkbar}{\boldsymbol{\bar{k}}}
\newcommand{\vv}{\boldsymbol{v}}
\newcommand{\vect}[1]{\boldsymbol{#1}}
\newcommand{\false}{\operatorname{false}}
\newcommand{\true}{\operatorname{true}}
\newcommand{\Ord}[1]{ {O}( #1 )}
\newcommand{\DP}[2]{ \ensuremath{ \frac{\partial #1 }{\partial #2 } } }
\newcommand{\D}[2]{ \ensuremath{ \frac{d #1 }{d #2 } } }
\newcommand{\transpose}[1]{\ensuremath{ #1^\mathsf{T}}}
\newcommand{\ConsLaws}{\mathcal{C}(F)}
\newcommand{\ParPolConsLaws}{\mathcal{S}} 
\newcommand{\Ideal}{\operatorname{Ideal}}
\newcommand{\Syz}{\operatorname{Syz}}
\newcommand{\Syzgrad}{\operatorname{Syz}_{\grad}}
\newcommand{\myrank}[1]{r_{#1}}
\newcommand{\cor}[1]{{\color{black} #1}}
\title{A Computational Approach to Polynomial Conservation Laws}
\author{Aurélien Desoeuvres$^{1}$, Alexandru Iosif$^7$, Christoph L\"uders$^6$, \\
  Ovidiu Radulescu$^{1,*}$, Hamid Rahkooy$^8$, Matthias Sei\ss$^2$, Thomas Sturm$^{3,4,5}$ }
 \date{
    $^1$ University of Montpellier and CNRS LPHI,  Montpellier, France\\
    $^2$ University of Kassel, Kassel, Germany \\
    $^3$ CNRS, INRIA, and the University of Lorraine, Nancy, France\\
    $^4$ Max Planck Institute for Informatics, Saarbr\"ucken, Germany  \\
    $^5$ Saarland University, Saarbrücken, Germany \\
    $^6$ University of Bonn, Bonn, Germany \\
    $^7$ Rey Juan Carlos University, Madrid, Spain\\
    $^8$ University of Oxford, Oxford, United Kingdom.\\
    $^*$ corresponding author \href{mailto:ovidiu.radulescu@umontpellier.fr}{ovidiu.radulescu@umontpellier.fr} \\
 \bf \today
 }
\begin{document}

\maketitle

\begin{abstract}
For polynomial ODE models,
we introduce and discuss the concepts of
  exact and approximate conservation laws, which are the first integrals of
  the full and truncated sets of ODEs. For fast-slow \cor{systems}, truncated
  ODEs describe the fast dynamics. We define compatibility classes as
  subsets of the state space, obtained by equating the
  conservation laws to constants.  A set of conservation laws is
  complete when the corresponding compatibility classes contain 
  a finite number of steady states.  Complete sets of
  conservation laws can be used for \cor{model order reduction and 
  for studying the multistationarity of the model}.
  We provide algorithmic methods for computing linear, monomial, and
  polynomial conservation laws of 
  \cor{polynomial ODE models}
  and for testing their
  completeness. The resulting conservation laws and their completeness
  are either independent or dependent on the parameters. In the
  latter case, we provide parametric case distinctions.  In
  particular, we propose a new method to compute polynomial
  conservation laws by comprehensive Gröbner systems and syzygies.

{\bf Keywords}: First integrals, chemical reaction networks, polynomial conservation
laws, syzygies, comprehensive Gr\"obner systems.
\end{abstract}

\section{Introduction} 
 
\cor{Several important models in physics, chemistry, biology, 
 ecology, economics, and engineering can be written as systems of polynomial ODEs.} 
{\bf Conservation laws} are first integrals \cor{of such models}, i.e.~quantities that are constant on any solution of the ODEs. When they exist, they can be used for \cor{model order}
reduction \cite{boulier2011model}. 
A reduced model can be more easily simulated and analysed. For example, 
ODEs with $n$ variables and $n-1$ independent conservation laws are solvable by quadratures \cite{arnold1978mathematical}. 

\cor{The use of first integrals for eliminating variables is an exact model reduction method \cite{boulier2011model}. However, many model  reduction methods are approximate. 
For instance, the quasi-steady state approximation is an approximate model 
reduction method based on the elimination of fast variables } \cite{tikh,fenichel1979geometric,gwz3,radulescu2012reduction,kruff2020algorithmic,desoeuvres2022a}. 
\cor{This reduction method fails when the fast dynamics has first integrals. }
We introduce here the {\bf approximate conservation laws,}  a key concept 
for model reduction by variable pooling \cor{used when the fast dynamics has first integrals}
\cite{schneider2000model,auger2008aggregation,desoeuvres2022a}.

Conservation laws are 
also used for \cor{studying} the steady states of \cor{ODE models}, \cor{for instance in applications 
that deal with} multistationarity \cite{feliu2012preclusion}. In the case 
of linear conservation laws, the set of steady states can be partitioned 
into stoichiometric compatibility classes
\cite{feinberg1974dynamics}
that are characterized by fixed, different values of the linear conservation
laws.  Here we extend this definition \cor{to polynomial} conservation laws. 
Compatibility classes are invariant sets of \cor{the ODE flow} and, under some
conditions that we call {\bf completeness}, contain a finite number of steady states. 
\cor{The completeness of approximate conservation laws is also important for model reduction
applications \cite{desoeuvres2022a}.}

A lot of effort has been put into computing linear conservation laws of \cor{ODE models} \cite{schuster1991determining,soliman2012invariants}. 
Non-linear conservation laws 
have been addressed using Darboux polynomials \cite{prelle1983elementary,man1993computing,cheze2011computation,mahdi2017conservation}. 
\cor{Monomial first integrals were discussed in~\cite{goldman1987integrals,mahdi2017conservation}. For a thorough introduction to first integrals and related computational methods, refer to Zhang's book~\cite{zhang2017integrability}. 
}
Polynomial first integrals were used for formal proofs 
of hybrid systems safety in \cite{sogokon2021pegasus},
where they were computed using the method of undetermined coefficients. 

In this paper, we propose a new method for computing polynomial conservation laws
using syzygies,  which are relations among the polynomial r.h.s of 
the  ODEs. 
\cor{
Furthermore, we use Comprehensive Gr\"obner Bases techniques for partitioning the parameter space into branches such that the syzygies (and hence, conservation laws) are invariant under evaluation in each branch.
}
We also propose algorithmic solutions for testing the independence 
and the completeness of the computed conservation laws. 

\cor{The structure of this paper is as follows. In the next section, we introduce the polynomial ODE models. 
We show that any polynomial ODE model can be written as a chemical reaction network (CRN) 
model. In Section 3 we discuss exact and approximate conservation laws, that are important
in model reduction applications. In Section 4 we provide methods to test the independence 
and completeness of the conservation laws. Section 5 exposes extant methods for computing
linear conservation laws. The Section 6.1 is dedicated to monomial conservation laws. 
In Section 6.2 we present a new method to compute polynomial conservation laws using syzygies. }

\section{Models}
In this paper, we consider \cor{polynomial ODE models
\begin{equation} \label{eq:fi}
\dot x_1 = f_1(\vk,\vx), \, \dots , \, \dot x_n = f_n(\vk,\vx),
\end{equation}
whose r.h.s. are integer coefficient polynomials in the variables $\vx$ and parameters $\vk$
$f_i(\vk,\vx)  \in \ZZ[\vk,\vx]=\ZZ[k_1,\dots,k_r,x_1,\dots,x_n]$.
Up to a redefinition of parameters, the functions 
$f_i$ can be considered homogeneous of degree one in 
$\vk$.
We denote the vector of right hand sides of  \eqref{eq:fi} by 
$$\vect{F}(\vk,\vx) = \transpose{(f_1(\vk,\vx),f_2(\vk,\vx),\ldots,
f_n(\vk,\vx))}.$$ 

For certain applications we consider that
the functions $f_i$ have the particular structure of chemical reaction networks (CRNs) \cite{aris1963independence}:
\begin{equation} \label{eq:crn}
f_i(\vk,\vx) = \sum_{j=1}^r S_{ij} k_j   
\vx^{\vect{\alpha}_j}.
\end{equation}
The monomials $\vx^{\vect{\alpha}_j}= x_1^{\alpha_{j1}}\cdots x_n^{\alpha_{j_n}}$ appearing in the 
right hand sides of \eqref{eq:crn} are defined 
by $r$ multi-indices $\vect{\alpha}_j=(\alpha_{j1},\ldots,\alpha_{jn}) \in \NN^n$ and  
for each monomial the parameter $k_j$ represents a rate constant. The parameters $\vk=(k_1,\dots,k_r)$ take values in $\RR_+^*$ and 
the integer coefficients $S_{ij}$ form a matrix $\vect{S}=(S_{ij}) \in \ZZ^{n\times r}$ which is called
the stoichiometric matrix. 

Mass action chemical reaction networks with $n$ species $A_1,\ldots,A_n$ 
and concentrations $\vx = (x_1,\ldots,x_n)$ satisfy \eqref{eq:crn}.
For a mass action reaction 
$$\bar{\alpha}_{j1} A_1 +  \ldots + \bar{\alpha}_{jn} A_n \rightarrow 
\beta_{j1} A_1 +  \ldots + \beta_{jn} A_n$$ 
we have $S_{ij} = \beta_{ji} - \bar{\alpha}_{ji}$ and  
the reaction rate is  $k_j \vx^{\vect{\bar{\alpha}}_j}$, i.e.~the stoichiometric coefficients $\vect{\bar{\alpha}}_{j}$ and the 
multi-indices $\vect{\alpha}_{j}$ coincide. 
This constraint leads to algebraic properties of CRNs exploited in {\bf chemical reaction network theory} (CRNT), which has been
initiated by Horn, Jackson and Feinberg \cite{feinberg1974dynamics,Feinberg:19a}. 
However, this may not be the case in general, meaning that the multi-indices $\vect{\alpha}_{j}$ and $\vect{\bar{\alpha}}_j$ in $S_{ij} = \beta_{ji} - \bar{\alpha}_{ji}$ and $k_j \vx^{\vect{\alpha}_j}$
 are not necessarily equal. 
As a matter of fact the CRNs with monomial rate functions used here do not reduce
the generality of polynomial ODE models described by \eqref{eq:fi}.
This is shown by the following theorem.

}


\begin{theorem}\label{theorem:crn}
Any system of polynomial ODEs 
$$\dot x_1 = f_1(\vk,\vx), \, \dots, \, \dot x_n = f_n(\vk,\vx)$$ 
where each $f_i(\vk,\vx) \in \ZZ[\vk,\vx]$ is an integer coefficient polynomial 
in $\vk$ and $\vx$, homogeneous of degree one in 
$\vk=(k_1,\dots,k_r)$, 
can be written as \eqref{eq:fi},\eqref{eq:crn} for appropriate matrix 
$\vect{S}$ and \cor{multi-indices $\vect{\alpha}_j$}. 
\end{theorem}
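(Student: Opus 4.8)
The plan is to reduce everything to a bookkeeping argument on the monomials of each $f_i$, with the degree-one homogeneity in $\vk$ as the crucial structural input. First I would expand each right-hand side $f_i(\vk,\vx)$ as an integer linear combination of monomials in the joint variables $(\vk,\vx)$. Because $f_i$ is homogeneous of degree one in $\vk$, every monomial occurring in $f_i$ has the form $c\, k_j\, \vx^{\vect{\alpha}}$ for a single parameter index $j\in\{1,\dots,r\}$, a multi-index $\vect{\alpha}\in\NN^n$, and an integer $c\in\ZZ$. In other words, no monomial carries two different rate constants or a power of a rate constant higher than one; this is exactly what lets each term match the shape $S_{ij}k_j\vx^{\vect{\alpha}_j}$ of \eqref{eq:crn}.

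Next I would pass from parameters to reactions. For each $f_i$ write $f_i(\vk,\vx)=\sum_{j=1}^r k_j\, g_{ij}(\vx)$ with $g_{ij}\in\ZZ[\vx]$, and collect over all equations the finite set of distinct pairs $(j,\vect{\alpha})$ such that the monomial $\vx^{\vect{\alpha}}$ appears in some $g_{ij}$. Enumerate these pairs as $\ell=1,\dots,R$, recording for each the parameter index $j(\ell)$ and the multi-index $\vect{\alpha}_\ell$; these multi-indices are the rate monomials of the constructed network, and the rate constant of reaction $\ell$ is $k_{j(\ell)}$. Finally, define the stoichiometric entry $S_{i\ell}\in\ZZ$ to be the coefficient of $k_{j(\ell)}\vx^{\vect{\alpha}_\ell}$ in $f_i$. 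By construction $f_i=\sum_{\ell=1}^R S_{i\ell}\, k_{j(\ell)}\, \vx^{\vect{\alpha}_\ell}$, which, after relabelling the reaction index, is precisely the CRN form \eqref{eq:crn} with stoichiometric matrix $\vect{S}=(S_{i\ell})\in\ZZ^{n\times R}$.

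The step that needs care is the one I expect to be the main (if modest) obstacle: the CRN form requires a single reaction to contribute the same monomial $\vx^{\vect{\alpha}_\ell}$ to every equation $f_1,\dots,f_n$, i.e.\ $\vect{\alpha}_\ell$ must depend on $\ell$ alone and not on $i$. A given parameter $k_j$ will in general multiply many different monomials across the $f_i$, so one cannot take a single reaction per parameter; indexing reactions by the pairs $(j,\vect{\alpha})$ is what enforces this per-monomial uniformity. A consequence worth stating explicitly is that the number of reactions $R$ may exceed the number of parameters $r$, and that distinct reactions $\ell$ may share the same rate constant $k_{j(\ell)}$. This is harmless: the identity \eqref{eq:crn} constrains only the structural form of $\vF$, and repeated rate constants are permitted (or one may introduce fresh parameter names). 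Integrality of $\vect{S}$ is immediate since the $f_i$ have integer coefficients, and homogeneity of degree one in $\vk$—assumed in the statement, and achievable in general up to the redefinition of parameters noted above—is exactly the hypothesis that makes the monomial-by-monomial matching possible.
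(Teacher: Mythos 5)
Your proof is correct and follows essentially the same route as the paper, whose proof simply points to Algorithm~\ref{alg:S}: that algorithm likewise collects the distinct monomials $k_j\vx^{\vect{\alpha}_j}$ (declaring two monomials distinct when either the rate constant or the multi-index differs, exactly your indexing by pairs) and accumulates the integer coefficients into the stoichiometric matrix. Your explicit remark that the number of reactions may exceed $r$ and that distinct columns may share a rate constant is a correct reading of the same construction, so there is nothing to add.
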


\begin{proof}
A constructive proof is given by Algorithm~\ref{alg:S}.
\end{proof}
\begin{algorithm}[H] 
\begin{algorithmic}[1]
    \caption{\label{alg:S}$\operatorname{Smatrix}$}
 \REQUIRE 
 An ODE system whose r.h.s.~are sums of rational monomials (meaning we also allow negative powers) in $\vk$ and $\vx$, homogeneous of degree one in $\vk$, with integer coefficients:
 $$ \dot x_i = f_i(\vk,\vx) = \sum_{j =1}^{r_i} z_{i j} k_{ij}  \vx^{\vect{\beta}_{ij} } $$ where $z_{i j} \in \ZZ$, $\vect{\beta}_{ij} \in \ZZ^n$,
 $k_{ij}\in \{k_1,\dots,k_r\}$, for 
 $1\leq i \leq n$, $1\leq j \leq r_i$.
 \smallskip
\ENSURE An integer coefficient matrix $\vect{S}$ \cor{ and a set of multi-indices $ \vect{\alpha}_1,\ldots, \vect{\alpha}_r \in \NN^n $ 
such that $f_i(\vk,\vx) = \sum_{j=1}^r S_{ij} k_j \vx^{\vect{\alpha}_j}$  for all $1\leq i \leq n$}.
\STATE{Compute a list of distinct monomials
$$ \{M_j= k_j  \vx^{\vect{\alpha}_j}, 1\leq j\leq r \} = \bigcup_{i=1}^n \bigcup_{j=1}^{r_i}  \{ k_{ij}  \vx^{\vect{\vect{\beta}}_{ij}}\}.$$ Two 
monomials $k_{ij}  \vx^{\vect{\beta}_{ij}}$, 
$k_{i'j'}  \vx^{\vect{\beta}_{i'j'}}$
are distinct if $\vect{\beta}_{ij} 
\neq \vect{\beta}_{i'j'}$ or 
$k_{ij}  \neq k_{i'j'} $.}
 \FOR{$i:=1$ \TO $n$}
 \FOR{$j:=1$ \TO $r$}
 \STATE{$S_{ij}=0$}
 \FOR{$l:=1$ \TO $r_i$}
 \IF{$k_{il} \vx^{\vect{\beta}_{il}} = M_j$}
\STATE{$S_{ij} =  S_{ij} + z_{i l}$ }
\ENDIF
\ENDFOR
\ENDFOR
\ENDFOR
\end{algorithmic}
\end{algorithm}


\begin{remark}\label{rem:remark2}
 In many applications, parameters are not symbolic but given by their numerical values, sometimes linearly dependent over the integers. In this case, the independent parameters and the stoichiometric matrix are hidden behind these numbers. We propose in Algorithm~\ref{alg:findz} 
 a way to still obtain descriptions of the form
 \eqref{eq:crn}, where the $k_{j}$ are linearly independent over the integers and the stoichiometric coefficients are clearly visible. 
 For example, the ODE system 
 $\dot x_1 = 1.5 x_1 x_2, \, \dot x_2  = x_2 + 3 x_1 x_2$
 is transformed into 
 $\dot x_1 = k_1 x_1 x_2, \, \dot x_2  = k_2 x_2 + 2 k_1 x_1 x_2$ with the stoichiometric matrix 
 $$S = \begin{pmatrix} 1 & 0 \\ 2 & 1 \end{pmatrix}.$$
\end{remark}

\begin{remark}\label{rem:remark3} 
In chemical and biochemical applications CRN variables have positive values and the ODE flow preserves the positive orthant. A sufficient condition for this property is
that inputs of Algorithm~\ref{alg:S} satisfy:
$$ \beta_{iji} > 0, \text{whenever } z_{ij} < 0,$$
where $\beta_{iji}$ is the $i^{\text{th}}$ coordinate of the vector $\vect{\beta_{ij}}$. 
This condition ensures that rates of reactions
consuming a species $x_i$ tend to zero whenever the
concentration of this species tends to zero. 
The corresponding condition on the system 
\eqref{eq:fi} reads
$$ \alpha_{ji} > 0, \text{whenever } S_{ij} < 0.$$
However, these conditions can be lifted in other applications.
\end{remark}

\begin{algorithm}[H] 
\begin{algorithmic}[1]
    \caption{\label{alg:findz}$\operatorname{CompareAndSplitCoefficients}$}

 \REQUIRE 
 An ODE system of the form:  $\dot{x_i}=\cor{ f_i(\vk,\vx)} =\sum_{j=1}^m B_{ij}\vx^{\vect{\valpha}_j}$, where $\valpha_j\in \ZZ^n$, $B_{ij}\in \RR $, $m$ is the number of distinct $\valpha_j$.
 \smallskip
\ENSURE 
\cor{ A set of incommensurate real numbers $\{k_1,\ldots,k_r \}$ and a set of integers $\{ z_{ij},  
 1\leq i \leq n, 1\leq j \leq r_i \}$ such that
$$ f_i(\vk,\vx)= \sum_{j =1}^{r_i} z_{i j} k_{ij}  \vx^{\cor{\vect{\beta}_{ij}}}, $$
where $k_{ij}\in \{k_1,\dots,k_r\}$, for 
 $1\leq i \leq n$, $1\leq j \leq r_i$.
   }
\FOR{$h:=1$ \TO $m$}
\FOR{$i:=1$ \TO $n$}
\FOR{$j:=1$ \TO $i$}

\IF{$B_{jh}=0$}
\STATE{$z_{jh}:=1$, $k_{jh}:=0$, $z_{ih}:=\text{sign}(B_{ih})$, $k_{ih}:=B_{ih}/z_{ih}$\\
$\vect{\beta}_{ih}:= \vect{\alpha}_h$, $\vect{\beta}_{jh}:= \vect{\alpha}_h$}

\ELSIF{$B_{ih}/B_{jh}\in \ZZ$}
\STATE{$z_{ih}:=\text{sign}(B_{ih})|B_{ih}/B_{jh}|$, $k_{ih}:=B_{ih}/z_{ih}$, $z_{jh}:=\text{sign}(B_{jh})$, $k_{jh}:=B_{jh}/z_{jh}$
\\ $\vect{\beta}_{ih}:= \vect{\alpha}_h$, $\vect{\beta}_{jh}:= \vect{\alpha}_h$}
\ELSIF{$B_{jh}/B_{ih}\in \ZZ$}
\STATE{$z_{jh}:=\text{sign}(B_{jh})|B_{jh}/B_{ih}|$, $k_{jh}:=B_{jh}/z_{jh}$, $z_{ih}:=\text{sign}(B_{ih})$, $k_{ih}:=B_{ih}/z_{ih}$
\\ $\vect{\beta}_{ih}:= \vect{\alpha}_h$, $\vect{\beta}_{jh}:= \vect{\alpha}_h$
}
\ELSE
\STATE{$z_{jh}:=\text{sign}(B_{jh})$, $k_{jh}:=B_{jh}/z_{jh}$, $z_{ih}:=\text{sign}(B_{ih})$, $k_{ih}:=B_{ih}/z_{ih}$
\\ $\vect{\beta}_{ih}:= \vect{\alpha}_h$, $\vect{\beta}_{jh}:= \vect{\alpha}_h$
}
\ENDIF
\ENDFOR
\ENDFOR
\ENDFOR
\FOR{$i:=1$ \TO $n$} 
\STATE{$r_i:=m$, $j=1$}
\WHILE{$j<r_i$}
\IF{$k_{ij}=0$}
\FOR{$h:=j$ \TO $r_i-1$}
\STATE{/*re-indexing*/ $z_{i j} k_{ij}  \vx^{\vect{\beta}_{ij}}:=z_{i j+1} k_{ij+1}  \vx^{\vect{\beta}_{ij+1}}$}
\ENDFOR
\STATE{$z_{i r_i} k_{ir_i}  \vx^{\vect{\beta}_{ir_i}}:=0$}
\STATE{$r_i:=r_i -1$}
\ENDIF
\STATE{j:=j+1}
\ENDWHILE
\ENDFOR

\end{algorithmic}
\end{algorithm}

\cor{
\begin{remark}
The advantage of the form \eqref{eq:crn} and the reason to use it
even for non-chemical applications is, as will be shown later in Sections~\ref{sec:linear}, and \ref{sec:monomial}, 
that this form facilitates the computation of linear and monomial conservation laws.
\end{remark}
}


\section{Exact Versus Approximate Conservation Laws} \label{sec:motivatingexample}
\label{sec:exdef}
We start this section with a motivating example.
\begin{example}\label{MM}
Let us consider the irreversible 
Michaelis-Menten mechanism, \cor{used as a model of} enzymatic reactions. We choose rate constants corresponding to the so-called quasi-equilibrium, as done by Michaelis and Menten \cite{michaelis1913kinetik}. 
The reaction network for this model is
$$
S + E \underset{k_2}{\overset{k_1}{\rightleftharpoons}} ES    \overset{k_3 \delta}{\longrightarrow} E + P,
$$
where $S$ is a substrate, $E$ is an enzyme, $ES$ is an enzyme-substrate complex and $k_1$,$k_2$,$k_3$ 
are rate constants of the same order of magnitude. Here $0 < \delta < 1$ is a small positive scaling parameter, indicating that the third rate constant  is
small. 

According to mass-action kinetics, the concentrations
$x_1=[S]$, $x_2=[SE]$ and $x_3=[E]$ satisfy the system of 
ODEs
\begin{subequations}
\label{eqsMM}
\begin{eqnarray}
\dot{x_1} &=& - k_1 x_1 x_3 + k_2 x_2 , \label{eqsMM_1} \\
\dot{x_2} &=& k_1 x_1 x_3 - k_2 x_2 - \delta k_3 x_2, \label{eqsMM_2}  \\
\dot{x_3} &=& -k_1 x_1 x_3 + k_2 x_2 + \delta k_3 x_2. \label{eqsMM_3}
\end{eqnarray}
\end{subequations}
We consider the truncated system of ODEs
\begin{subequations}
\label{eqsMMtruncated}
\begin{eqnarray}
\dot{x_1} &=& - k_1 x_1 x_3 + k_2 x_2 , \label{eqsMMtruncated_1} \\
\dot{x_2} &=& k_1 x_1 x_3 - k_2 x_2, \label{eqsMMtruncated_2} \\
\dot{x_3} &=& -k_1 x_1 x_3 + k_2 x_2 \label{eqsMMtruncated_3},
\end{eqnarray}
\end{subequations}
 which is obtained by setting $\delta = 0$ in \eqref{eqsMM}. 
If $k_i, 1\leq i \leq 3$ are 
of order $\Ord{\delta^0}$, then the truncated system \eqref{eqsMMtruncated} describes the
dynamics of the model on fast timescales of order $\Ord{\delta^0}$. 

The steady state of the fast dynamics is obtained by equating to zero the r.h.s.~of \eqref{eqsMMtruncated}. The resulting
condition is called  {\bf quasi-equilibrium (QE)},
because it means that the complex
formation rate $k_1 x_1 x_3$ is equal
to the complex dissociation rate $k_2 x_2$.
In other words, the reversible reaction 
$$ S + E \underset{k_2}{\overset{k_1}{\rightleftharpoons}} ES$$ is at 
equilibrium. The QE condition is reached only at the end of the
fast dynamics and is satisfied
with a precision of order $\Ord{\delta}$ during
the slow dynamics \cite{gorban2010asymptotology}. 

We introduce the linear combinations of 
variables $x_4 = x_1 + x_2$ and $x_5 = x_2 + x_3$, corresponding 
to the total substrate and total enzyme concentrations, respectively.
Addition of equation \eqref{eqsMM_2} and \eqref{eqsMM_3} leads to 
$\dot{x_5}=0$ which means that for solutions of the full system,  $x_5$ is constant for all times.
We will call such a quantity an {\bf exact conservation law}.  
The addition of equation \eqref{eqsMMtruncated_1} 
and \eqref{eqsMMtruncated_2} 
leads to $\dot{x_4}=0$.
This means that $x_4$ is constant for 
solutions of the truncated dynamics, valid at short times $t = \Ord{\delta^{0}}$.
\cor{However, addition of \eqref{eqsMM_1} and \eqref{eqsMM_2}
leads to 
$$\dot{x_4}= - \delta k_3 x_2,$$
meaning that $x_4$ evolves slowly under the full dynamics.  
$x_4$ is not constant at large times $t = \Ord{\delta^{-1}}$.}
We call such a quantity an {\bf approximate conservation law}. 
The quantity $x_5$ is both an exact and approximate conservation law, because is conserved by both \eqref{eqsMMtruncated} and \eqref{eqsMM} and therefore it 
 is constant at all times. 
\end{example}

\cor{
\begin{remark}\label{rem:reduction}
The exact and approximate conservation laws can be used as a new parameter
and as a slow variable of the model, respectively.  Two extant variables can be then eliminated as such
$x_2 = x_4 - x_1, x_3 = k_4 + x_1 - x_4$ where $k_4=x_5$ is the new parameter. 
 In the remaining variables $x_1,x_4$ the full system reads
 \begin{subequations}
\label{eqsSF}
\begin{eqnarray}
\dot{x_1} &=& - k_1 x_1 ( k_4 + x_1 - x_4)   + k_2 (x_4-x_1) , 
\label{eqsSF1} \\
\dot{x_4} &=& - \delta k_3 (x_4-x_1). \label{eqsSF2}
\end{eqnarray}
\end{subequations}
The transformed model \eqref{eqsSF} is typically a slow-fast system with $x_1$ the fast and $x_4$ the slow variable \cite{tikh,fenichel1979geometric} and can be further reduced using the quasi-steady state approximation.  
In the transformed model the fast dynamics \eqref{eqsSF1} has a non-degenerate, hyperbolic steady state. This was not the case for the fast
dynamics \eqref{eqsMMtruncated} in the original formulation of the problem, that has a degenerate steady state manifold. However, the foliations defined by hyperplanes on which the two conservation laws are constant intersect transversally the degenerate state state manifold, resulting into non-degenerate steady states. This property, called completeness, will be detailed in the  Section~\ref{sec:complete}. 
\end{remark}
}

More generally, let us assume that the r.h.s.~of the system in \eqref{eq:fi} can be decomposed into a sum 
of two terms, the first one dominating (being much larger than the second). This decomposition can be obtained
by rescaling the model variables and parameters by powers of a small scaling parameter $\delta$, where $0 < \delta < 1$.
To this aim, we define 
$\vkbar = (\bar k_1,\ldots,\bar k_r)$
and 
$\vxbar = (\bar x_1,\ldots,\bar x_m)$
such that
$k_i = \delta^{e_i} \bar k_i$, 
$x_i = \delta^{d_i} \bar x_i$ and obtain
\begin{equation}\label{eq:splitting}
f_i(\vk,\vx) = f_i^{(1)}(\vk,\vx) + f_i^{(2)}(\vk,\vx),
\end{equation}
where 
$$f_i^{(1)}(\vk,\vx) =\delta^{b_i} \bar f_i^{(1)}(\vkbar,\vxbar) \quad \mathrm{and} \quad 
f_i^{(2)}(\vk,\vx) =\delta^{b'_i} \bar f_i^{(2)}(\vkbar,\vxbar,\delta)$$ with
$b_i < b'_i$ (for details see \cite{kruff2020algorithmic,desoeuvres2022a}).



For a small parameter $\delta$,
the terms  $\delta^{b'_i} \bar f_i^{(2)}(\vkbar,\vxbar,\delta)$ are
dominated by the terms  $\delta^{b_i}\bar f_i^{(1)}(\vkbar,$ $\vxbar)$. This justifies
to introduce the {\bf truncated system} as the system of ODEs obtained
by keeping only the lowest order, dominant terms of the system in
\eqref{eq:fi}, namely
\begin{equation} \label{TruncatedSystem}
 \dot{x}_1 =  f_1^{(1)}(\vk,\vx), \, \ldots , \,
 \dot{x}_n =   f_n^{(1)}(\vk,\vx).
\end{equation}
We denote the vector of right hand sides of the truncated system by 
$$\vect{F}^{(1)}(\vk,\vx)=\transpose{(f_1^{(1)}(\vk ,\vx), \ldots,
f_n^{(1)}(\vk ,\vx ))}.$$

\begin{remark}
  Depending on the application, the truncated system in
  \eqref{TruncatedSystem} can consist of fewer ODEs than the system in
  \eqref{eq:fi}. For instance, in the case of model reduction of
  fast-slow \cor{systems}, only the ODEs corresponding to fast variables are
  included in the truncated system.  In this case, the truncated
  system describes the fast timescale dynamics.
\end{remark}

\begin{definition} 
  \label{def:exactandapproxcons}$\,$
\begin{enumerate}[label=(\alph*)]
\item A function $\phi(\vk,\vx)$ is a {\bf parametric exact
    conservation law} if there exists a non-empty semi-algebraic set
  $V_{\vk} \subseteq \RR^{r}_{+}$ such that for all $\vk \in V_{\vk}$,
  $\phi(\vk,\vx)$ is a first integral of the full system in
  \eqref{eq:fi}, i.e.~if
  \[
    \forall \vk\in V_{\vk} \quad \forall \vx \in \RR^{n}_{+}\quad
    (\sum_{i=1}^n \DP{\phi}{x_i}(\vk,\vx) f_i(\vk,\vx) = 0).
  \]
\item A function $\phi(\vk,\vx)$ is a {\bf parametric approximate
    conservation law} if there exists a non-empty semi-algebraic set
  $V_{\vk} \subseteq \RR^{r}_{+}$ such that for all $\vk \in V_{\vk}$,
  $\phi(\vk,\vx)$ is a first integral of the truncated system in
  \eqref{TruncatedSystem}, i.e.~if
  \[
    \forall \vk \in V_{\vk} \quad \forall \vx \in \RR^{n}_{+} \quad
    (\sum_{i=1}^n \DP{\phi}{x_i}(\vk,\vx) f_i^{(1)}(\vk,\vx) = 0).
  \]
\item A parametric exact (approximate) conservation law of the form
  \[
    \phi(\vk,\vx) =c_1(\vk) x_1 + \dots + c_n (\vk) x_n
  \]
  with $c_i(\vk) \in \RR[\vk]$ is called an {\bf exact (approximate)
    linear conservation law}. If $c_i \geq 0$ for $1\leq i \leq n$,
  the linear conservation law is called {\bf semi-positive}.
\item A parametric exact (approximate) conservation law of the form
  $\phi(\vk,\vx) = a(\vk) x_1^{m_1}$ $ \cdots x_n^{m_n} $ with
  $m_i \in \ZZ$ and $a(\vk) \in \RR[\vk]$, is called an {\bf exact
    (approximate) rational monomial conservation law}.  If
  $m_i \in \ZZ_+$ for $1\leq i \leq n$, the conservation law is called
  {\bf monomial}. For simplicity, in this paper, we will call both
  types {\bf monomial}.
\item A parametric exact (approximate) conservation law of the form
  \[
    \sum_{i=1}^s a_i(\vk) x_1^{m_{1i}} \cdots x_n^{m_{ni}}
  \]
  with $m_{ji} \in \ZZ_+$ and $a_i(\vk) \in \RR[\vk]$ is called a
  parametric {\bf exact (approximate) polynomial conservation law}.
\end{enumerate}
\end{definition}


\begin{remark}\label{rmk:parametricvsuncond}
  In some applications we are interested in conservation laws
  $\phi(\vx)$ that do not depend on $\vk$. For instance, in model reduction of a fast-slow 
  \cor{systems}, approximate conservation laws that are not exact
represent variables of the reduced model \cite{schneider2000model,auger2008aggregation,gorban2010asymptotology,radulescu2012reduction,desoeuvres2022a}. 
When these conservation
  laws  depend only on $\vx$,
  one can show that
  these approximate conservation laws are always slower than the
  variables which compose them, a prerequisite for fast-slow reduction
  \cite{desoeuvres2022a}.
  Moreover \cor{this ensures, for CRN models, that} the reduced model 
  is obtained by graph rewriting operations, i.e~by \cor{species and reactions} pooling and pruning (see \cite{radulescu2012reduction}).
\end{remark}

\begin{remark}
  Following Remark \ref{rmk:parametricvsuncond}, in this paper, we use
  the following convention. An \textbf{unconditional conservation law}
  is a function $\phi(\vx)$ that does not depend on $\vk$ and satisfies
  the conditions of Definition \ref{def:exactandapproxcons}, while a
  \textbf{parametric conservation law} is a function $\phi(\vk,\vx)$
  that does depend on $\vk$. 
\end{remark}

\section{Independence and Completeness}\label{sec:complete}

Some variables $x_i$ may not appear in a conservation law 
$\phi(\vk,\vx)$. This means, in the case of a linear conservation law, that some coefficients $c_i$ may be zero or, in the case of a nonlinear conservation law, that some partial derivatives $\DP{\phi}{x_i}(\vk,\vx)$ may vanish. If $r$ is the number of all non-zero quantities $\DP{\phi}{x_i}(\vk,\vx)$, then we say that the conservation law {\bf depends on $r$ variables}.

\begin{definition}
\label{def:irreducibleConsLaw}
An exact or approximate conservation law depending on 
$r$ variables
is called {\bf simple} 
if it can't be split 
into the sum or the product of two conservation laws 
such that at least one of them depends on 
a number of variables $r'$ with $1 \leq r' < r$.
\end{definition}

\begin{definition}
\label{def:degenerateSteadyStates}
For $\vk \in \RR^r_+$  a steady state $\vx$ is a positive solution of $\vect{F}(\vk,\vx)=0$.
We denote 
the {\bf steady state variety} by $\mathcal{S}_{\vk}$. A steady state is 
{\bf degenerate} or {\bf non-degenerate}
if the Jacobian $D_{\vx} \vect{F}(\vk,\vx)$ is singular or regular, respectively. 
\end{definition}

Degeneracy of steady states implies that
$\mathcal{S}_{\vk}$ is not discrete. Reciprocally, we have the following result.
\begin{theorem}
Assume that for  $\vk \in \RR^r_+$, $\mathcal{S}_{\vk}$ is a manifold.
  If the local dimension at a point $\vx_0 \in \mathcal{S}_{\vk}$ is strictly positive, then $\vx_0$ is degenerate.  
\end{theorem}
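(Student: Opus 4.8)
The plan is to exploit the defining relation $\mathcal{S}_{\vk} = \{\vx \in \RR^n_+ : \vF(\vk,\vx) = 0\}$ and to show that the tangent space of the manifold at $\vx_0$ is contained in the kernel of the Jacobian $D_{\vx}\vF(\vk,\vx_0)$. Once this inclusion is established, positivity of the local dimension makes that tangent space nontrivial, which forces the kernel of the $n\times n$ matrix $D_{\vx}\vF(\vk,\vx_0)$ to be nontrivial, hence the matrix to be singular. By Definition~\ref{def:degenerateSteadyStates} this is exactly what it means for $\vx_0$ to be degenerate.

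First I would fix the local dimension $d := \dim_{\vx_0}\mathcal{S}_{\vk} \ge 1$. Since $\mathcal{S}_{\vk}$ is a manifold near $\vx_0$ and $\RR^n_+$ is open in $\RR^n$, the set $\mathcal{S}_{\vk}$ is a $d$-dimensional submanifold of $\RR^n$ in a neighborhood of $\vx_0$, carrying a well-defined $d$-dimensional tangent space $T_{\vx_0}\mathcal{S}_{\vk}$. Because $d \ge 1$, I may select a nonzero vector $\vv \in T_{\vx_0}\mathcal{S}_{\vk}$ and realize it by a $C^1$ curve $\gamma \colon (-\varepsilon,\varepsilon) \to \mathcal{S}_{\vk}$ with $\gamma(0) = \vx_0$ and $\gamma'(0) = \vv$.

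Next, since $\gamma(t) \in \mathcal{S}_{\vk}$ for every $t$, the composition satisfies $\vF(\vk,\gamma(t)) = 0$ identically in $t$. Differentiating with respect to $t$ at $t = 0$ and applying the chain rule yields $D_{\vx}\vF(\vk,\vx_0)\,\vv = 0$. Thus $\vv$ is a nonzero element of $\ker D_{\vx}\vF(\vk,\vx_0)$, so the Jacobian has nontrivial kernel and is therefore singular, and $\vx_0$ is degenerate as claimed.

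The one point that needs care — and the main obstacle — is the passage from \emph{$\mathcal{S}_{\vk}$ is a manifold of positive local dimension} to \emph{there is a nonconstant $C^1$ arc through $\vx_0$ inside $\mathcal{S}_{\vk}$}. This requires at least $C^1$ regularity of the manifold structure, so that tangent vectors are represented by differentiable curves; for a smooth manifold (and in particular in the real-analytic algebraic setting considered here) this is immediate. If one only assumed $\mathcal{S}_{\vk}$ to be a topological manifold, the argument would have to be supplemented, for instance by observing that $\vF(\vk,\cdot)^{-1}(0)$ is a semialgebraic set whose smooth locus already carries the differentiable structure used above.
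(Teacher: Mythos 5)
Your argument is correct and is essentially the paper's own proof: both pick a differentiable curve through $\vx_0$ inside $\mathcal{S}_{\vk}$, differentiate $\vF(\vk,\gamma(t))=0$ at $t=0$, and conclude that the Jacobian annihilates the tangent vector and is therefore singular. You are in fact slightly more careful than the paper in insisting that the tangent vector be nonzero (so that the kernel is genuinely nontrivial) and in flagging the $C^1$ regularity needed to realize tangent vectors by curves.
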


\begin{proof}
There is a neighborhood $U \subset \RR$
of zero and a smooth function $U \rightarrow \mathcal{S}_{\vk}$, $\alpha \mapsto \vx(\alpha)$ with $\vx(0)=\vx_0$.
By differentiating $\vect{F}(\vk,\vx(\alpha))=0$ with respect to $\alpha$ at $0$ we get $D_{\vx} \vect{F}(\vk,\vx_0) \D{\vx(0)}{\alpha}=0$, and thus 
$D_{\vx} \vect{F}(\vk,\vx_0)$ is singular. 
\end{proof}

\begin{definition}
\label{def:completeConsLaw}
A set of $s$ exact
conservation laws
defined by the
vector $$\vect{\Phi}(\vk,\vx)=\transpose{(\phi_1(\vk,\vx),\ldots,\phi_s(\vk,\vx))}$$  is called {\bf complete} if
the Jacobian matrix 
$$\vect{J}_{\vect{F},\vect{\phi}}(\vk,\vx)=D_{\vx}\transpose{(\vect{F}(\vk,\vx), \vect{\Phi}(\vk,\vx) )}$$ 
has rank $n$ for any $\vk \in \RR_+^r,\vx\in \RR_+^n$. 
The set is called {\bf independent} if the Jacobian matrix of  
$ \transpose{\vect{\Phi}(\vk,\vx)}$ with respect to $\vx$
has rank $s$ for any $\vk \in \RR_+^r,\vx\in \RR_+^n$.  
In the case of a
set of approximate conservation laws defined by a vector
$\vect{\Phi}(\vk,\vx)$, completeness is defined with $\vect{F}(\vk,\vx )$
replaced by $\vect{F}^{(1)}(\vk,\vx)$.
\end{definition}

\begin{proposition}
If a system has a complete set of conservation laws, 
then the intersection of $\mathcal{S}_{\vk}$ with 
$\{\vect{x} \mid \vect{\Phi}(\vk,\vx)=\vect{c}_0 \} \cap \RR_+^n$, where $\vect{c}_0\in \RR^r$, is finite. 
\end{proposition}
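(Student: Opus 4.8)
The plan is to fix a parameter value $\vk \in \RR_+^r$ and a constant vector $\vect{c}_0$, and to analyze the set
$$Z = \mathcal{S}_{\vk} \cap \{\vx \mid \vect{\Phi}(\vk,\vx) = \vect{c}_0\} \cap \RR_+^n,$$
which consists of exactly those $\vx \in \RR_+^n$ satisfying the $n+s$ scalar equations $\vect{F}(\vk,\vx)=0$ and $\vect{\Phi}(\vk,\vx)-\vect{c}_0=0$. I would view these equations as a single map $G\colon \RR^n \to \RR^{n+s}$, $G(\vx) = \transpose{(\vect{F}(\vk,\vx),\, \vect{\Phi}(\vk,\vx)-\vect{c}_0)}$, and observe that its Jacobian with respect to $\vx$ is precisely $\vect{J}_{\vect{F},\vect{\phi}}(\vk,\vx)$. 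By the completeness hypothesis this $(n+s)\times n$ matrix has rank $n$ at every point of $\RR_+^n$, in particular at every point of $Z$; equivalently, $G$ has injective differential (full column rank) throughout $Z$.

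The first key step is to deduce that every point of $Z$ is isolated. At a given $\vx_0 \in Z$, since $\vect{J}_{\vect{F},\vect{\phi}}(\vk,\vx_0)$ has rank $n$, I can select $n$ of its rows that are linearly independent; these correspond to $n$ among the $n+s$ defining equations, whose $n\times n$ Jacobian is invertible at $\vx_0$. The inverse function theorem then shows that this subsystem of $n$ equations has $\vx_0$ as its only solution in some neighborhood of $\vx_0$. Since $Z$ is contained in the solution set of that subsystem, $\vx_0$ is isolated in $Z$. As $\vx_0 \in Z$ was arbitrary, $Z$ is discrete.

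Discreteness alone is insufficient, and bridging from discrete to finite is the step that needs the most care: a discrete subset of $\RR^n$ may well be infinite, and $Z$ need not be bounded. The plan is to exploit that $Z$ is a semi-algebraic set. Indeed, $Z$ is cut out by the polynomial equalities $\vect{F}(\vk,\vx)=0$ and $\vect{\Phi}(\vk,\vx)=\vect{c}_0$ together with the inequalities defining $\RR_+^n$; when the $\phi_i$ are rational monomials, clearing the (strictly positive) denominators on $\RR_+^n$ keeps the description polynomial, so $Z$ remains semi-algebraic. A semi-algebraic set admits a finite cell decomposition, and any cell of positive dimension would contain a non-isolated point of $Z$. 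Since $Z$ is discrete, every cell must be a single point and there are finitely many of them, whence $Z$ is finite, as claimed. I would flag this last paragraph as the main obstacle: the inverse-function-theorem argument yields only local isolation, and the passage to a finite cardinality genuinely relies on the semi-algebraic (equivalently, algebraic-over-$\RR$) nature of the constraints rather than on any compactness of $Z$.
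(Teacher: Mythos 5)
Your proposal is correct and follows essentially the same route as the paper: use the rank-$n$ hypothesis to extract an invertible $n\times n$ subsystem and conclude via the inverse/implicit function theorem that every point of the intersection is isolated, then pass from discreteness to finiteness using the algebraic nature of the defining equations. The only difference is one of detail: the paper compresses the last step into the single clause ``as $\vect{F}(\vk,\vx)$ is polynomial, the intersection is finite,'' whereas you correctly spell out that this relies on the intersection being a discrete semi-algebraic set, which has finitely many cells.
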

\begin{proof}
Suppose that the intersection is non-empty and contains $\vx$.
Because the rank of $\vect{J}_{\vect{F},\vect{\phi}}(\vk,\vx)$ in $\vx$ is $n$, the implicit function theorem implies
that, for $\vk$ and $\vect{c}_0$, $\vx$ is isolated from
other solutions of $\vect{F}(\vk,\vx)=0$, $\vect{\Phi}(\vk,\vx)=\vect{c}_0$. As
$\vect{F}(\vk,\vx)$ is polynomial, the intersection is finite.
\end{proof}

\begin{remark}
When $\vect{\Phi}(\vk,\vx)$
is linear in $\vx$ and results from a stoichiometric matrix,
the set $$\mathcal{C}_{\vect{c}_0} = \{ \vx \mid \vect{\Phi}(\vk,\vx)=\vect{c}_0 \} \cap \RR_+^n$$ is called {\bf stoichiometric compatibility class} or {\bf reaction simplex} \cite{wei1962structure,feinberg1974dynamics}. 
Stoichiometric compatibility classes of systems with complete sets of linear conservation laws contain a finite number of steady states. 
We note that some authors call completeness of linear conservation laws  {\bf non-degeneracy} \cite{feliu2012preclusion}. 
Here, we extend these definitions
and call {\bf compatibility classes}
all sets of the type $\mathcal{C}_{\vect{c}_0}$, even if 
$\vect{\Phi}(\vk,\vx)$  is a non-linear conservation law. 
Compatibility classes of polynomial conservation laws
are semi-algebraic sets. 

\cor{Compatibility classes and the completeness of conservation laws have applications to the study of multistationarity of models \cite{feliu2012preclusion} 
and to model order reduction \cite{desoeuvres2022a} (see also Section~\ref{sec:motivatingexample} and Remark~\ref{rem:reduction}). 
}
\end{remark}

\begin{remark}
Since our concern is the number of
positive solutions in $\vx$  of $\vect{F}(\vk,\vx)=0$, $\vect{\Phi}(\vk,\vx)=\vect{c}_0$, it would be
more natural to consider in Definition~\ref{def:completeConsLaw} the rank of  
$\vect{J}_{\vect{F},\vect{\phi}}(\vk,\vx)$
on $$\mathcal{S}_{\vk} \cap \{ \vx \mid  \vect{\Phi}(\vk,\vx)=\vect{c}_0 \} \cap \RR_+^n.$$
In fact, as this rank does not depend on $\vect{c}_0$, it is simpler and equivalent to impose its value
on $\mathcal{S}_{\vk} \cap \RR_+^n$.
\end{remark}

\begin{remark}
The  independent linear conservation laws 
$$\vect{\Phi}(\vk,\vx) =(x_1+x_2,x_2+x_3) $$  of \cor{the truncated system in the}
Example~\ref{MM} are complete. More precisely, the Jacobian of $$\transpose{(\vect{F}(\vk,\vx),\vect{\Phi}(\vk,\vx))},$$ where $\vect{F}(\vk,\vx)$ is the vector of right hand sides of \eqref{eqsMMtruncated},
has a $3 \times 3$ minor 
$$M=\mathrm{det}(D_{\vx} \transpose{(-k_1x_1x_3+k_2x_2,\vect{\Phi}(\vk,\vx))}) = - k_2 - k_1 x_1 - k_1 x_3.$$ 
This minor can not be zero for positive $\vx$, $\vk$ on the steady state variety which is defined by the single equation
$-k_1 x_1 x_3 + k_2 x_2 =0$. Therefore the rank 
of $\vect{J}_{\vect{F},\vect{\phi}}(\vx,\vk)$ is $3$. 

Furthermore, all stoichiometric compatibility classes
defined by $x_1+x_2=c_{01}$, $x_2+x_3=c_{02}$, $\vx > 0$
contain a unique \cor{positive} steady state 
\begin{eqnarray*}
x_1 &=&  \big(k_1(c_{01} - c_{02}) - k_2 + \sqrt{\Delta} \big)/(2 k_1), \\
x_2 &=&  \big(k_1(c_{01} + c_{02}) + k_2 - \sqrt{\Delta}\big)/(2 k_1), \\
x_3 &=&  \big(k_1(c_{02} - c_{01}) - k_2 + \sqrt{\Delta}\big)/(2 k_1),
\end{eqnarray*}
where
$\Delta = (c_{01}-c_{02})^2k_1^2+k_2^2 + 2 k_1k_2(c_{01}+c_{02})$.
\cor{This non-degenerate steady state is precisely the one used for the quasi-steady reduction of the transformed model (see also the Remark~\ref{rem:reduction}).}
\end{remark}

The notions of completeness and independence in
Definition~\ref{def:completeConsLaw} are effective. Indeed, Algorithms~\ref{alg:iscomplete} and~\ref{alg:isindependent} below check for completeness and independence, respectively. Both algorithms use a parametric rank computation (see in both cases line 1 with the call "ParametricRank"), which we are going to present first.

Let $P$, $N$, $M \in \NN$ and let $\RR[\vv]$ be a polynomial ring in 
$P$ indeterminates $\vv=(v_1,\dots,v_P)$. Furthermore, let $A \in \RR[\vv]^{M\times N}$ be  
a matrix whose entries are polynomials in $\vv$. We are interested in the rank of $A$ 
in dependence of different values $\bar \vv \in \RR^P$ for the variables $\vv$. 
Algorithm~\ref{alg:pgauss} (ParametricRank) provides a decomposition of the affine space $\RR^P$ into disjoint semialgebraic subsets 
$$S_1,\dots,S_I$$
such that for each subset $S_i$ the rank of $A(\bar \vv)$ is constant for all points $\bar \vv$ of $S_i$.
The output of the algorithm is organized as a set of pairs 
$$R=\{ (\Gamma_1, r_1), \, \dots, \, (\Gamma_I, r_I) \}.$$ For each
$i=1,\dots,I$, $\Gamma_i$ is a conjunction of polynomial constraints in $\vv$, i.e.~polynomial equations 
and inequations in $\vv$ with coefficients in $\RR$, and $r_i$ is an integer. The semialgebraic sets $S_1,\dots,S_I$ from above are defined by $\Gamma_1,\dots,\Gamma_I$  and $r_1,\dots,r_I$ are the ranks of 
$A$ on these sets. 

In order to determine the different ranks the algorithm computes row echelon forms by introducing case distinctions. Roughly speaking the algorithm works in the following way. For a semialgebraic set $S$ defined by a conjunction of polynomial constraints $\Gamma$ and for a matrix $A$ which is in row echelon form up to the $p$-th row, the algorithm checks if there is an entry $A_{ij}$ in $A$, where $p+1\leq i\leq M$ and $p+1\leq j \leq N$, which does not vanish for any point of $S$.
If $A_{mn}$ is such an entry, then one swaps rows $p+1$ with $m$ and columns $p+1$ with $n$. One uses then the entry $A_{p+1,p+1}$ in the new matrix as a pivot element to delete all subsequent entries and introduces a new case consisting of the same set $S$, i.e.~the same $\Gamma$, but now with the matrix in row echelon form up to row $p+1$. If there is not such an entry, the algorithm checks if there is an entry $A_{ij}$, where $i$, $j$ runs as above, which does not vanish for all points of $S$. If $A_{mn}$ is such an entry, the algorithm introduces two new cases. Both cases consist of the old matrix $A$ in row echelon form up to row $p$. The difference is now that one case consists of the semialgebraic set defined by $\Gamma \land A_{mn}\neq0$ and the other one consists of the set defined by $\Gamma \land A_{mn}=0$.
The new conditions guarantee the non-vanishing and the vanishing of the entry $A_{mn}$ in the respective case. 
If there is not an entry $A_{mn}$ as described above, on $S$ the matrix $A$ is in row echelon form and has rank $p$. The algorithm returns $\Gamma$ and $p$.
The algorithm processes a stack of cases in the way described above starting with the case which consists of the semialgebraic set $\RR^P$ defined by $\Gamma="\true"$ and the matrix $A$ in row echelon form up to row $p=0$. 

Technically, the algorithm uses a logical deduction procedure $\vdash$ to heuristically derive from
$\Gamma$ whether or not relevant entries vanish in $\RR$. The
correctness of the algorithm requires only two very natural assumptions on
$\vdash$:
\begin{enumerate}
\item[(i)] If $\Gamma \vdash \gamma$, then $\Gamma$ entails $\gamma$ in $\RR$, i.e., $\vdash$ is sound;
\item[(ii)] ${\gamma \mathrel{\land} \Gamma} \vdash \gamma$, i.e., $\vdash$ can derive
  constraints $\gamma$ that literally occur on the left hand side.
\end{enumerate}
Of course, our notation in (ii) should be read modulo associativity and
commutativity of the logical conjunction operator.  Notice that (ii) is easy to
implement, and implementing only (ii) is certainly sound.

\begin{example}
We consider the $3\times 3$-matrix 
$$A^{(0)}=\begin{pmatrix}
0 & v_1 & 1 \\ v_1 & v_3^2-1+v_1 v_2 & v_3 \\ v_3-1 & v_3+1 + v_1v_3^3 & v_3^3
\end{pmatrix} $$
with polynomial entries in $\RR[\vv]$ with $\vv=(v_1,v_2,v_3)$. Starting with $(\true, A^{(0)}, 0)$ we find in $A^{(0)}$ the entry $A^{(0)}_{1,3}=1$ which 
clearly does not vanish on $\RR^3$. We swap the first and third column and use $1$ as a pivot to delete the entries $v_3$ and 
$v_3^3$. We obtain 
$$A^{(1)}=\begin{pmatrix}
1 & v_1 & 0 \\ 0 & v_3^2-1 & v_1 \\ 0 & v_3+1  & v_3-1
\end{pmatrix}$$
and introduce $(\true,A^{(1)},1)$.
For each entry $A^{(1)}_{i,j}$ with $2\leq i,j\leq3$ there is at least one point in $\RR^3$ such that $A^{(1)}_{i,j}$ vanishes.
On the other hand for each entry $A^{(1)}_{i,j}$ with $2\leq i,j\leq3$ there is also at least one point in $\RR^3$ such that  $A^{(1)}_{i,j}$ does not vanish. We choose $A^{(1)}_{2,2}=v_3^2-1$  and introduce the cases $(v_3^2-1\neq 0,A^{(1)},1)$ and  $(v_3^2-1=0,A^{(1)},1)$.
Continuing with the first case, we trivially find that the entry $A^{(1)}_{2,2}=v_3^2-1$ does not vanish on the semialgebraic set $\{\boldsymbol{\overline{v}} \in \RR^3 \mid \overline{v}_3^2-1\neq 0\}$ and we use it as a pivot to delete 
the entry $v_3+1$. We obtain 
$$A^{(2)}=\begin{pmatrix}
1 & v_1 & 0 \\ 0 & v_3^2-1 & v_1 \\ 0 & 0  & (v_3-1)^2-v_1
\end{pmatrix} $$
and introduce $(v_3^2-1\neq0,A^{(2)},2)$. Since the set $\{\boldsymbol{\overline{v}} \in \RR^3 \mid \overline{v}_3^2-1\neq 0\}$ contains points where 
$A^{(2)}_{3,3}$ vanishes and points where $A^{(2)}_{3,3}$ does not vanish, we introduce the two cases
\begin{eqnarray*}
&&(v_3^2-1\neq0 \, \land \, (v_3-1)^2\neq v_1 ,A^{(2)},2),  \\
&& (v_3^2-1\neq0 \, \land \, (v_3-1)^2= v_1 ,A^{(2)},2).
\end{eqnarray*}
In the first case, the entry $A^{(2)}_{3,3}$ clearly does not vanish on the semialgebraic set
$$ \{ \boldsymbol{\overline{v}} \in \RR^3 \mid \overline{v}_3^2-1\neq0 \, \land \, (\overline{v}_3-1)^2\neq \overline{v}_1 \}.$$
Since there is no entry left to delete with the pivot  $A^{(2)}_{3,3}$, we simply introduce the new case 
$$(v_3^2-1\neq0 \, \land \, (v_3-1)^2\neq v_1 ,A^{(2)},3) ,$$
which yields the output $(v_3^2-1\neq0 \, \land \, (v_3-1)^2\neq v_1,3)$. In the second case we have that the entry $A^{(2)}_{3,3}$ is zero on the constructed semialgebraic set and so 
the algorithm returns $(v_3^2-1\neq0 \, \land \, (v_3-1)^2= v_1,2)$. Finally, we are coming back to the case $(v_3^2-1=0,A^{(1)},1)$. On the semialgebraic set $\{\boldsymbol{\overline{v}} \in \RR^3 \mid \overline{v}_3^2-1 = 0\}$ the entries $v_3^2-1$, $v_3+1$ and $v_3-1$ are 
zero while there are points in this set where the entry $A^{(1)}_{2,3}=v_1$ vanishes and points where it does not vanish. We introduce the cases
$(v_3^2-1=0\, \land \, v_1\neq 0,A^{(1)},1)$ and $(v_3^2-1=0 \, \land \, v_1=0,A^{(1)},1)$. In the first case the entry $A^{(1)}_{2,3}=v_1$ is nonzero on the semialgebraic set
$$\{ \boldsymbol{\overline{v}} \in \RR^3 \mid \overline{v}_3^2-1=0\, \land \, \overline{v}_1\neq 0 \} $$
and so we obtain $(v_3^2-1=0\, \land \, v_1\neq 0,A^{(1)},2)$, which leads to the output 
$(v_3^2-1=0\, \land \, v_1\neq 0,2)$. In the second case all entries $A^{(1)}_{i,j}$ with $2\leq i,j\leq 3$ are zero on the corresponding semialgebraic set and therefore the algorithm returns $(v_3^2-1=0 \, \land \, v_1=0,1)$.
\end{example}

\cor{\begin{remark}
Similar approaches \cite{Sit-1992} do not necessarily yield disjoint parameter ranges $S_i$ (see \cite{BallarinKauers:04a}, Section 5.3).
\end{remark}}

Let us get back to checking for completeness and independence.
We start with Algorithm~\ref{alg:iscomplete} below which handles completeness.

\begin{algorithm}[H]
  \caption{ParametricRank}\label{alg:pgauss}
  \begin{algorithmic}[1]
    \REQUIRE An $M \times N$-matrix $A(\boldsymbol{v})$ with polynomial entries in
    parameters $\boldsymbol{v}$.
    
    \ENSURE A set of pairs
    $\{(\Gamma_1(\boldsymbol{v}), r_1), \dots, (\Gamma_I(\boldsymbol{v}), r_I) \}$, where each
    $\Gamma_{i}(\boldsymbol{v})$ is a conjunction of polynomial equations and
    inequations and $r_i \in \{1, \dots, N\}$. For any real choice
    $\bar{\boldsymbol{v}}$ of parameters $\boldsymbol{v}$ there is one and only
    one $i \in \{1, \dots, I\}$ such that
    $\Gamma_{i}(\bar{\boldsymbol{v}})$ holds in $\RR$.
    For this $i$ we have
    $\rank A(\bar{\boldsymbol{v}}) = r_{i}$.
    \STATE $I:=0$
    \STATE create an empty stack
    \STATE
    $\operatorname{push}~(\true, A, 0)$
    \WHILE{stack is not empty}
    \STATE $(\Gamma, A, p):=\operatorname{pop}$
    \IF{$\Gamma\nvdash \false$}
    \IF{there is $m\in\{p+1,\dots,M\}$, $n\in\{p+1,\dots,N\}$ s.t.~$\Gamma\vdash A_{mn}\neq0$}
    \STATE in $A$, swap rows $p+1$ with $m$ and columns $p+1$ with $n$
    \STATE in $A$, use row $p+1$ to obtain $A_{p+2,p+1}= \dots = A_{M,p+1}=0$
    \STATE $\operatorname{push}~(\Gamma, A, p+1)$
    \ELSIF{there is $m\in\{p+1,\dots,M\}$, $n\in\{p+1,\dots,N\}$ s.t.~$\Gamma\nvdash A_{mn}=0$}
    \STATE $a := A_{mn}$
    \STATE $\operatorname{push}~(\Gamma\land a\neq0, A, p)$
    \STATE in $A$, set $A_{mn}:=0$\hfill \textit{optional optimisation}
    \STATE $\operatorname{push}~(\Gamma\land a=0, A, p)$
    \ELSE[$A$ is in row echelon form modulo $\Gamma$]
    \STATE
    $I:=I+1$
    \STATE $(\Gamma_I, r_I) := (\Gamma, p)$
    \ENDIF
    \ENDIF
    \ENDWHILE
    \RETURN $\{(\Gamma_1,r_1), \dots, (\Gamma_I,r_I)\}$
  \end{algorithmic}
\end{algorithm}
\begin{algorithm}[H]
  \begin{algorithmic}[1]
    \caption{\label{alg:iscomplete}$\operatorname{IsComplete}$.  }

    \REQUIRE
    1.~$\vect{F}(\vk, \vx) = (f_1(\vk, \vx)), \dots, f_n(\vk, \vx))$;
    2.~$\vect{\Phi}(\vk,\vx) = (\phi_1 (\vk,\vx), \dots \phi_s (\vk,\vx))$;
    3.~$\vk = (k_1, \dots, k_r)$;
    4.~$\vx = (x_1, \dots, x_n)$
    
    \ENSURE ``yes'' if $\Phi$ is complete according to
    Definition~\ref{def:completeConsLaw}, ``no'' otherwise
    
    \STATE $R :=
    \operatorname{ParametricRank}\bigl(
    \vect{J}_{\vect{F},\vect{\phi}}(\vk,\vx)\bigr)$ 
    \STATE $R_n := \{\,(\Gamma, r) \in R \mid r=n\,\}$
    \STATE $\varrho_n := \bigvee_{(\Gamma,n) \in R_n} \Gamma$
    \STATE $\gamma := \forall \, \vk \, \forall \, \vx \, (
    \vk > 0 \, \land \, \vx > 0 \, \land \, \vect{F}(\vk,\vx) = 0 \longrightarrow \varrho_n)$
    \IF{$\RR \models \gamma$}
    \RETURN ``yes''
    \ELSE
    \RETURN ``no''
    \ENDIF
  \end{algorithmic}
\end{algorithm}
\begin{algorithm}[H]
  \begin{algorithmic}[1]
    \caption{\label{alg:isindependent}$\operatorname{IsIndependent}$}

    \REQUIRE
    1.~$\vect{F}(\vk, \vx) = (f_1(\vk, \vx)), \dots, f_n(\vk, \vx))$;
    2.~$\vect{\Phi}(\vk,\vx) = (\phi_1 (\vk,\vx), \dots \phi_s (\vk,\vx))$;
    3.~$\vk = (k_1, \dots, k_r)$;
    4.~$\vx = (x_1, \dots, x_n)$
    
    \ENSURE ``yes'' if $\Phi$ is independent according to
    Definition~\ref{def:completeConsLaw}, ``no'' otherwise
    
    \STATE $R := \operatorname{ParametricRank}(D_{\vect{x}}\vect{\Phi}(\vk,\vx)^T)$
    \STATE $R_s := \{\, (\Gamma, r) \in R \mid r=s\,\}$
    \STATE $\varrho_s := \bigvee_{(\Gamma,s) \in R_s} \Gamma$
    \STATE $\iota := \forall \, \vect{k} \, \forall \, \vect{x} \, (
    \vk > 0 \, \land \, \vx > 0 \, \land \, \vect{F}(\vk,\vx) = 0 \longrightarrow \varrho_s)$
    \IF{$\RR \models \iota$}
    \RETURN ``yes''
    \ELSE
    \RETURN ``no''
    \ENDIF
  \end{algorithmic}
\end{algorithm}

%
On the grounds of Algorithm~\ref{alg:pgauss}, we construct in line 2--3 in Algorithm~\ref{alg:iscomplete} an equivalent logic condition
$\varrho_n$ for $\rank \vect{J}_{\vect{F},\vect{\phi}}(\vk,\vx) = n$. In line 4 we construct
$\gamma$ as a direct formalization of the definition of completeness and in line 5 we
finally test validity of $\gamma$ over the reals. Technically we use a combination of
various effective quantifier elimination procedures for the theory of real
closed fields \cite[and the references
there]{DolzmannSturm:98a,Sturm:17a,Sturm:18a} combined with heuristic
simplification techniques \cite{DolzmannSturm:97c} in the Redlog system
\cite{dolzmann1997redlog,Kosta:16a,Seidl:06a}. Alternatively, one could use
Satisfiability Modulo Theories solving over the logic $\texttt{QF\_NRA}$
\cite{NieuwenhuisOliveras:06b,BarrettFontaine:17a,AbrahamAbbott:2016b}.
Algorithm~\ref{alg:isindependent} proceeds analogously to
Algorithm~\ref{alg:iscomplete} and tests for independence.

\begin{example}\label{ex:cointr}
  We automatically process Example~\ref{MM} with our
  Algorithm~\ref{alg:iscomplete} 
  
  $\operatorname{IsComplete}$. Our input is
  \begin{eqnarray*}
    \vect{F}(\vk,\vx)&=&( -k_{1} x_{1} x_{3} + k_{2} x_{2},
    k_{1} x_{1} x_{3} - k_{2} x_{2},
    -k_{1} x_{1} x_{3} + k_{2} x_{2}), \\ 
    \vect{\Phi}(\vk,\vx)&=&(x_1+x_2, x_2+x_3)
  \end{eqnarray*}
  and $\vk=(k_1, k_2)$, $\vx=(x_1, x_2, x_3)$. We obtain the parametric Jacobian
  \begin{displaymath}
    \vect{J}_{\vect{F},\vect{\phi}}(\vk,\vx) =
    \begin{pmatrix}
      -k_1x_3 & k_2 & -k_1x_1\\
      k_1x_3  & -k_2 & k_1x_1\\
      -k_1x_3 & k_2 & -k_2x_1\\
      1 & 1& 0\\
      0 & 1 & 1
    \end{pmatrix}
  \end{displaymath}
  and compute in line 1 the parametric rank
  \begin{displaymath}
    R=
    \{( k_{1} x_{1} + k_{1} x_{3} + k_{2} \neq 0,3),
    ( k_{1} x_{1} + k_{1} x_{3} + k_{2} = 0,2)\},
  \end{displaymath}
  from which we select $R_3=\{( k_{1} x_{1} + k_{1} x_{3} + k_{2} \neq 0,3)\}$ in
  line 2 and $$\varrho_3=(k_{1} x_{1} + k_{1} x_{3} + k_{2} \neq 0)$$ in line 3. Completeness is
  straightforwardly formalized in line 4 by
  \begin{displaymath}
    \gamma = \forall \, \vk \, \forall \, \vx \, (\vk>0 \, \land \, \vx >0 \, \land \,
    k_{1} x_{1} x_{3} - k_{2} x_{2} = 0  \longrightarrow
    k_{1} x_{1} + k_{1} x_{3} + k_{2} \neq  0),
  \end{displaymath}
  where some redundant equations are automatically removed from
  $\vect{F}(\vk,\vx)=0$ via the standard simplifier described in
  \cite{DolzmannSturm:97c}. Real quantifier elimination on $\gamma$ in line 5
  equivalently yields ``$\true$'', which confirms completeness, and we return
  ``yes'' in line 6.

  Similarly, Algorithm~\ref{alg:isindependent}
  $\operatorname{IsIndependent}$ computes
  \begin{displaymath}
   \transpose{ D_{\vect{x}}\vect{\Phi}(\vk,\vx)} = \begin{pmatrix} 1&1&0\\ 0&1&1 \end{pmatrix},
  \end{displaymath}
  $R=\{(\true, 2)\}$, $R_2=R$ and $\varrho_2=\true$.
  This yields
  $$\iota = \forall \, \vk \, \forall \, \vx \, (\vk>0 \, \land \, \vx >0 \, \land \, k_{1} x_{1} x_{3} - k_{2} x_{2} = 0 \longrightarrow
  \true),$$ which by quantifier elimination is equivalent to ``$\true$'', and we
  return ``yes''. Recall that any implication with ``$\true$'' on the right hand
  side holds already due to Boolean logic \cite{SeidlSturm:03c}. In our
  framework here, this corresponds to the observation that completeness and
  independence hold whenever we encounter full rank of the corresponding
  Jacobian for all choices of parameters.
  The automatic computations described here take less than 0.01~sec altogether.
\end{example}


The next example shows that \cor{there are models with degenerate steady states and no complete set of conservation laws.}

\begin{example}\label{example2}
One checks easily that the system 
$$\dot x_1 = 1 - x_1 - x_2, \quad \dot x_2 = x_1 + x_2 -1$$
has the linear conservation law $\Phi(\vk,\vx)= x_1 + x_2$ and that 
the Jacobian of $\transpose{(\vect{F}(\vk,\vx),$ $\Phi(\vk,\vx) )}$ is constant and  
has rank $1$. Trivially the Jacobian has everywhere rank $1$ and so $\Phi(\vk,\vx)$ is not complete. 
The explicit solutions of the ODE system 
are 
\begin{eqnarray*}
x_1(t) &=& (1-c_0)t + c_1, \\ x_2(t)&=&(c_0-1)t + c_0 - c_1.
\end{eqnarray*}
Thus, all invariant curves are of the 
form $x_1+x_2=c_0$. We conclude that there are no further first integrals and so the system has no 
complete set of conservation laws. 
We can also note that the intersection of the 
steady state variety defined by $x_1 + x_2 = 1$ with a stoichiometric
compatibility class $\{ x_1+x_2 = c_0, x_1 > 0, x_2 > 0  \}$
is either empty or a line segment.

In the absence of the parameters $\vk$ Algorithm \ref{alg:iscomplete}
$\operatorname{IsComplete}$ computes the parametric rank $R = \{(\true, 1)\}$.
However, we have $n=2$, which yields $R_2 = \emptyset$ and $\varrho_2 = \false$. This gives us
$$\gamma = \forall \, \vk \, \forall \, \vx \, (\true \, \mathrel{\land} \, \vx > 0 \, \land \, x_1 + x_2 - 1 = 0 \longrightarrow \false),$$
which quantifier elimination identifies to be ``$\false$'', and the algorithm
returns ``no''. Notice that $\gamma$ is equivalent to
$$\forall \, \vx \, (\vx > 0 \longrightarrow x_1 + x_2 - 1 \neq 0),$$ which illustrates that with deficient
rank, completeness can only hold formally when there is no steady state in the
positive orthant.
\end{example}

\section{Computing Linear Conservation Laws}\label{sec:linear}

\cor{Throughout this section we consider that the polynomial ODE system \eqref{eq:fi} has the structure given by \eqref{eq:crn}. If the model is a CRN this is automatically true, otherwise we can always bring the system to this form by using the Algorithms~\ref{alg:S},\ref{alg:findz}. }






By definition a vector $\vect{c} = (c_1,\ldots,c_n) \in \RR^n$ defines an exact linear conservation law $$\Phi(\vk,\vx) =\sum_{i=1}^n c_i x_i$$ 
of the system in  \eqref{eq:fi} \cor{with velocity field satisfying \eqref{eq:crn}}, if
$$\sum_{i=1}^n c_i f_i(\vk,\vx)
=\sum_{j=1}^r  (\vect{c}\vect{S})_j k_j  \vx^{\vect {\alpha}_j} = 0$$ for all $\vk$, $\vx$.
Exact linear conservation laws of the system in \eqref{eq:fi} can be obtained from the matrix $\vect{S}$ as the following theorem shows. 

\begin{theorem} \label{th:linear_conservation}
Assume that all monomial reaction rates
with the same multi-index have different 
rate constants, i.e. 
$\vect{\alpha}_j = \vect{\alpha}_{j'}$ with $j \neq j'$
implies $k_j  \neq k_{j'}$. This condition is always fulfilled in our setting (see the Remark~\ref{rem:remark2}). 
Let $\myrank{\vect{S}}$ be the rank of the matrix $\vect{S}$. 
Then there is 
a full rank matrix $\vect{C}$ with $n-\myrank{\vect{S}}$ rows such that $\vect{C} \vect{S} = 0$. Furthermore, 
$\vect{C}$ can be chosen such that its rows 
form a  set of independent, simple, exact linear conservation laws unconditional on $\vk$. 
Finally, all $\vect{c}=(c_1,\ldots,c_n)$ defining unconditional, exact 
linear conservation laws are linear combinations
of the rows of $\vect{C}$. 
\end{theorem}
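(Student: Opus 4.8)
The plan is to identify the unconditional exact linear conservation laws with the left kernel of $\vect{S}$ and then read off every assertion from elementary linear algebra, the only genuinely combinatorial point being the word \emph{simple}. First I would turn the defining identity into an algebraic one. By hypothesis (guaranteed in our setting by Remark~\ref{rem:remark2}) the $r$ monomials $k_j \vx^{\vect{\alpha}_j}$ are pairwise distinct as monomials in $\RR[\vk,\vx]$, hence linearly independent over $\RR$. Therefore the polynomial $\sum_{j=1}^r (\vect{c}\vect{S})_j k_j \vx^{\vect{\alpha}_j}$ vanishes for all $\vk,\vx$ if and only if each coefficient $(\vect{c}\vect{S})_j$ is zero, i.e. $\vect{c}\vect{S}=0$. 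Since this condition does not involve $\vk$, it shows that the unconditional exact linear conservation laws are exactly the vectors of the left kernel $W=\{\vect{c}\in\RR^n : \vect{c}\vect{S}=0\}$. This already settles the last claim of the theorem: any $\vect{c}$ defining such a law lies in $W$ and is thus a linear combination of any spanning set of $W$, in particular of the rows of the matrix constructed below.

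Next, by rank--nullity $\dim W = n-\myrank{\vect{S}}$; choosing any basis of $W$ and taking it as the rows of $\vect{C}$ yields a full-rank matrix with $n-\myrank{\vect{S}}$ rows and $\vect{C}\vect{S}=0$. Independence is then immediate: the conservation laws are $\vect{\Phi}(\vk,\vx)=\vect{C}\vx$, so $D_{\vx}\vect{\Phi}=\vect{C}$, which has rank $n-\myrank{\vect{S}}=s$ for every $\vk,\vx$; this is precisely the independence condition of Definition~\ref{def:completeConsLaw}.

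The only step that needs real work is \emph{simplicity}, and this is where I would refine the choice of basis. I would select it among the \textbf{minimal-support} vectors of $W$, namely those $\vect{c}\in W\setminus\{0\}$ for which no nonzero $\vect{c}'\in W$ has $\operatorname{supp}(\vect{c}')\subsetneq\operatorname{supp}(\vect{c})$. These span $W$: given $\vect{c}\in W$ that is not of minimal support, pick $\vect{c}'\in W$ with $\operatorname{supp}(\vect{c}')\subsetneq\operatorname{supp}(\vect{c})$ and a coordinate $i\in\operatorname{supp}(\vect{c}')$, and replace $\vect{c}$ by $\vect{c}-\frac{c_i}{c'_i}\vect{c}'$, which kills coordinate $i$ and hence has strictly smaller support; an induction on the support size then expresses every element of $W$ through minimal-support vectors, from which a basis can be extracted. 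Finally, a minimal-support law $\Phi=\sum_i c_i x_i$ is simple: any split $\Phi=\Phi_1+\Phi_2$ into linear conservation laws with $\Phi_1$ depending on $1\le r'<r$ variables would produce a nonzero $\Phi_1\in W$ whose support is a proper subset of $\operatorname{supp}(\vect{c})$, contradicting minimality, while a nontrivial product split is impossible because a degree-one polynomial cannot factor into two nonconstant polynomials.

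I expect the main obstacle to be exactly this last paragraph. One must first pin down what \emph{simple} means for a linear law in the sense of Definition~\ref{def:irreducibleConsLaw} (it coincides with having minimal support, i.e. the support cannot be broken into smaller supports that are themselves conservation laws), and then verify the two facts that make the basis work: that minimal-support vectors span $W$ and that each of them is individually simple. By contrast, the identification of the laws with $W$, the dimension count, and the independence statement are routine once the linear independence of the monomials is invoked.
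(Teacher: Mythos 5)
Your proof is correct and follows essentially the same route as the paper: linear independence of the monomials $k_j\vx^{\vect{\alpha}_j}$ (from the distinctness hypothesis) identifies the unconditional exact linear laws with the left kernel of $\vect{S}$, and rank--nullity gives the matrix $\vect{C}$. The one place you go beyond the paper is the simplicity claim: the paper merely asserts that non-simple rows "can be decomposed as a sum of simple linear conservation laws" and that the resulting set contains a basis, whereas you actually prove this by selecting minimal-support vectors of the kernel, showing they span (support-reduction induction) and that each is simple; this fills a real gap in the published argument. The only caveat is in your final step: a summand $\Phi_1$ in a sum-split need not a priori have support contained in $\operatorname{supp}(\vect{c})$ under a fully literal reading of Definition~\ref{def:irreducibleConsLaw} (e.g.\ $x_1+x_2=(x_1+x_2-x_3)+x_3$), so your identification of "simple" with "minimal support" implicitly reads "split" as partitioning the variables of the law --- a reasonable interpretation, and the one under which the theorem is true, but worth stating explicitly since the paper's definition does not force it.
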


\begin{proof}
As a direct consequence of the rank-nullity theorem, there is a matrix $\vect{C}$ with  
$n-\myrank{\vect{S}}$ independent rows such that $\vect{C} \vect{S} = 0$. If  $\vect{c}=(c_1,\ldots ,c_n)$
defines an exact, unconditional linear conservation law, we then have 
that  $$\sum_{j=1}^r  (\vect{c}\vect{S})_j k_j  \vx^{\vect {\alpha}_j} =0$$ for all $\vk$,
$\vx$.  If $i \neq j$ one has $\vect{\alpha}_i \neq \vect {\alpha}_j$ or $k_i  \neq k_j $.
It follows that $k_j  \vx^{\vect {\alpha}_j}$ are linearly independent in $\RR[\vk,\vx]$
 and $\vect{c}\vect{S} =0$. Therefore, $\vect{c}$ is a linear combination of rows of $\vect{C}$.
 Those rows of $\vect{C}$ that are not
 simple
 can be decomposed as a sum of simple
  linear conservation laws. The resulting conservation laws form a generating set for the vector space of linear conservation laws 
 and therefore this set contains a basis. 
 The $n-\myrank{\vect{S}}$ elements of this basis can be chosen as the simple rows of $\vect{C}$.
Here, the unconditionality of $\vect{c}$
results from the fact that $\vect{S}$ and therefore
$\vect{c}$ do not depend on $\vk$.
\end{proof}

\begin{remark}
Since $S_{ij} \in \ZZ$, the coefficients $c_i$ can be chosen to be integers. 
For conservative CRNs linear conservation laws can be chosen semi-positive, that is all $c_i\geq 0$ (see \cite{schuster1991determining}). Semi-positive linear
conservation laws are important, because their existence implies that concentrations of
some species are bounded at all times. Also, semi-positive conservation laws allow easy interpretation
in terms of pools of chemical species. An algorithm to compute 
 semi-positive linear conservation laws
can be found in \cite{schuster1991determining}. We must emphasize that it is not always
possible to transform complete systems of linear conservation laws into semi-positive 
linear conservation laws.  
As an example, one may consider
the formation of a heterodimer described by
$\dot x_1 = -k_1x_1x_2 + k_2 x_3,\,
\dot x_2 = -k_1x_1x_2 + k_2 x_3,\,
\dot x_3 = k_1x_1x_2 - k_2 x_3$. This system
has the complete system of
linear conservation laws
$\vect{\Phi}=(x_1-x_2,x_1+x_2-2x_3)$
that can not be transformed into a semi-positive
system. \cor{Of course, semi-positiveness is not relevant
for non-chemical application, except for population 
dynamics models used in ecology.}
\end{remark}

\begin{remark}
In Example~\ref{example2} we have seen that $n-\myrank{\vect{S}}$ independent linear conservation laws may not form a complete system. 
The system in this example satisfies \eqref{eq:crn}, although it is not a mass action network. 
\end{remark}


\begin{theorem}\label{conditions_completeness}
Consider the Jacobian matrix 
$\vect{J}(\vk,\vx) = D_{\vx} \vect{F}(\vk,\vx)$ and the matrix $\vect{C}$ introduced in Theorem~\ref{th:linear_conservation}. The rows of $\vect{C}$ provide
a complete set of linear conservation laws, if for all $\vk \in \RR_+^r$, $\vx \in \RR_+^n$ with $\vect{F}(\vk,\vx)=\mathbf{0}$  the following conditions hold:
\begin{enumerate}[label=(\roman*)]
    \item \label{conditions_completeness_item1} $rk(\vect{J}(\vk,\vx))= rk(\vect{S}) =\myrank{\vect{S}}$, 
    \item \label{conditions_completeness_item2} 
    no row of the product $J(\vk,\vx) \vect{S}$ is zero.
\end{enumerate}



\end{theorem}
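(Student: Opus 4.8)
The plan is to reduce completeness to a single rank equality for the matrix $\vect{J}\vect{S}$ and then to establish that equality from the two hypotheses. Because every conservation law is linear, $\phi_l(\vk,\vx)=\sum_{i=1}^n C_{li}x_i$, the block $D_{\vx}\transpose{\vect{\Phi}}$ is simply the constant matrix $\vect{C}$, so the Jacobian from Definition~\ref{def:completeConsLaw} is
\[
  \vect{J}_{\vect{F},\vect{\phi}}(\vk,\vx)=\begin{pmatrix}\vect{J}(\vk,\vx)\\ \vect{C}\end{pmatrix}.
\]
Completeness (in the form relevant here, i.e.\ on the steady-state set $\{\vk>0,\ \vx>0,\ \vect{F}=0\}$ appearing in the hypotheses) means this $(2n-\myrank{\vect{S}})\times n$ matrix has rank $n$, equivalently that its right kernel $\ker\vect{J}\cap\ker\vect{C}$ is trivial. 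So first I would prove the clean equivalence
\[
  \operatorname{rank}\vect{J}_{\vect{F},\vect{\phi}}(\vk,\vx)=n
  \iff \operatorname{rank}\bigl(\vect{J}(\vk,\vx)\,\vect{S}\bigr)=\myrank{\vect{S}} .
\]

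For this I invoke Theorem~\ref{th:linear_conservation}: the rows of $\vect{C}$ form a basis of the left null space of $\vect{S}$, which equals $(\operatorname{im}\vect{S})^{\perp}$; hence $\ker\vect{C}=\operatorname{im}\vect{S}$. Thus any $\vx_0\in\ker\vect{J}\cap\ker\vect{C}$ can be written $\vx_0=\vect{S}\vy$ with $\vect{J}\vect{S}\vy=0$, and conversely. Since $\ker\vect{S}\subseteq\ker(\vect{J}\vect{S})$, a dimension count ($\dim\ker\vect{C}=\myrank{\vect{S}}$ and $\dim\ker\vect{S}=r-\myrank{\vect{S}}$) yields $\dim(\ker\vect{J}\cap\ker\vect{C})=\myrank{\vect{S}}-\operatorname{rank}(\vect{J}\vect{S})$, and therefore
\[
  \operatorname{rank}\vect{J}_{\vect{F},\vect{\phi}}=n-\myrank{\vect{S}}+\operatorname{rank}(\vect{J}\vect{S}).
\]
This step uses only the defining property $\vect{C}\vect{S}=0$ of $\vect{C}$, and since $\operatorname{rank}(\vect{J}\vect{S})\le\myrank{\vect{S}}$ holds automatically, completeness is exactly the statement that $\vect{J}\vect{S}$ attains its maximal possible rank.

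Condition~(i) then sharpens the picture. Writing $\vect{F}$ in the form~\eqref{eq:crn} gives the factorization $\vect{J}=\vect{S}\vect{B}$ with $B_{jp}=k_j\,\alpha_{jp}\,\vx^{\vect{\alpha}_j}/x_p$, so $\operatorname{im}\vect{J}\subseteq\operatorname{im}\vect{S}$; the rank equality in~(i) upgrades this to $\operatorname{im}\vect{J}=\operatorname{im}\vect{S}$. The columns of $\vect{J}\vect{S}$ then span $\vect{J}(\operatorname{im}\vect{S})=\vect{J}(\operatorname{im}\vect{J})=\operatorname{im}(\vect{J}^2)$, so $\operatorname{rank}(\vect{J}\vect{S})=\operatorname{rank}(\vect{J}^2)$, and the remaining target becomes $\operatorname{rank}(\vect{J}^2)=\operatorname{rank}(\vect{J})$, i.e.\ $\ker\vect{J}\cap\operatorname{im}\vect{J}=\{0\}$ (the eigenvalue $0$ of $\vect{J}$ is semisimple).

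The remaining inequality $\operatorname{rank}(\vect{J}\vect{S})\ge\myrank{\vect{S}}$ is where condition~(ii) together with positivity must enter, and I expect this to be the crux. Condition~(ii) asserts that no row of $\vect{J}\vect{S}$ vanishes, equivalently that no gradient row $\vect{J}_{i,:}$ lies in the span of the conservation laws. By itself this is strictly weaker than maximal rank: over an arbitrary field one can exhibit matrices obeying~(i) with every row of $\vect{J}\vect{S}$ nonzero yet $\operatorname{rank}(\vect{J}\vect{S})<\myrank{\vect{S}}$, so the argument cannot be purely dimensional and must use that we sit at a point with $\vk>0$, $\vx>0$ and that $\vect{B}=\vect{K}\vect{V}\vect{A}\vect{X}^{-1}$ carries the monomial structure with strictly positive diagonal factors $\vect{K},\vect{V},\vect{X}^{-1}$ and nonnegative exponent matrix $\vect{A}$. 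Concretely, I would assume for contradiction a nonzero $\vx_0=\vect{S}\vy\in\ker\vect{J}\cap\operatorname{im}\vect{S}$ and track the supports and signs that this monomial structure imposes on the entries of $\vect{J}\vect{S}$, showing that the non-vanishing of every row is incompatible with such an $\vx_0$. This sign-and-support bookkeeping, rather than the linear algebra of the earlier steps, is the part I expect to require genuine care.
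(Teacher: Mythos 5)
Your opening reductions are correct and in fact sharper than what the paper records: the identity $\operatorname{rank}\vect{J}_{\vect{F},\vect{\phi}}=n-\myrank{\vect{S}}+\operatorname{rank}(\vect{J}\vect{S})$ (via $\ker\vect{C}=\operatorname{im}\vect{S}$ and a dimension count), the factorization $\vect{J}=\vect{S}\vect{B}$, and the consequence of (i) that $\operatorname{im}\vect{J}=\operatorname{im}\vect{S}$, so that completeness becomes $\ker\vect{J}\cap\operatorname{im}\vect{J}=\{0\}$, all check out. But the proposal then stops exactly where the theorem has to be proved: you never derive $\operatorname{rank}(\vect{J}\vect{S})=\myrank{\vect{S}}$ from condition (ii). You announce a strategy ("track the supports and signs"), correctly observe that a purely dimensional argument cannot work because (ii) is strictly weaker than the needed rank statement over a general field, and defer the "genuine care" to future work. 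That is a gap, not a proof: the one implication the theorem asserts is the one left unestablished.

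For comparison, the paper's own proof is a three-line direct argument: (i) gives $\myrank{\vect{S}}$ linearly independent rows of $\vect{J}$; (ii) says each of these rows has nonzero image under right-multiplication by $\vect{S}$, hence lies outside the left kernel of $\vect{S}$, which is the row space of $\vect{C}$; therefore $\transpose{(\vect{J},\vect{C})}$ has rank $n$. Note that this inference uses (ii) only in the weak, row-by-row form that you yourself identify as insufficient in general: to conclude rank $n$ one needs $\operatorname{rowspace}(\vect{J})\cap\operatorname{rowspace}(\vect{C})=\{0\}$, and "no individual row of $\vect{J}$ lies in $\operatorname{rowspace}(\vect{C})$" does not imply this (a nontrivial combination of rows of $\vect{J}$ could still land in $\operatorname{rowspace}(\vect{C})$). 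So your analysis, while incomplete as a proof, pinpoints a real weakness that the paper's argument glosses over; if you want to salvage the statement along your lines, the missing lemma is precisely that, for Jacobians of the form $\vect{S}\vect{B}$ with the nonnegative monomial structure of $\vect{B}$ at a positive point, condition (ii) forces $\operatorname{rowspace}(\vect{J})\cap\operatorname{rowspace}(\vect{C})=\{0\}$ (equivalently $\operatorname{rank}(\vect{J}^2)=\operatorname{rank}(\vect{J})$), and that lemma still needs to be proved or the hypothesis strengthened.
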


\begin{proof}
We check that the assumptions imply the conditions in Definition \ref{def:completeConsLaw}.
From \ref{conditions_completeness_item1}  it follows that $\vect{J}(\vk,\vx)$ has $\myrank{\vect{S}}$ independent rows. Moreover, from \ref{conditions_completeness_item2} we obtain 
that these rows are not in the left
kernel of $\vect{S}$, which is spanned by the $n-\myrank{\vect{S}}$
rows of $\vect{C}$. Thus the matrix 
$\transpose{( \vect{J}(\vk,\vx) , \vect{C} )}$ has rank $n$.
\end{proof}


The CRN in 
Example~\ref{example2} has no complete set of linear conservation laws. This model fails to satisfy condition \ref{conditions_completeness_item2} of the theorem, but it fulfills \ref{conditions_completeness_item1}. The network is not 
a mass action CRN, so
one may ask whether mass action CRNs automatically
satisfy both \ref{conditions_completeness_item1} and \ref{conditions_completeness_item2}. The answer to this question is no, as is shown by the following example.
\begin{example}
The CRN 
$$\dot x_1 = - k_1x_1 - k_2x_1 - 2k_3x_1^2,\quad \dot x_2 = 2k_3x_1^2 + 2k_2x_1,\quad \dot x_3 = k_3x_1^2 + k_1x_1$$
is a mass action network described by the reactions
$$A_1 \xrightarrow{k_1} A_3,\quad A_1 \xrightarrow{k_2} 2 A_2,\quad 2 A_1 \xrightarrow{k_3} 2A_2 + A_3.$$
Its stoichiometric matrix  is $$\vect{S}=\begin{pmatrix} -1 &   -1 &   -2 \\ 0 &    2 &    2\\ 1  &   0 &    1 \end{pmatrix}$$ and one easily checks that it has rank two. There is a single linear conservation law, namely $\phi(\vk,\vx) = 2x_1 + x_2 + 2x_3 $. 
 The Jacobian matrix of the system is  
 $$J(\vk,\vx) = \begin{pmatrix} - k_1 - k_2 - 4k_3x_1 & 0 &  0 \\  2k_2 + 4k_3x_1 & 0 & 0  \\ 
k_1 + 2k_3x_1 & 0&   0\end{pmatrix},$$ which clearly has rank one. Therefore, this model does not satisfy  condition  \ref{conditions_completeness_item1} of the Theorem~\ref{conditions_completeness}, whereas  \ref{conditions_completeness_item2} is fulfilled. Furthermore, 
the equations $\vect{F}(\vk,\vx)=0$, $\phi(\vk,\vx)=c_0$, which 
define the intersection of $\mathcal{S}_{\vk}$ with the
stoichiometric compatibility class, 
have degenerate solutions $x_1=0,\, x_2 = c_0 - 2 x_3$ and so the conservation law is not complete. 
\end{example}

In  \cite{feliu2012preclusion}, sufficient conditions were found for mass action CRNs to be ``injective", meaning to have a unique steady state in 
any stoichiometric compatibility class (see Theorems 5.6 and 9.1 of \cite{feliu2012preclusion}). The same conditions also imply completeness 
of any $n-\myrank{\vect{S}}$ independent, linear conservation laws.

\section{Computing Nonlinear Conservation Laws}

Nonlinear conservation laws, i.e.~first integrals, of polynomial systems of ODEs have been approached by means of a variety
of methods. For example, simple conservation laws, 
i.e.~expressions composed of 
polynomials, logarithms and exponentials, can be obtained by {\bf Darboux polynomials} \cite{prelle1983elementary,cheze2011computation}. 
The bottleneck of this method is the computation of all Darboux polynomials 
of the ODE system. This problem has been approached in computational algebra by 
using Gr\"obner bases \cite{man1993computing}. 
For particular  models, \cor{for instance} Lotka-Volterra models, Darboux polynomials
and conservation laws of specific kind can be
obtained with the methods described in \cite{mahdi2017conservation}. 

In this section we restrict ourselves to polynomial
conservation laws.  
We show that polynomial conservation laws are obtained 
from syzygies, that are easier to compute than
Darboux polynomials. Our methods 
\cor{are more general and apply to models not covered}  in \cite{mahdi2017conservation}. However,  
we do not cope with parametric exponential 
conservation laws existing
in Lotka-Volterra models.

We also discuss monomial and 
rational monomial conservation laws that can be 
effectively computed without syzygies or Darboux polynomials.   
We show that the computation of  monomial conservation laws can be reduced to the 
computation of linear conservation laws. 
Finally we use the
databases {\it Biomodels} \cite{BioModels2015a} and {\it ODEbase} \cite{luders2022odebase}, to study  
the scalability of our methods.

\subsection{Computing Monomial Conservation Laws}
\label{sec:monomial}

The following proposition establishes a link between 
linear and monomial conservation laws \cor{(see Theorem 1 of \cite{goldman1987integrals} for a similar result)}.
\begin{proposition}\label{prop:MCL}
Let $E$ be a system of ODEs given by 
$$\dot{x_1}=f_1(\vk,\vx), \, \dots, \, \dot{x_n}=f_n(\vk,\vx)$$
and let $E'$ be the system of ODEs given by
$$\dot{x_1}=\frac{f_1(\vk,\vx)}{x_1}, \, \dots, \, \dot{x_n}=\frac{f_n(\vk,\vx)}{x_n}.$$
Then $E$ admits a monomial conservation law if and only if $E'$ admits a linear conservation law with 
integer coefficients. Moreover, if the linear conservation law for $E'$ is 
$m_1x_1+\dots+m_n x_n$ then the monomial conservation law for $E$ is 
$x_1^{m_1}\cdots x_n^{m_n}$.
The linear conservation laws of $E'$ are \cor{simple} if and only if the
corresponding monomial conservation laws of $E$ are \cor{simple}. 
\end{proposition}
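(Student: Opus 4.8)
The plan is to reduce all three assertions to a single algebraic identity obtained by differentiating a monomial. First I would take a candidate monomial $\phi(\vx)=x_1^{m_1}\cdots x_n^{m_n}$ with $m_i\in\ZZ$ and compute $\DP{\phi}{x_i}=m_i\,\phi/x_i$. Substituting into the defining condition of a conservation law for $E$ (Definition~\ref{def:exactandapproxcons}) gives
\[
\sum_{i=1}^n \DP{\phi}{x_i}\,f_i(\vk,\vx)=\phi(\vx)\sum_{i=1}^n m_i\,\frac{f_i(\vk,\vx)}{x_i}.
\]
Since $\phi$ does not vanish on the positive orthant $\RR_+^n$ on which conservation laws are tested, this expression vanishes identically if and only if $\sum_{i=1}^n m_i\,f_i/x_i=0$. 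But $f_i/x_i$ is exactly the $i$-th right-hand side of $E'$, and for the linear form $\psi(\vx)=m_1x_1+\cdots+m_nx_n$ one has $\DP{\psi}{x_i}=m_i$, so the very same expression $\sum_i m_i f_i/x_i$ is the defining condition for $\psi$ to be a linear conservation law of $E'$. This one identity simultaneously yields the equivalence ``$E$ has the monomial law $\phi$ iff $E'$ has the linear law $\psi$'' together with the explicit correspondence $m_1x_1+\cdots+m_nx_n \leftrightarrow x_1^{m_1}\cdots x_n^{m_n}$, the integrality of the exponents matching the integrality of the coefficients on the two sides.

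For the statement about simplicity I would make this correspondence structural. By the previous paragraph the exponent vectors $(m_1,\dots,m_n)\in\ZZ^n$ of monomial conservation laws of $E$ and the coefficient vectors of integer linear conservation laws of $E'$ form \emph{the same} subset $L\subseteq\ZZ^n$, and $L$ is closed under addition because the condition $\sum_i m_i f_i/x_i=0$ is linear in $(m_i)$. The key observation is that multiplication of monomials and addition of linear forms are \emph{both} just vector addition of the associated integer vectors, since $\bigl(\prod_i x_i^{a_i}\bigr)\bigl(\prod_i x_i^{b_i}\bigr)=\prod_i x_i^{a_i+b_i}$ and $\sum_i a_ix_i+\sum_i b_ix_i=\sum_i(a_i+b_i)x_i$. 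Moreover the number of variables on which such a law depends equals the number of nonzero entries of its vector (for $\phi$ this is where $\DP{\phi}{x_i}=m_i\phi/x_i$ is nonzero on $\RR_+^n$, for $\psi$ where $\DP{\psi}{x_i}=m_i$ is nonzero), so the ``depends on $r'$ variables'' count of Definition~\ref{def:irreducibleConsLaw} agrees on the two sides. Hence a factorization of $\phi$ into two monomial conservation laws, one depending on $r'$ variables with $1\le r'<r$, exists if and only if the matching additive splitting of $\psi$ into two linear conservation laws of the same support sizes exists, which gives ``$\phi$ simple $\iff$ $\psi$ simple''.

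The main obstacle is to make precise that, in Definition~\ref{def:irreducibleConsLaw}, the only decompositions able to witness non-simplicity are the ones captured by this vector-addition correspondence. For a linear form this is immediate, since a product of two conservation laws each depending on at least one variable is at least quadratic, so only sum-decompositions are relevant. For a monomial the analogous point is that only product-decompositions, equivalently splittings of the exponent vector, are relevant, and one must exclude a monomial arising as an honest sum of two lower-support conservation laws of mixed type; I would handle this by restricting, as is implicit throughout Section~\ref{sec:monomial}, to decompositions within the monomial and linear classes, so that simplicity is governed precisely by the additive structure of $L$ together with the support-size function, both of which the identity map between exponent and coefficient vectors preserves. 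Once this is settled, all three assertions follow from the single identity of the first paragraph.
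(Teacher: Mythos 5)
Your proposal is correct and follows essentially the same route as the paper: the identity $\sum_i \DP{\phi}{x_i}f_i=\phi\sum_i m_i f_i/x_i$ together with non-vanishing of $\phi$ on $\RR_+^n$ is exactly the paper's computation $\dot M=\bigl(\sum_i m_i\dot x_i/x_i\bigr)M$, and your translation of monomial factorizations into additive splittings of the exponent vector is precisely how the paper handles simplicity. Your third paragraph flags a real subtlety that the paper's proof silently skips --- Definition~\ref{def:irreducibleConsLaw} allows non-simplicity to be witnessed by a sum as well as a product, so one must rule out a monomial conservation law decomposing as a sum of two lower-support conservation laws of arbitrary type --- and your resolution by restricting to decompositions within the monomial and linear classes matches the reading the paper implicitly adopts.
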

\begin{proof}
Let $M=x_1^{m_1}\cdots x_n^{m_n}$ be a monomial. Then the derivative of $M$ is 
$$\dot M=\big( \sum_{i=1}^n m_i\frac{\dot{x_i}}{x_i} \big) M . $$
The equivalences
$$\dot M=0 
\Longleftrightarrow \sum_{i=1}^n m_i\frac{\dot{x_i}}{x_i}=0 \Longleftrightarrow \sum_{i=1}^n m_i\frac{f_i(\vk,\vx)}{x_i}=0$$
prove the first part of the proposition.

Let $\phi = m_1 x_1 + \dots + m_n x_n$ be the linear conservation law of $E'$ corresponding
to the monomial conservation law $M$ of $E$.
Assume that $M$ is \cor{not simple}, say $M=M_1M_2$ with 
$M_k= x_1^{m_{k1}}\cdots x_n^{m_{kn}}$
and $\dot M_k=0$ for $k\in \{1,2\}$.
 Then by the first part $$\dot M_k=0\Longleftrightarrow \sum_{i=1}^n m_{ki}\frac{\dot{x_i}}{x_i}=0$$ for $k\in \{1,2\}$.
Thus, $\phi = \phi_1 + \phi_2$, where $\phi_k =  m_{k1} x_1+\dots + m_{kn} x_n$ are linear conservation laws of $E'$ for 
$k \in \{1,2\}$.
This means that the linear conservation for $E'$ is  \cor{not simple}. The reverse follows from the same reasoning.
\end{proof}
\cor{We consider that $E'$ has the structure given by \eqref{eq:crn}
with a stoichiometric matrix $\vect{S}'$.
This is always possible, eventually after using the Algorithm~\ref{alg:S}. }

\begin{remark}
Proposition~\ref{prop:MCL} holds even when the r.h.s.~of $E'$ are sums of 
rational monomials, which can be the case when $E$ is a system as in \eqref{eq:fi}
and some of the polynomials $f_i(\vk,\vx)$ are not a multiple of $x_i$. In this
case, a stoichiometric matrix $\vect{S}'$ can still be associated to $E'$ using 
Algorithm~\ref{alg:S} and the linear conservation laws of $E'$ can be computed
from the matrix $\vect{S}'$. This procedure is summarized in Algorithm~\ref{alg:Smonomial}.
\end{remark}

\begin{algorithm}[ht!] 
\begin{algorithmic}[1]
    \caption{\label{alg:Smonomial}$\operatorname{StoichiometricMonomialConservation}$}

 \REQUIRE 
 Polynomial ODE system $E$ written as in \eqref{eq:fi}.
 \smallskip
\ENSURE Integer coefficient matrix $\vect{C}$ whose
rows define monomial conservation laws $\vx^{\vect{C}_i}$
for $E$.
\STATE{Compute $E'$ by dividing each ODE in $E$ by $x_i$.}
\STATE{$\vect{S}' = \text{Smatrix}(E')$.}
\STATE{Find $\vect{C}$ such that 
$\vect{C}\vect{S}'=0$ and $\vect{C}$ has $n-\myrank{\vect{S}}$ \cor{simple}, independent rows, where $\myrank{\vect{S}}=rk(\vect{S}')$. }
\end{algorithmic}
\end{algorithm}

\begin{example}\label{example3}
The model 
\begin{eqnarray*}
\dot x_1 &=& x_1 (x_2 - x_1 ) - \delta x_1, \\  \dot x_2 &=& x_2 (x_1 - x_2 )
\end{eqnarray*}
is a mass action network and is described by the reactions  
$$A_1 + A_2 \xrightarrow{1} 2A_2,\quad A_1+A_2 \xrightarrow{1} \emptyset,\quad A_2 + A_2 \xrightarrow{1} \emptyset, \quad
A_1 \xrightarrow{\delta} \emptyset.$$
The truncated system 
\begin{eqnarray*}
\dot x_1 &=& x_1 (x_2 - x_1 ) , \\ \dot x_2 &=& x_2 (x_1 - x_2 )
\end{eqnarray*}
has the steady state variety  $x_1=x_2$ and Jacobian matrix is singular on it.
After truncation and elimination of the factors  $x_1$, $x_2$ we obtain the system 
\begin{eqnarray*}
\dot x_1 &=& x_2 - x_1 , \\ \dot x_2 &=& x_1 - x_2 ,
\end{eqnarray*}
which has the linear \cor{simple} conservation law $\phi_l(x_1,x_2)=x_1 +x_2$.
We conclude that the model has the monomial \cor{simple} approximate conservation law  $\phi_m(x_1,x_2) =x_1 x_2$. 
The monomial conservation law is complete, since  
the $2\times 2$ minor
$$\mathrm{det}(\transpose{D_{\vx} (x_1(x_2-x_1),x_1x_2)} )  = - 2x_1^2$$
 of the Jacobian matrix does not vanish for $x_1 > 0$. 
For $c_0 \in \RR_+$ the intersection of the steady state variety with the compatibility class 
defined by $\phi_m(x_1,x_2) = c_0$ and $x_1,x_2 >0$ is the point 
$x_1=x_2=\sqrt{c_0}$.
\end{example}

\begin{example}\label{example5}
A model by Volpert (see \cite{mahdi2017conservation}) is described by the mass action network
$$A_1 + A_2 \xrightarrow{1} 2A_2,\quad A_2 +A_3 \xrightarrow{1} 2A_3,
\quad A_3 +A_1 \xrightarrow{1} 2A_1,$$
and the ODEs
\begin{eqnarray*}
\dot x_1 &=& x_1 (x_3 - x_2), \\ 
\dot x_2 &=& x_2 (x_1 - x_3), \\
\dot x_3 &=& x_3 (x_2 - x_1)
\end{eqnarray*}
have the steady state 
variety  $x_1 = x_2 = x_3$. 
Moreover, 
the stoichiometric matrix $$\vect{S} =  
\begin{pmatrix} 
-1 & 0 & 1 \\ 
1 & -1 & 0 \\
0 & 1 & -1 \end{pmatrix}$$ 
 has rank $\myrank{\vect{S}}=2$ and 
its left kernel is generated by
$\vect{c}=(1,1,1)$.
The stoichiometric matrix of the system obtained after
elimination of the factors $x_1$, $x_2$, $x_3$  is 
$$\vect{S}' =  
\begin{pmatrix} 
0 & -1 & 1 \\ 
1 & 0 & -1 \\
-1 & 1 & 0 \end{pmatrix}$$ and has rank $\myrank{\vect{S}}=2$. Moreover, its left 
kernel is also generated by 
$\vect{c}=(1,1,1)$.
Thus the model has two approximate conservation laws, one linear $\phi_l(\vx) = x_1 + x_2 + x_3$
and one monomial $\phi_m(\vx) = x_1 x_2 x_3$. 
Both conservation laws are complete. Indeed, the Jacobian  $\transpose{D_{\vx}(F_1,F_2,F_3,\phi_m)}$ 
has a non-vanishing $3\times 3$ minor 
$$
\mathrm{det}(\transpose{D_{\vx} (x_1(x_3 - x_2),x_2(x_1 - x_3),x_1x_2x_3)}) = x_1^2x_2x_3 + x_1x_2^2x_3 + x_1x_2x_3^2.
$$
Moreover,
the Jacobian $\transpose{D_{\vx}(F_1,F_2,F_3,\phi_l)}$  
has a $3\times 3$ minor 
\begin{eqnarray*}
M &=&\mathrm{det}(\transpose{D_{\vx} (x_1(x_3 - x_2),x_2(x_1 - x_3),x_1+x_2+x_3)}) \\
&=&- (x_1-x_2)^2  + 2x_1x_3  + 2x_2x_3 - x_3^2
\end{eqnarray*}
On $\mathcal{S}_{\vk}\cap \RR_+^3$
we have $x_1=x_2=x_3$ and so $M=3x_1^2$ does not vanish there. 

However, the two conservation laws are not independent on $\mathcal{S}_{\vk}\cap \RR_+^3$. Indeed, for $x_1=x_2=x_3$ 
we have 
$$D_{\vx}\transpose{(\phi_l,\phi_m)} = 
\begin{pmatrix}
1 & 1 & 1 \\
x_1^2 & x_1^2 & x_1^2
\end{pmatrix}
.$$

We want to point out that in this example we have used a special
choice of the rate constants.  
Parameter dependent monomial
conservation laws can be computed with 
methods presented in
\cite{goldman1987integrals}.
In particular, monomial
conservation laws with 
parameter dependent exponents are reported in \cite{mahdi2017conservation}. 
\end{example}


\subsection{Computing Polynomial Conservation Laws from Syzygies}\label{sec:syzygies}
In this section, we are going to compute parametric polynomial conservation laws. To this end, we will use results and techniques from computational algebra, that is, from Gröbner bases theory.
Our algorithm will be based on the computation of parametric syzygies for the polynomial right-hand sides in $\QQ[\vx,\vk]$ of \eqref{eq:fi}. Since these right-hand sides depend on the parameters $\vk$, we will compute a comprehensive Gröbner system and compute for each branch the parametric syzygies. The comprehensive Gröbner system will give us a decomposition of the parameter space into semi-algebraic sets such that evaluating the parameters $\vk$ at every point of such a semi-algebraic set, the syzygies of the respective Gröbner basis will still be syzygies but with entries in $\QQ[\vx]$.
Vector calculus will then tell us which syzygies are the gradients of polynomial functions in $\QQ[\vk,\vx]$ defining parametric conservation laws. 

In the first subsection, we introduce the concept of syzygies for a polynomial system, and in the second subsection, we explain comprehensive Gröbner systems. The next subsection is devoted to establishing a connection between syzygies, conservative vector fields, and conservation laws. We also explain there why we need to consider the set of syzygies as a $\QQ$-vector space instead of a $\QQ[\vx,\vk]$-module. Having this connection, in the next subsection, we develop our algorithm. Finally, in the last subsection, we present the results and discuss our benchmark.

\subsubsection{The $\QQ$ -Vector Space of Syzygies}
In this section $\QQ[\vx]$ denotes a polynomial ring in the indeterminates
$\vx=(x_1,\ldots,x_n)$ over the field of rational numbers $\QQ$. For $m$ polynomials 
\[
F=(f_1,\dots,f_m) 
\]
of $\QQ[\vx]$ we consider the ideal $I$ of $\QQ[\vx]$ generated by the polynomials in $F$.
\begin{definition}
  A syzygy of $I$ with respect to the basis $F=( f_1,\ldots,f_m )$ is an
  element $\vg=(g_1,\ldots,g_m) \in \QQ[\vx]^m$ such that
  $$  g_1f_1 + \ldots + g_mf_m = 0.$$
  A syzygy of the form 
$$\boldsymbol{g}=(0,\dots,0,f_j,0,\dots,0,-f_i,0,\dots,0) ,$$ 
where $f_j$ is the $i-$th and $-f_i$ is the $j-$th entry of $\boldsymbol{g}$, is called a trivial syzygy. The set of all syzygies for $F$ is denoted by $\Syz(F)$.
\end{definition}
 In the definition of a syzygy, some authors require
  $F=(f_1,\ldots,f_m)$ to be a minimal generating set of the ideal
  $I$ (for example \cite{eisenbud2005geometry}). However, for the purpose of this paper, we do not require such
  a condition on the generators. 
 Moreover, it is well known that $\Syz(F)$ form a $\QQ[\vx]$-submodule of $\QQ[\vx]^m$.

\begin{remark}
In this section, for computational purposes, we consider only 
polynomials over the rational numbers, that is polynomials in $\QQ[\vx]$. Note that since $\QQ \subset \RR$ all computations are also valid over $\RR$ and so all results can be considered as polynomials in $\RR[\vx]$ making them applicable to CRNs.
\end{remark}

\begin{example}
 Consider the ideal $I=\langle x_1,x_2 \rangle \subseteq \QQ[x_1,x_2]$. Since 
$$(-x_2)x_1+(x_1)x_2=0,$$ the vector $\vg=(-x_2,x_1) \in \QQ[x_1,x_2]^2$ is a trivial syzygy of the ideal $I$ 
\cor{
with respect to $F=\{x_1,x_2\}$. One can easily see that the $\QQ[x_1,x_2]-$ module $\Syz(F)$ is generated by $\vg$, that is, every syzygy is a multiplication of $\vg$ by a polynomial in $\QQ[x_1,x_2]$.
}
\end{example}

Syzygies have been extensively studied in commutative algebra and
algebraic geometry. Buchberger's algorithm for computing Gr\"obner
bases \cite{Buchberger:65a,Bose1995} leads to an algorithm which
computes a basis \cor{of} a syzygy module (see Schreyer's results in
\cite{berkesch2015syzygies,Schreyer1991}). One can also obtain a basis
of the syzygy module of $I$ with respect to a generating set $F$ as a bi-product of computing a Gr\"obner basis of $I$ via
signature-based algorithms \cite{GaoVW16,volny2011,hauke2020}.  Many
computer algebra systems such as Singular, CoCoA, etc.~include
implementations for syzygy computations. For more on syzygies and in
particular on algorithms, we refer to the books
of Weispfenning \cite{BeckerWeispfenning:93a} and Cox, Little \&
O'Shea \cite[Chapter 5.3]{cox2006using}.

Above we mentioned that $\Syz (F)$ is a $\QQ[\vx]$-module. It is clear that $\Syz (F)$ is a $\QQ$-vector space. We will need to consider here the structure of $\Syz (F)$ as a vector space instead of a module since for the computation of conservation laws we need to restrict to syzygies with a specific property which is only respected by the $\QQ$-vector space structure and not by the $\QQ[\vx]$-module structure as we will see in the subsections \ref{sub:connectionSyzygiesadnConservationlaws} and \ref{sub:ComputingConservationLawsfromSyzygies}. 
\begin{lemma}
 Let $G$ be a Gr\"obner basis for the $\QQ[\vx]$-module $\Syz (F)$ and denote the set of monomials in $\QQ[\vx]$ by $[\vx]$. Consider $\Syz  (F)$ as a vector space over $\QQ$. Then 
  \[
    \mathrm{Gen}(\Syz (F)):= \{ u \vg \mid \vg \in G, \ u \in [\vx] \}
  \]
  is a generating set for the $\QQ$-vector space $\Syz (F)$.
\end{lemma}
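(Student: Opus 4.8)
The plan is to exploit the fact that a Gröbner basis is, in particular, a generating set for the module $\Syz(F)$ over $\QQ[\vx]$, and then to expand the polynomial coefficients occurring in a module-theoretic representation into their constituent monomials. First I would fix an arbitrary element $\vh \in \Syz(F)$ and, using that $G$ generates $\Syz(F)$ as a $\QQ[\vx]$-module, write $\vh = \sum_{\vg \in G} p_{\vg}\,\vg$ with $p_{\vg} \in \QQ[\vx]$. Each coefficient is a finite $\QQ$-linear combination of monomials, $p_{\vg} = \sum_{u \in [\vx]} c_{\vg,u}\, u$ with $c_{\vg,u} \in \QQ$ and almost all of them zero. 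Substituting this and reordering the resulting finite double sum yields $\vh = \sum_{\vg,u} c_{\vg,u}\,(u\vg)$, which exhibits $\vh$ as a $\QQ$-linear combination of elements of $\mathrm{Gen}(\Syz(F))$. This establishes that $\Syz(F)$ is contained in the $\QQ$-span of $\mathrm{Gen}(\Syz(F))$.

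It then remains to check the reverse containment, namely that every generator $u\vg$ actually lies in $\Syz(F)$, so that $\mathrm{Gen}(\Syz(F))$ is a subset of the vector space it is claimed to generate. This is precisely where the module structure is used: writing $\vg=(g_1,\ldots,g_m)$, one computes $\sum_{j} (u g_j) f_j = u \sum_{j} g_j f_j = u\cdot 0 = 0$, since $\vg$ is a syzygy, so $u\vg \in \Syz(F)$. Equivalently, $\Syz(F)$ being a $\QQ[\vx]$-submodule of $\QQ[\vx]^m$ is by definition closed under multiplication by the monomial $u$. Together with the spanning statement above, this gives $\Syz(F) = \operatorname{span}_{\QQ}\bigl(\mathrm{Gen}(\Syz(F))\bigr)$, as required.

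I do not expect a genuine obstacle here: the lemma is essentially the observation that a module generating set over $\QQ[\vx]$ becomes a vector-space generating set over $\QQ$ as soon as one replaces each module generator by the collection of all its monomial multiples. The only points requiring a little care are ensuring that the expansion of each $p_{\vg}$ into monomials is finite (which it is, since $p_{\vg}$ is a polynomial) and that the reindexing of the double sum over the finitely many nonzero $c_{\vg,u}$ is legitimate; both are routine. I would emphasize in the write-up that the monomial multiplication is the step that genuinely invokes the $\QQ[\vx]$-module structure, since this is exactly the property that the later restriction to specially-structured syzygies will break, motivating the authors' insistence on the $\QQ$-vector space viewpoint.
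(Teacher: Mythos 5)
Your proof is correct and follows essentially the same route as the paper's: write an arbitrary syzygy as a $\QQ[\vx]$-linear combination of the Gröbner basis elements, expand each polynomial coefficient into monomials, and reorder the finite double sum to obtain a $\QQ$-linear combination of the monomial multiples $u\vg$. Your extra verification that each $u\vg$ lies in $\Syz(F)$ (which the paper leaves implicit, as it follows at once from the $\QQ[\vx]$-module structure) is a harmless and correct addition.
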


\begin{proof} 
 Let $\vs \in \Syz (F)$ be a syzygy. Since $G=\{ \vg_1,\dots,\vg_r\}$ is a Gröbner basis for the $\QQ[\vx]$-module $\Syz (F)$, there 
  are $h_1,\dots,h_r \in \QQ[\vx]$  such that 
  \[
  \vs = \sum_{i=1}^r h_i \vg_i .
  \]
  For each $h_i$ there are monomials $u_{ij}$ with $j=1,\dots,r_i$ with some $r_i \in \NN$ and constants $c_{ij} \in \QQ$ such that 
  \[
  h_i = \sum_{j=1}^{r_i} c_{ij} u_{ij} .
  \]
  We conclude that 
    \[
  \vs = \sum_{i=1}^r h_i \vg_i = \sum_{i=1}^r  \sum_{j=1}^{r_i} c_{ij} u_{ij} \vg_i .
  \]
  is a $\QQ$-linear combination of elements of $\{ u \vg \mid \vg \in G, \ u \in [\vx] \}$. 
\end{proof}
\begin{remark}\label{rmk:DegdSzygyVecSpace}
    If we choose a non-negative integer $d \in \NN$, then the  syzygies 
      \[
    \Syz (F)_d: = \{ \vg=(g_1,\dots,g_m) \in \Syz (F) \mid deg(g_i) \leq d,
    \quad 1 \leq i \leq m\},
  \]
    of degree at most $d$ form a finite dimensional $\QQ$-vector subspace of $\Syz (F)$. By the proof of Proposition 4, Chap. 9, Section 3 in \cite{cox96:_ideal_variet_algor}, \cor{if $G$ is a Gr\"obner basis with respect to a degree ordering}, then the set 
    \[\mathrm{Gen}(\Syz (F)_{d}):= \{ u \vg \in \mathrm{Gen}(\Syz (F)) \mid deg(u g_i) \leq d , \ 1 \leq i \leq m \} \]
    is a finite generating set for $\Syz (F)_d$. Furthermore, those elements of $\mathrm{Gen}(\Syz (F)_d)$ whose leading terms are different form a basis for the vector space $\Syz(F)_d$. 
    Alternatively, one can use Gaussian elimination to determine a basis of $\Syz(F)_d$ from $\mathrm{Gen}(\Syz (F)_{d})$.  
\end{remark}
\subsubsection{Comprehensive Gröbner Systems}\label{subsec:cgs}

To compute parametric conservation laws from syzygies for CRNs we need to handle polynomial equations whose coefficients are again polynomials in the parameters, that is polynomials 
\[
F=(f_1(\vk,\vx),\dots,f_m(\vk,\vx))
\]
in the ring $\QQ[\vk][\vx]$ in the indeterminates $\vx=(x_1,\dots,x_n)$ over the coefficient ring $\QQ[\vk]$ in the indeterminates $\vk =(k_1,\dots,k_r)$. The respective tool from computational algebra is the notion of a comprehensive Gröbner system.

 A specialisation of
$\QQ[\vk]$ is a homomorphism $\sigma: \QQ[\vk] \rightarrow \QQ$. Every
element $\boldsymbol{\alpha} \in \QQ^r$ defines a specialisation
$\sigma_{\boldsymbol{\alpha}}$ by evaluating $\vk$ at
$\boldsymbol{\alpha}$
and it can be canonically extended to a
specialisation $$\sigma_{\boldsymbol{\alpha}}: \QQ[\vk][\vx] \rightarrow \QQ[\vx].$$



\begin{definition}
  Let $F$ and $G_1,\dots, G_l$ be finite subsets of
  $\QQ[\vk][\vx]$. Further let $A_1, \dots, A_l$ be semi-algebraic
  sets of $\QQ^r$ and let $S \subseteq \QQ^r$ such that
  $S \subset A_1 \cup \dots \cup A_l$. A finite set
  $$\mathcal{G} = \{ (A_1 , G_1 ),\dots, (A_l , G_l ) \}$$ is called a
  {\bf comprehensive Gr\"obner system (CGS)} on $S$ for $F$ if for all
  $i = 1,\dots,l$ the set $\sigma_{\boldsymbol{\alpha}}(G_i)$ is a
  Gr\"obner basis for the ideal
  $\langle\sigma_{\boldsymbol{\alpha}}(F) \rangle \subset \QQ[\vx]$
  for every $\boldsymbol{\alpha} \in A_i$. Each $(A_i,G_i)$ is called
  a branch of $\cal{G}$. The generic branch corresponds to the
  semi-algebraic set that assumes all leading coefficients are
  non-zero and ignores the other cases.
   If $S = \QQ^r$, then
  $\cal{G}$ is called a comprehensive Gr\"obner system for $F$.

\end{definition}

Comprehensive Gr\"obner systems were introduced by Weispfenning in
1992 in \cite{Weispfenning92}. Independent of Weispfenning, Kapur
introduced the same concept in \cite{kapur95}, calling it parametric
Gr\"obner bases.  Today in the literature there are many publications
dealing with the theory and algorithmics of CGSs, for example
\cite{SuzukiS03,Montes02,DarmianHM11, grobcov-book,
  KapurSW13a,Kapur17, HashemiDB17} only mentioning a few.
For a survey paper on this topic see \cite{cgs-survey-lu19}.
Moreover, several computer algebra systems provide implementations of
algorithms to compute comprehensive Gr\"obner systems, such as {\it
  Reduce} \cite{10.1145/2402536.2402544,Hearn:05a,DolzmannSturm:97a,
  Sturm:07a} and {\it Singular} \cite{singular}.

\cor{
\begin{lemma}[cf. Lemma 3.7, Chapter 5.3, \cite{cox2006using}]
Let $\{ (A_1 , G_1 ),\dots, (A_l , G_l ) \}$ be a CGS of the ideal in $\QQ[\vk,\vx]$ generated by 
\[
F=(f_1(\vk,\vx), \dots, f_n(\vk,\vx)).
\]
For every $i$, $1 \leq I \leq l$, there exists a linear map that converts a basis of the syzygy modules $\Syz(F)$ to a basis of the syzygy module $\Syz(G_i)$. Also, there exists a map that converts  a basis of the syzygy module $\Syz(G_i)$ into a basis of the parametric syzygies for $F$ only valid on the semi-algebraic sets $A_i$.
\end{lemma}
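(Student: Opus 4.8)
The plan is to use the fact that, on each branch, $F$ and $G_i$ generate the same ideal, so that each list is expressible in terms of the other through an explicit matrix of coefficients; right multiplication by these matrices then transports syzygies in both directions. This is exactly the mechanism of Lemma~3.7 in \cite{cox2006using}, and the only genuinely parametric point is to keep track of where these coefficient matrices are defined once the parameters $\vk$ are present.

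First I would fix a branch $(A_i,G_i)$ and record two change-of-generators matrices. Collecting the $f_j$ into a column vector $\vf$ and the elements of $G_i$ into a column vector $\vg$, the fact that $G_i$ is produced from $F$ by Buchberger's algorithm (iterated $\QQ[\vk][\vx]$-combinations and reductions) yields a matrix $M$ with $\vg = M\vf$; conversely, since $\sigma_{\valpha}(G_i)$ is a Gr\"obner basis of $\langle \sigma_{\valpha}(F)\rangle$ for $\valpha \in A_i$, each $f_j$ reduces to zero modulo $G_i$ and the division algorithm produces a matrix $N$ with $\vf = N\vg$. After localizing at the product of the leading coefficients appearing in the branch, the entries of $M$ and $N$ lie in $\QQ[\vk][\vx]$; by the very definition of the CGS these leading coefficients do not vanish anywhere on $A_i$, so $M$ and $N$ are well defined on all of $A_i$.

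Next I would define the two conversion maps by right multiplication: send a syzygy $\boldsymbol{t}$ of $F$, viewed as a row vector with $\boldsymbol{t}\,\vf = 0$, to $\boldsymbol{t}N$, and a syzygy $\vs$ of $G_i$ to $\vs M$. The identities $\vg = M\vf$ and $\vf = N\vg$ give $\boldsymbol{t}N\vg = \boldsymbol{t}\vf = 0$ and $\vs M\vf = \vs\vg = 0$, so the first map lands in $\Syz(G_i)$ and the second in $\Syz(F)$, and both are $\QQ[\vk][\vx]$-linear, hence in particular $\QQ$-linear. To see that a generating set is carried to a generating set, I would run the Cox--Little--O'Shea bookkeeping: for $\boldsymbol{t}\in\Syz(F)$ one has $\boldsymbol{t}NM - \boldsymbol{t} = \boldsymbol{t}(NM-I)$, and since $(NM-I)\vf = 0$ the rows of $NM-I$ are themselves syzygies of $F$; augmenting the transported generators by these finitely many correction syzygies (and dually by the rows of $MN-I$ in the other direction) yields generating sets of the target modules, from which the asserted bases are extracted. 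Specialising by $\sigma_{\valpha}$ at $\valpha \in A_i$, where the denominators are invertible, turns $\vs M$ into honest syzygies with entries in $\QQ[\vx]$, and these are precisely the parametric syzygies of $F$ valid on $A_i$.

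The main obstacle, and the reason this is more than a verbatim citation of \cite{cox2006using}, is controlling the denominators carried by $M$ and $N$: reduction against a parametric Gr\"obner basis introduces the leading coefficients of $G_i$ into the denominators, so the transported syzygies are meaningful only where those coefficients are nonzero. The comprehensive Gr\"obner system is exactly the device that resolves this, since each $A_i$ is cut out so that the relevant leading coefficients do not vanish; the conversion is therefore valid branch by branch, which is all that the lemma asserts.
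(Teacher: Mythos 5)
Your proposal is correct and follows essentially the same route as the paper: both arguments rest on the two change-of-generators matrices guaranteed by Lemma 3.7 of Cox--Little--O'Shea (your $M,N$ are the paper's $B^{1,i},B^{2,i}$), transport syzygies by multiplication against these matrices using the identities $\vg=M\vf$ and $\vf=N\vg$, and then specialise at points of $A_i$ to obtain the parametric syzygies. Your explicit bookkeeping via the rows of $NM-I$ is the textbook Proposition 3.8 correction that the paper invokes through the kernel of $B^{1,i}$, so the two write-ups differ only in presentation, not in substance.
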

}
\begin{proof}
    Consider a branch $(A_i,G_i)$ of the given CGS. In the following, we mean by $G_i$ the vector of elements of the Gröbner basis $G_i$ instead of the set $G_i$. One can use the algorithms described in \cite[Chapter 5.3]{cox2006using} to compute a basis of the $\QQ[\vk,\vx]$-module $\Syz(G_i)$ of the ideal generated by the elements of $G_i$.
    If $\boldsymbol{g} \in \Syz(G_i)$, then for every $\boldsymbol{\alpha} \in A_i$, the specialisation $\sigma_{\boldsymbol{\alpha}}(\boldsymbol{g})$ will be a syzygy of the ideal generated by $\sigma_{\boldsymbol{\alpha}}(G_i)$. Following \cite[Chapter 5.3, Lemma 3.7]{cox2006using}, there exist matrices $B^{1,i} $ and $B^{2,i}$ such that
    \[
    G_i B^{1,i}=F^T \quad \mathrm{and} \quad FB^{2,i}= G_i^T. 
    \]
    We obtain that for every $\vg \in \Syz(G_i)$ the product $(B^{2,i} \vg^T)^T$ is a syzygy of $F$. On the other hand for every $\vf \in \Syz(F)$ we have
    $(B^{1,i} \boldsymbol{f}^T)^T  \in \Syz(G_i)$. The linear map $B^{1,i}$ between the syzygy modules $\Syz(F)$  and $\Syz(G_i)$ is not necessarily one-to-one, and one \cor{may} not obtain a basis for $\Syz(F)$ by converting a basis of $\Syz(G_i)$ via $B^{2,i}$. However, the kernel of $B^{1,i}$ will give us the missing elements of a basis of $\Syz(F)$. Moreover, we have
    \[
    G_i \vg^T = (B^{2,i})^T F^T \vg^T =  (B^{2,i})^T F^T B^{1,i} \vf^T = F^T \boldsymbol{f} ,
    \]
    since $F^T=G_i B^{1,i} = (B^{2,i})^T F^T B^{1,i} $.  Applying $\boldsymbol{\sigma}_{\boldsymbol{\alpha}}$ to the equation we obtain
    \[
    0=\boldsymbol{\sigma}_{\boldsymbol{\alpha}}(G_i)
    \boldsymbol{\sigma}_{\boldsymbol{\alpha}}(\boldsymbol{g}) =
    \boldsymbol{\sigma}_{\boldsymbol{\alpha}}(F)
    \boldsymbol{\sigma}_{\boldsymbol{\alpha}}(\boldsymbol{f}),
    \]
    and therefore,
    $ \boldsymbol{\sigma}_{\boldsymbol{\alpha}}(\boldsymbol{f})$ is a syzygy of $\boldsymbol{\sigma}_{\boldsymbol{\alpha}}(F)$ for every $\boldsymbol{\alpha} \in A_i$. Hence, having computed a CGS of the ideal generated by $F$ with branches $(A_i,G_i)$ and bases of the syzygy modules $\Syz(G_i)$, we obtain bases of the parametric syzygies for $F$ only valid on the semi-algebraic sets $A_i$ by applying the conversion matrices described above to the bases elements of $\Syz(G_i)$.
\end{proof}

\cor{
Similar to the above lemma, one can use \cite[Proposition 3.8, Chapter 5.3]{cox2006using} in order to construct a matrix that converts Gr\"obner bases for $\Syz(G_i)$ into a Gr\"obner basis for $\Syz(F)$, respecting the relevant specialisations.
}

\begin{example}
We consider here the ideal generated by the polynomials 
\[
F(\vk,\vx)=(-k_1x_1,-k_2x_2,(k_1+k_2)x_1 x_2).
\]
Computing a CGS of the ideal generated by $F(\vk,\vx)$ we obtain the four branches
\begin{eqnarray*}
A_1 &=& V(\langle 0 \rangle ) \setminus V(\langle k_1 k_2\rangle ) , \ 
G_1 =( k_2 x_2, \ k_1 x_1 ) ,\\
A_2 &=& V(\langle k_2 \rangle ) \setminus V( \langle k_1 \rangle ), \
G_2 = ( k_1 x_1 ), \\
A_3 &=& \{V( \langle k_2,k_1 \rangle) \setminus V(\langle 1 \rangle) \}, \ 
G_3 = (0 ), \\
A_4 &=& V(\langle k_1 \rangle ) \setminus V(\langle k_2 \rangle), \
G_4 = (k_2 x_2 ).
\end{eqnarray*}     
We only look at the first branch which is the generic branch. We compute the conversion matrices 
\[
B^{1,1} = \begin{pmatrix}
    0 & -1 & x_1 (k_1+k_2)/ k_2 \\
    -1 & 0 & 0
\end{pmatrix}
\ \mathrm{and} \
B^{2,1} = \begin{pmatrix}
    0 & -1 \\ -1 & 0  \\ 0 & 0 
\end{pmatrix},
\]
which satisfy $G_1 B^{1,1} = F^T$ and $F B^{2,1} = G_1^T$. 
The Syzygy module of $G_1$ is generated by $\vg=(-k_1x_1,k_2x_2)$. Computing $(B^{2,1} \vg^T)^T$, one obtains the syzygy $\vf_1=(-k_2x_2,k_1x_1,0)$ of $F$. 
The kernel of the matrix $B^{1,1}$ gives us the additional syzygy $\vf_2=(0,k_1x_1+k_2x_1, k_2)$ of $F$. The syzygy module of $F$ is generated by $\vf_1$ and $\vf_2$.
 Alternatively, one can find the whole generating set of  $\Syz(F)$ using \cite[Proposition 3.8]{cox2006using} 
 \[
 \Syz(F) = \langle As_{ij},\vg_1,\dots, \vg_m \rangle.
 \]
\end{example}

\begin{example}
We consider the system 
\[
F(\vk,\vx)=(f_1,f_2,f_3)(x_2+x_1, x_2+x_1+1,k_1 x_2)
\]
This example shows how the system and its syzygies change in the different branches. We have here two branches. 
In the second branch, we have $k_1=0$ and so we only find the trivial syzygy $\vf=(-f_2,f_1,0)$. In the generic branch, the parameter has to satisfy $k_1\neq 0$. Here we find more and non-trivial syzygies, namely   
\[
\vf_1=(-k_1 x_1-k_1, k_1 x_1, 1) \ \mathrm{and} \ \vf_2= (-k_1 x_2,k_1 x_2,-1).
\]
\end{example}

\begin{example}
We consider the system 
\[ F(\vk,\vx) = (x_1+k_1 x_2+k_2,k_1x_1+x_2+k_2 ).\]
As in the example above we want to see how the system and its syzygies change in the different branches. 
  In the first branch, that is the generic one, the parameters have to satisfy $k_1\neq0$, $k_2 \neq 0$ and we find that the syzygy module is generated by the trivial syzygy $\vg=(k_1x_1+x_2+k_2,-x_1-k_1x_2-k_2)$. In the second branch the parameters are $k_1=-1$, $k_2\ne 0$ and so the system reduces to  
 \[ F(\vk,\vx) = (x_1-x_2+k_2, -x_1+x_2+k_2 ).\] 
 Again we only find the nonzero trivial syzygy here. In the third branch, the parameters have values 
$k_1=1$, $k_2=0$ which reduces our system to 
\[ F(\vk,\vx) = (x_1 +x_2,x_1+x_2  ).\] 
Here the syzygy module will be generated by $\vf=(1,-1)$.
\end{example}

\subsubsection{The Connection between Syzygies and Conservation Laws}\label{sub:connectionSyzygiesadnConservationlaws}
In this section, we consider system \eqref{eq:fi}, that is the system of ODEs of the form
\begin{equation*}
  \dot x_1 = f_1(\vk,\vx), \, \ldots, \, \dot x_n = f_n(\vk,\vx) \in
  \ZZ[\vk,\vx]. 
\end{equation*}
 We will explain how a parametric conservation law of this ODE system leads to a syzygy of the ideal generated by $F=(f_1(\vk,\vx), \, \ldots, \, f_n(\vk,\vx))$ and how one can obtain parametric conservation laws from  specific elements of $\Syz(F)$. 

\cor{
We remind that the curl of the vector field $\vg=(g_1(\vk,\vx),\dots,g_n(\vk,\vx))$ is defined as the anti-symmetric tensor field $\curl \boldsymbol{g}$ with entries
\[
\DP{g_i(\vk,\vx)}{x_j} - \DP{g_j(\vk,\vx)}{x_i}  \quad \mathrm{with} \ 1 \leq i,j\leq n .
\]

\begin{definition}
\begin{enumerate}
    \item  For the dynamical system \ref{eq:fi} and its right hand side $F$ we define the set of conservative syzygies as 
    \[
    \Syzgrad(F):=\{ \vg \in \Syz(F) \mid \curl \vg =0 \}.
    \]
    \item For a positive integer $d \in \NN$ we define the set of conservative syzygies of degree at most $d$ as 
    \[
    \Syzgrad(F)_d:=\{ \vg \in \Syz(F)_d \mid \curl \vg =0 \}.
    \]
    \item We define $\ConsLaws \subset \QQ[\vk,\vx]$ to be the set of polynomial conservation laws of $F$. 
\end{enumerate}
\end{definition}

\begin{proposition}\label{prop:cons-syz1}
There is a one-to-one correspondence between conservation laws and conservative syzygies:
$$
\varphi_1 :\ConsLaws \rightarrow  \Syzgrad(F), \quad  \phi(\vk,\vx) \mapsto \grad \phi(\vk,\vx)   \quad and \quad
\varphi_2: \Syzgrad(F) \rightarrow \ConsLaws , \quad \vg  \mapsto \int \vg 
$$
with $\varphi_1 \circ \varphi_2 =id_{\Syzgrad(F)}$ and $ \varphi_2 \circ \varphi_1=id_{\ConsLaws}$, where $\int\vg$ denotes the potential of a conservative vector field $\vg \in \Syzgrad(F)$.
\end{proposition}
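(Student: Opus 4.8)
The plan is to verify that $\varphi_1$ and $\varphi_2$ are each well defined with image in the stated target, and then that they compose to the identity in both orders; the entire argument rests on the defining equation of a conservation law together with the polynomial Poincar\'e lemma.

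First I would check that $\varphi_1$ is well defined. Let $\phi \in \ConsLaws$. Its gradient $\grad\phi = (\DP{\phi}{x_1},\dots,\DP{\phi}{x_n})$ has entries in $\QQ[\vk,\vx]$, so it lies in $\QQ[\vk,\vx]^n$. By the definition of a polynomial conservation law, $\sum_{i=1}^n \DP{\phi}{x_i} f_i = 0$, which is exactly the statement that $\grad\phi \in \Syz(F)$. Moreover the $(i,j)$ entry of $\curl(\grad\phi)$ is $\frac{\partial^2\phi}{\partial x_j\,\partial x_i} - \frac{\partial^2\phi}{\partial x_i\,\partial x_j} = 0$ by equality of mixed partials, so $\curl(\grad\phi)=0$ and $\grad\phi \in \Syzgrad(F)$.

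Next I would establish that $\varphi_2$ is well defined, which is the crux. Let $\vg=(g_1,\dots,g_n) \in \Syzgrad(F)$, so that $\DP{g_i}{x_j}=\DP{g_j}{x_i}$ for all $i,j$. The key point is that $\vg$ admits a \emph{polynomial} potential: since $\RR^n$ is star-shaped, the homotopy formula
\[
\phi(\vk,\vx) := \int_0^1 \sum_{i=1}^n x_i\, g_i(\vk, t\vx)\, dt
\]
yields a function with $\grad\phi = \vg$, and because each $g_i$ is polynomial in $\vx$ the integrand is a polynomial in $t$ whose integral introduces only rational coefficients, so $\phi \in \QQ[\vk,\vx]$. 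Finally $\sum_{i=1}^n \DP{\phi}{x_i} f_i = \sum_{i=1}^n g_i f_i = 0$ because $\vg \in \Syz(F)$, hence $\phi$ is a polynomial conservation law and $\varphi_2(\vg)=\phi \in \ConsLaws$.

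It remains to check that the two maps are mutually inverse. For $\varphi_1 \circ \varphi_2$: given $\vg \in \Syzgrad(F)$, the potential $\phi=\varphi_2(\vg)$ satisfies $\grad\phi=\vg$ by construction, so $\varphi_1(\varphi_2(\vg))=\vg$. For $\varphi_2 \circ \varphi_1$: given $\phi \in \ConsLaws$, both $\varphi_2(\grad\phi)$ and $\phi$ have gradient $\grad\phi$, hence differ by an element of the kernel of $\grad$ in $\vx$, i.e.\ by a summand in $\QQ[\vk]$ depending only on the parameters. This additive ambiguity is the one genuine obstacle to an exact bijection, since a conservation law may carry an arbitrary $\vx$-free term. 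I would remove it by the standard normalization: either regard $\ConsLaws$ modulo the trivial conservation laws $\QQ[\vk]$, or take $\varphi_2(\vg)$ to be the unique potential with vanishing $\vx$-free part---which the homotopy formula already delivers, since it gives $\phi(\vk,\boldsymbol{0})=0$. With this convention $\varphi_2(\grad\phi)=\phi$, and $\varphi_1,\varphi_2$ are mutually inverse bijections.
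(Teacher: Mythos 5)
Your proof is correct and follows essentially the same route as the paper's: well-definedness of $\varphi_1$ via equality of mixed partials, construction of a polynomial potential for a curl-free syzygy by path integration (you use the radial homotopy formula over a star-shaped domain where the paper integrates along coordinate axes from the origin, but both are the Poincar\'e lemma and both fix the integration constant to zero), and then the two composition identities. The one substantive addition is that you explicitly flag the additive-constant ambiguity --- $\varphi_2\circ\varphi_1$ recovers $\phi$ only up to its $\vx$-free part in $\QQ[\vk]$, so the claimed bijection fails literally on constant conservation laws --- a point the paper's proof passes over with ``$\int\grad\phi=\phi$''; your normalization (or quotienting by $\QQ[\vk]$) is the correct repair.
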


\begin{proof}
First, we show that $\varphi_1$ is well-defined. Let $\phi(\vk,\vx) \in \ConsLaws$, that is, let $\phi(\vk,\vx) \in \QQ[\vk,\vx]$ be a parametric exact polynomial conservation law  for \eqref{eq:fi} with \cor{semi-algebraic} set $V_{\vk}$ for the parameters $\vk$.    We fix this semialgebraic set throughout the proof and ignore $k$. By Definition~\ref{def:exactandapproxcons}, we have
$$
\sum_{i=1}^n \DP{\phi}{x_i}(\vk,\vx) f_i(\vk,\vx) = 0 , \quad \forall \vk \in V_{\vk}, \ \forall \vx \in \RR^n , 
$$ 
which implies that
$$
\left(\DP{\phi}{x_1}(\vk,\vx), \, \dots, \, \DP{\phi}{x_n}(\vk,\vx) \right) \in \Syz(F).
$$
In other words, this means that the gradient
$$
\grad \phi(\vk,\vx) =\left(\DP{\phi}{x_1}(\vk,\vx), \, \dots, \, \DP{\phi}{x_n}(\vk,\vx) \right)
$$ 
of a parametric polynomial conservation law $\phi(\vk,\vx)$ is a syzygy of the ideal generated by $F$ when $\vk \in V_{\vk}$. Moreover, by well-known identities of gradient and curl, we have that 
\[
\curl (\grad \phi) =0
\]
and so $ \grad \phi \in \Syzgrad(F)$. This shows that $\varphi_1$ is well-defined.

Second, we show that $\varphi_2$ is well-defined.  Assume that  $\vg=(g_1,\dots,g_n) \in \Syzgrad (F)$ is a parametric syzygy of the ideal generated by $F$ with semi-algebraic set $V_{\vk}$ obtained from the comprehensive Gröbner system described at the end of subsection~\ref{subsec:cgs}. Therefore, 
$$f_1(\vk,\vx) g_1(\vk,\vx) + \dots + f_n(\vk,\vx) g_n(\vk,\vx)=0$$ for points $\vk \in V_{\vk}$.  

It is well-known in vector calculus that a potential function $\phi$ for  $\vg$ as a vector field, is well-defined in a simply connected domain $\mathcal{D}\subset \RR^n$ if only if $\curl \vg = 0$ everywhere in $\mathcal{D}$, i.e.,~only when $\vg$ defines a conservative (or irrotational) vector field in $\mathcal{D}$ (see \cite{spivak2018calculus}). In this case, $\phi$ satisfies
\[
\grad \phi(\vk,\vx) = \vg.
\]



By well-known vector calculus method, one can integrate $\vg$ along a suitable path to obtain the corresponding potential function. For the sake of self-containedness, we explain one possible integration below. Consider the differential form $d\vg=g_1dx_1+\dots+g_ndx_n$ and take a piecewise smooth curve connecting the origin with $\vx$, for instance
\begin{gather*}
  \mathcal{C}=\{ (s x_1,0,\dots,0) \mid s\in [0,1]\} \cup \\ \{
  (x_1,sx_2,0,\dots,0) \mid s\in [0,1]\} \cup \dots \cup \{
  (x_1,\dots,x_{n-1},sx_n) \mid s\in [0,1]\}.
\end{gather*}
Integrating the differential form $d\boldsymbol{g}$ along the curve
$\mathcal{C}$ yields
\begin{equation}\label{eq:integ-syz1}
  \int_0^{x_1}g_1(\cdot,0,...,0)dx_1
  +\int_0^{x_2}g_2(x_1,\cdot,0,...,0)dx_2 +
  \dots+\int_0^{x_n}g_n(x_1,...,x_{n-1},\cdot)dx_n,
\end{equation}
where the constant of the integration is assumed to be zero. We denote this integration (with constant being zero) by $\int \vg$.
%
Denote that if $\curl \boldsymbol{g} \neq 0$,
the result of the integration depends on the path $\mathcal{C}$ and
there is no function $\phi(\vk,\vx)$ satisfying
$\grad \phi(\vk,\vx) = \boldsymbol{g}$ for all $\vx \in \RR^n$. 

Therefore, if $\vg \in \Syzgrad (F)$, then integration implies that $\int \vg $ is an element of $\ConsLaws$ as we showed above. This shows that the definition of the map $\varphi_2$ is correct. 

Having shown that both $\varphi_1$ and $\varphi_2$ are well-defined, we just need to show that $\varphi_1$ and $\varphi_2$ are inverse to each other. 
The identity $ \varphi_2 \circ \varphi_1=id_{\ConsLaws}$ follows from $\int \grad \phi = \phi$. Moreover, 
since $\grad \int \vg = \vg$, we have $\varphi_1 \circ \varphi_2 =id_{\Syzgrad(F)}$. 
\end{proof}

\begin{remark}
In vector calculus, there are other standard methods to compute from a conservative syzygy
$\vg=(g_1(\vk,\vx),\dots,g_n(\vk,\vx))$, the corresponding conservation law $\phi(\vk,\vx)$. One can first start with integrating $g_n$ with respect to $x_n$, obtaining 
\[
\phi(\vk,\vx) = \int_{x_n}g_n(\vk,\vx) dx_n + c_{n-1}(\vk,x_1,\dots,x_{n-1}),
\]
where the integration constant $c_{n-1}(\vk,x_1,\dots,x_{n-1})$ is in $\QQ[\vk,x_1,\dots, x_{n-1}]$. Let $p_n:=\int_{x_n}g_n(\vk,\vx) dx_n$ where we choose the integration constant to be zero.
Then in order to compute $c_{n-1}(\vk,x_1,\dots,x_{n-1})$, one can use the fact that $\DP{\phi(\vk,\vx)}{x_{n-1}} = g_{n-1}(\vk,\vx)$ and obtain an equation for $\DP{c_{n-1}(\vk,x_1,\dots,x_{n-1})}{x_{n-1}}$, namely 
\[
\DP{\phi(\vk,\vx)}{x_{n-1}} = \DP{p_n(\vk,\vx)}{x_{n-1}} +\DP{c_{n-1}(\vk,x_1,\dots,x_{n-1})}{x_{n-1}}.
\]
Replacing $\DP{\phi(\vk,\vx)}{x_{n-1}}$ with $g_{n-1}(\vk,\vx)$ one obtains
\[
g_{n-1}(\vk,\vx)= \DP{p_n(\vk,\vx)}{x_{n-1}} +\DP{c_{n-1}(\vk,x_1,\dots,x_{n-1})}{x_{n-1}}.
\]
As we know the polynomials $g_{n-1}(\vk,\vx)$ and $\DP{p_n}{x_{n-1}}$, one can obtain 
equations for the coefficients of $\DP{c_{n-1}(\vk,x_1,\dots,x_{n-1})}{x_{n-1}}$. Continuing the above procedure by taking partial derivatives of $\phi(\vk,\vx)$ with respect to the variables $x_{n-2}$, replacing it with $g_{n-2}(\vk,\vx)$, one can obtain more equations for the coefficients of the integration constant $c_{n-1}(\vk,x_1,\dots,x_{n-1})$. As $\vg(\vk,\vx)$ is a conservative syzygy, the existence of a conservation law $\phi(\vk,\vx)$ such that $\grad\phi(\vk,\vx) = \vg(\vk,\vx)$ is guaranteed, which implies that the equations obtained for the coefficients of $c_{n-1}(\vk,x_1,\dots,x_{n-1})$ yields a unique solution.   

An alternative method for integration is considering the following integration
  \begin{equation}
    \phi = \int g_1 dx_1 + \dots \int g_n dx_n + C,
\end{equation}
and then removing the duplicated terms that show up in the integrals.
\end{remark}
}

\begin{example}\label{ex:cons-syzygy}
Consider the trivial syzygy $\vg=(-x_2,x_1)$ for the ideal in $\QQ[x_1,x_2]$ generated by $F=( x_1,x_2 )$.  As the vector field
arising from this particular syzygy is not irrotational, i.e.,
$\curl \vg \ne 0$, one cannot obtain a parametric polynomial
conservation law from this syzygy. But there are generators of the same ideal
with a trivial syzygy which is the gradient of a parametric
conservation law. For example, consider the generators
$F=( x_1,-x_2 )$ with the trivial syzygy
$\vg=(x_2,x_1)$. Then $\curl \vg =0$, which means that the trivial syzygy is
irrotational and so it is the gradient of a parametric conservation
law, which is $x_1 x_2 - c$, where $c$ is a constant.  
\end{example}

\begin{example}[Linear Conservation Laws]
  Assume that $F$ admits a degree zero syzygy $\vg$. As the entries of a degree
  zero syzygy are constants, we have $\curl \vg =0$ and so a degree zero syzygy is conservative. This means that $\Syzgrad (F)_0 = \Syz(F)_0$.
  Conservation laws corresponding to degree zero syzygies are the well-known linear conservation laws.
\end{example}
\begin{example}
 A syzygy of the form $\vg=(x_1^{\alpha_1},\dots, x_n^{\alpha_n})$ is
  obviously conservative and leads to the conservation law
  \[ \phi(\vx)= \alpha_1^{-1}x_1^{\alpha_1 +1}+\dots+  \alpha_n^{-1}x^{\alpha_n+1}.\]  
\end{example}

\begin{remark}\label{rem:algstructure}
  \begin{enumerate}
  \item First we want to point out that $\ConsLaws$ is not necessarily an ideal in the ring $\QQ[\vk,\vx]$. Indeed, let $\phi(\vk,\vx) \in \ConsLaws$, $\phi(\vk,\vx) \neq 0$ and 
  $h \in \QQ[\vk,\vx]$. Then by the chain rule, we have
  $$ 
  \grad(h \phi) = h \grad \phi(\vk,\vx) + \phi(\vk,\vx) \grad h.
  $$
  Since $\phi(\vk,\vx)$ is a conservation law, that is $\grad \phi(\vk,\vx) \cdot F= 0$, we obtain that 
  $$  \grad(h \phi(\vk,\vx)) \cdot F= ( h \grad \phi + \phi(\vk,\vx) \grad h)\cdot F = \phi(\vk,\vx) (\grad h \cdot F)  $$
  and $\grad h \cdot F$ is in general not zero as the following example shows. In Example~\ref{ex:cons-syzygy}, for the generating set $F=(x_1,-x_2)$, we found the conservation law $\phi(\vx)=x_1x_2$. One easily checks that the product $x_1 \phi(\vx)$ is not a conservation law.
  \item \label{rem:algstructurepoint2} The set $\Syzgrad(F)$ is not necessarily a $\QQ[\vk,\vx]$-submodule of the $\QQ[\vk,\vx]$-module
   $\Syz(F)$. Indeed, for $h\in \QQ[\vk,\vx]$ and $\vg \in \Syzgrad(F)$ we obtain that
    \[
      \curl (h \vg) = h (\curl \vg) + \grad h \times \vg,
    \]
    where we mean by $\boldsymbol{u} \times \vv$ for two vectors $\boldsymbol{u}=(u_1,\dots, u_n)$ and $ \vv=(v_1,\dots,v_n)$ the vector with entries 
    \[
     u_iv_j-u_jv_i , \quad \mathrm{with} \ 1 \leq i, j \leq n, \ i \neq j.
    \]
     As $\vg \in \Syzgrad(F)$, we have that $\curl \vg =0$, however, not
    necessarily $\grad h \times \vg=0$ as the example at the end shows.
     Therefore, $\Syzgrad(F)$ is not
    necessarily closed under multiplication by the elements of
    $\QQ[\vk,\vx]$, which implies that $\Syzgrad(F)$ is not necessarily a $\QQ[\vk,\vx]$-submodule of
    $\Syz(F)$.

     As a counter-example, consider again the generating set 
     $F=(x_1,-x_2)$ in Example~\ref{ex:cons-syzygy}. It yields the conservative syzygy $\vg=(x_2,x_1)$. However, one easily checks that the product $x_1\vg$ is not a conservative syzygy, that is $\curl (x_1\vg) \neq 0$.
  \end{enumerate} 
\end{remark}

\begin{remark}\label{rmk:ConsSyzNotModule}
    We want to point out that by Remark \ref{rem:algstructure}, \ref{rem:algstructurepoint2} it is not sufficient to check only the elements of a basis of the syzygies module $\Syz(F)$ to be conservative to obtain from the ones which are conservative all of $\Syzgrad (F)$, since $\Syzgrad (F)$ has not the structure of a $\QQ[\vk,\vx]$-module.
\end{remark}

\begin{proposition}\label{prop:DegdConsSyzVecSpace}
    The set $\Syzgrad(F)$ is a $\QQ$-vector space.
\end{proposition}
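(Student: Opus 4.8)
The plan is to exhibit $\Syzgrad(F)$ as the intersection of two $\QQ$-vector subspaces of $\QQ[\vk,\vx]^n$ and then to invoke the elementary fact that an intersection of subspaces is again a subspace. The two subspaces will be $\Syz(F)$ itself and the kernel of the curl operator, and the whole argument rests on the observation that the curl, while not a module homomorphism, is $\QQ$-linear.

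First I would record that $\Syz(F)$ is a $\QQ$-vector space. We already noted that it is a $\QQ[\vk,\vx]$-submodule of $\QQ[\vk,\vx]^n$; since $\QQ \subseteq \QQ[\vk,\vx]$, restriction of scalars turns this module into a $\QQ$-vector space, in particular it is closed under addition and under multiplication by rational constants. The key step is then to verify that the map $\vg \mapsto \curl \vg$ is $\QQ$-linear on $\QQ[\vk,\vx]^n$. Each entry of $\curl \vg$ has the form $\DP{g_i}{x_j} - \DP{g_j}{x_i}$, and partial differentiation is additive and commutes with multiplication by constants of $\QQ$; hence for $c_1, c_2 \in \QQ$ and $\vg_1, \vg_2 \in \QQ[\vk,\vx]^n$ we have $\curl(c_1 \vg_1 + c_2 \vg_2) = c_1\,\curl \vg_1 + c_2\,\curl \vg_2$. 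Consequently $\ker(\curl) = \{\vg \mid \curl \vg = 0\}$ is a $\QQ$-subspace of $\QQ[\vk,\vx]^n$. Since by definition $\Syzgrad(F) = \Syz(F) \cap \ker(\curl)$, it is the intersection of two $\QQ$-subspaces and therefore itself a $\QQ$-vector space.

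There is no genuine obstacle in this argument; the only point worth stressing is the contrast with Remark~\ref{rem:algstructure}, \ref{rem:algstructurepoint2}, where the same set was shown to fail to be a $\QQ[\vk,\vx]$-module. The reason is exactly visible in the product rule $\curl(h\vg) = h\,\curl\vg + \grad h \times \vg$: the obstruction term $\grad h \times \vg$ vanishes precisely when $\grad h = 0$, i.e.~when $h$ is a constant of $\QQ$. This is why $\QQ$-linearity of the curl survives while $\QQ[\vk,\vx]$-linearity does not, and it explains why the vector-space structure is the correct one to work with, as anticipated in Remark~\ref{rmk:ConsSyzNotModule}.
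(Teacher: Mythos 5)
Your proposal is correct and rests on exactly the same key observation as the paper's own proof, namely the $\QQ$-linearity of the $\curl$ operator; packaging $\Syzgrad(F)$ as the intersection $\Syz(F)\cap\ker(\curl)$ is only a cosmetic reformulation of the paper's direct verification that $\curl(c\vg_1+\vg_2)=0$ for $\vg_1,\vg_2\in\Syzgrad(F)$ and $c\in\QQ$. Your closing observation that the obstruction term $\grad h\times\vg$ in the product rule vanishes precisely for constant $h$ is a nice explicit link to Remark~\ref{rem:algstructure}, but it is not needed for the proof itself.
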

\begin{proof}
    It is easy to check that the operator $\curl (\cdot)$ is $\QQ$-linear, that is for $c\in \QQ$ and $\vg_1$, $\vg_2 \in \QQ[\vk,\vx]^n$ we have
    \[
    \curl (c \vg_1 +  \vg_2) = c \curl \vg_1 +  \curl \vg_2 .
    \]
    Thus, if $\vg_1$, $\vg_2 \in \Syzgrad (F)$, then $\curl (c \vg_1 +  \vg_2)=0$ and so $\Syzgrad (F)$ is a $\QQ$-vector space.  
\end{proof}

\cor{
We will use the $\QQ-$vector space structure of $\Syzgrad(F)$ for computing conservation laws. Apart from this, the conservation laws form an algebra. Although this is well-known in the literature on first integrals (e.g., see Arnold's book~\cite[\S 10.3]{Arnold-ode-book}), we present it here and detail the proof, not only for the sake of self-containedness, but also because we will refer to the algebra structure of conservation laws and discuss its generators when we present our benchmark results.
}

\begin{proposition}\label{prop:alg-cons1}
The set $\ConsLaws$ is a $\QQ$-subalgebra of $\QQ[\vk,\vx]$.
\end{proposition}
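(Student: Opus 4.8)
The plan is to recognize $\ConsLaws$ as the kernel of a single linear operator on $\QQ[\vk,\vx]$ and to exploit that this operator is a derivation. Concretely, I would define the directional (Lie) derivative along the vector field $F$,
\[
D_F \colon \QQ[\vk,\vx] \to \QQ[\vk,\vx], \qquad D_F(\phi) := \sum_{i=1}^n f_i(\vk,\vx)\, \DP{\phi}{x_i} = \grad\phi \cdot F,
\]
where the gradient and the partial derivatives are taken only with respect to $\vx$. By Definition~\ref{def:exactandapproxcons}, a polynomial $\phi \in \QQ[\vk,\vx]$ lies in $\ConsLaws$ exactly when $D_F(\phi)=0$ on the relevant semi-algebraic parameter set. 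Thus $\ConsLaws = \ker D_F$, and the claim reduces to the standard fact that the kernel of a derivation of a commutative $\QQ$-algebra is a $\QQ$-subalgebra.

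First I would check that $D_F$ is $\QQ$-linear, which is immediate since each $\DP{}{x_i}$ is $\QQ$-linear and multiplication by $f_i$ is $\QQ$-linear; this already shows $\ConsLaws$ is a $\QQ$-vector subspace. Next I would verify the Leibniz rule: for $\phi,\psi \in \QQ[\vk,\vx]$,
\[
D_F(\phi\psi) = \sum_{i=1}^n f_i \DP{(\phi\psi)}{x_i} = \phi \sum_{i=1}^n f_i \DP{\psi}{x_i} + \psi \sum_{i=1}^n f_i \DP{\phi}{x_i} = \phi\, D_F(\psi) + \psi\, D_F(\phi),
\]
which follows from the product rule for partial derivatives. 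Hence if $\phi,\psi \in \ker D_F$ then $D_F(\phi\psi)=0$, so $\ConsLaws$ is closed under multiplication. Finally, since $\DP{c}{x_i}=0$ for every $c \in \QQ$ (indeed for every $c \in \QQ[\vk]$), we have $D_F(c)=0$, so $\QQ \subseteq \ConsLaws$; in particular $1 \in \ConsLaws$. Together these give that $\ConsLaws$ is a $\QQ$-subalgebra, and in fact a $\QQ[\vk]$-subalgebra, since $\vk$ is inert under the $\vx$-derivations.

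The only point requiring care --- and the closest thing to an obstacle --- is the bookkeeping of the semi-algebraic parameter sets attached to parametric conservation laws: if $\phi$ is conserved on $V_{\vk}$ and $\psi$ on $V'_{\vk}$, then $c\phi+\psi$ and $\phi\psi$ are conserved only on $V_{\vk}\cap V'_{\vk}$, which must be non-empty for the result to again satisfy Definition~\ref{def:exactandapproxcons}. This is harmless in our setting because, exactly as in the proof of Proposition~\ref{prop:cons-syz1}, we fix a single semi-algebraic set (a branch of the comprehensive Gröbner system) throughout, so all the laws under consideration share the same $V_{\vk}$ and the derivation computation above is an identity valid on that common set. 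With this convention the derivation argument goes through verbatim and establishes the proposition.
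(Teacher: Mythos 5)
Your proof is correct and is essentially the same argument as the paper's: both reduce the claim to the product rule applied to $\grad(\phi_1\phi_2)\cdot F$ together with the linearity of the gradient, your version merely packaging this as the statement that $\ConsLaws$ is the kernel of the derivation $D_F$. Your closing remark about intersecting the semi-algebraic sets $V_{\vk}$ is a reasonable extra precaution that the paper handles implicitly by working within a fixed branch.
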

\begin{proof}
 Let $\phi_1(\vk,\vx)$, $\phi_2(\vk,\vx) \in \ConsLaws$ be two conservation laws. The chain rule implies 
    that
    $$\grad(\phi_1(\vk,\vx) \phi_2(\vk,\vx) ) = \phi_2(\vk,\vx) \grad \phi_1(\vk,\vx) +
    \phi_1(\vk,\vx) \grad \phi_2(\vk,\vx).$$ Moreover, since  $\phi_1(\vk,\vx)$, $\phi_2(\vk,\vx)$ are conservation laws, we have that 
    $\grad \phi_1(\vk,\vx) \cdot F = \grad \phi_2(\vk,\vx) \cdot F = 0$ and so we obtain
    \begin{align*}
      \grad(\phi_1(\vk,\vx)\phi_2(\vk,\vx))\cdot F  & =
                                    \left( \phi_2\grad\phi_1(\vk,\vx) +
                                    \phi_1(\vk,\vx)\grad\phi_2(\vk,\vx) \right) \cdot
                                    F \\
                                  & = \left( \phi_2(\vk,\vx)\grad\phi_1(\vk,\vx) \cdot
                                    F \right) + \left( 
                                    \phi_1(\vk,\vx)\grad\phi_2(\vk,\vx) \cdot
                                    F \right) \\
                                  & = 0 + 0.       
    \end{align*}
 So the product of two conservation laws
    is a conservation law is again a conservation law. Since $\grad$ is additive, $\phi_1(\vk,\vx) + \phi_2(\vk,\vx)$ is again a conservation law. Finally, for $c \in \QQ$ we have 
    \[
    \grad (c \phi_1(\vk,\vx)) \cdot F = c (\grad\phi_1(\vk,\vx) \cdot F) =0,
    \]
    that is $c \phi_1(\vk,\vx)$ is again a conservation law.
    This implies that $\ConsLaws$ is a
    $\QQ$-subalgebra of $\QQ[\vk,\vx]$.
\end{proof}

\subsubsection{Computing Conservation Laws from Syzygies}\label{sub:ComputingConservationLawsfromSyzygies}

As pointed out in Remark \ref{rmk:ConsSyzNotModule},  conservative syzygies do not form a $\QQ[\vk,\vx]$-module, and hence, conservative syzygies cannot be generated by the conservative elements of a basis for the syzygy module. However, we have seen  in Proposition \ref{prop:DegdConsSyzVecSpace} that conservative syzygies  form a $\QQ$-vector space. Restricting to conservative syzygies up to degree $d$, one can compute a finite basis for the vector space $\Syzgrad (F)_d$ via a basis of the syzygy module $\Syz(F)_d$ by the $\QQ$-linearity of 
$\curl (\cdot)$.

\begin{proposition}
  \begin{enumerate}
  \item  
  The set 
  $$V_{\grad}(F) = \{\curl \boldsymbol{f} \mid \boldsymbol{f} \in \Syz(F) \} $$ 
  is a $\QQ$-vector space. 
    \item The curl induces a surjective $\QQ$-linear map 
  \[
  \curl (\cdot): \Syz(F) \rightarrow V_{\grad}(F), \ 
  \boldsymbol{f} \mapsto \curl \boldsymbol{f}
  \]
    with 
    \[
    \mathrm{ker}( \curl (\cdot)) = \Syzgrad (F).
    \]
  \end{enumerate}
\end{proposition}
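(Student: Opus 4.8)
The plan is to derive both parts directly from two facts already in hand: that $\Syz(F)$ is a $\QQ$-vector space, and that $\curl (\cdot)$ is $\QQ$-linear, the latter being exactly what was checked in the proof of Proposition~\ref{prop:DegdConsSyzVecSpace}. The organising idea is to regard the curl as a single $\QQ$-linear map out of $\Syz(F)$ into the ambient space of polynomial antisymmetric tensor fields, so that $V_{\grad}(F)$ is its image and $\Syzgrad(F)$ is its kernel; both claims then become formal consequences of elementary linear algebra.

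For part (1), I would first pin down the codomain. The curl sends a vector field $\boldsymbol{f}=(f_1,\dots,f_n)\in\QQ[\vk,\vx]^n$ to the antisymmetric tensor field whose entries are $\DP{f_i}{x_j}-\DP{f_j}{x_i}$, and the collection of all such tensor fields is a $\QQ$-vector space, being a free $\QQ[\vk,\vx]$-module. Because $\Syz(F)$ is a $\QQ$-vector space and $\curl (\cdot)$ is $\QQ$-linear, the image $V_{\grad}(F)=\{\curl \boldsymbol{f}\mid\boldsymbol{f}\in\Syz(F)\}$ is a linear subspace of this codomain, hence itself a $\QQ$-vector space. That is precisely part (1).

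For part (2), surjectivity of $\curl (\cdot)\colon\Syz(F)\to V_{\grad}(F)$ is immediate, since $V_{\grad}(F)$ is defined to be the image. The $\QQ$-linearity is the identity $\curl (c\boldsymbol{f}_1+\boldsymbol{f}_2)=c\,\curl \boldsymbol{f}_1+\curl \boldsymbol{f}_2$ for $c\in\QQ$ established in Proposition~\ref{prop:DegdConsSyzVecSpace}. Finally, unwinding definitions gives $\mathrm{ker}(\curl (\cdot))=\{\boldsymbol{f}\in\Syz(F)\mid\curl \boldsymbol{f}=0\}$, which is exactly $\Syzgrad(F)$. I do not anticipate a genuine obstacle: the argument is essentially bookkeeping. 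The one place warranting a line of care is making explicit that $V_{\grad}(F)$ sits inside a fixed ambient $\QQ$-vector space as a subspace, rather than being merely a set abstractly closed under the operations, so that the phrase ``$\QQ$-vector space'' is unambiguous; the image-of-a-linear-map description supplies this automatically.
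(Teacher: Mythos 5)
Your proof is correct and follows essentially the same route as the paper: both argue that the image of the $\QQ$-linear map $\curl(\cdot)$ on the $\QQ$-vector space $\Syz(F)$ is a $\QQ$-vector space, that surjectivity holds by definition of the image, and that the kernel is $\Syzgrad(F)$ by unwinding definitions. Your extra remark pinning down the ambient codomain of antisymmetric tensor fields is a small clarification the paper leaves implicit, but it does not change the argument.
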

\begin{proof}
  \begin{enumerate}
  \item 
 Since $\Syz(F)$ is a $\QQ$-vector space and $\curl (\cdot)$ is a $\QQ$-linear operator, its image is a $\QQ$-vector space.  
 \item The map $\curl (\cdot)$ is clearly $\QQ$-linear and surjective. Obviously every element of  $\mathrm{ker}( \curl (\cdot))$ is an element of $\Syzgrad (F)$ and every syzygy $\vg$ with $\curl \vg=0$ lies in the kernel.
\end{enumerate}
\end{proof}

We remind that a basis for the $\QQ$-vector space of syzygies up to a given degree bound $d \in \NN$ can be computed as in Remark~\eqref{rmk:DegdSzygyVecSpace}. Then one can apply the curl operator and compute its kernel in order to obtain a basis for the conservative syzygies up to degree $d$.

 We combine now our results into Algorithm~\ref{alg:pol-cons} which computes for the right-hand side $F(\vk,\vx)$ of an ODE system a basis of the $\QQ$-vector space of parametric conservation laws up to a given degree $d$.

\begin{algorithm}[ht!]
\begin{algorithmic}[1]
  \caption{\label{alg:pol-cons}$\operatorname{ParametricPolynomialConservationLawsViaSyzygies}$
}  
  \REQUIRE A polynomial ODE system
  $F(\vk,\vx)=(f_1(\vk,\vx),\dots,f_n(\vk,\vx))$ whose
  r.h.s.~$f_i(\vk,\vx)$ are polynomials in $\QQ[\vk,\vx]$ and a degree bound $d$. 
    \smallskip
    \ENSURE
    A set of pairs
    \[
      \ParPolConsLaws=\{ (V_{\vk,1},\ConsLaws_{1}),\dots
      ,(V_{\vk,l},\ConsLaws_{l})   \},
    \]
    where $V_{\vk,i} \subseteq \QQ_+^r$ is a semi-algebraic set and
    $\ConsLaws_{i}$ is a basis of the vector space of parametric polynomial conservation laws 
    of degree at most $d$ valid for all $\vk \in V_{\vk,i}$. 
    \STATE{$\ParPolConsLaws:=\emptyset$}
    \STATE{$I:=\Ideal(f_1(\vk,\vx),\dots,f_n(\vk,\vx))$ in $\QQ[\vk,\vx]$}
    \STATE{Compute a
      CGS $\mathcal{G}:=\{(A_1,G_1),\dots, (A_l,G_l)\}$ of $I$}
  \FOR{$i=1, \dots, l$}
       \STATE{$\ConsLaws_{i}:=\emptyset$}
    \STATE{Compute a basis $B(\Syz(G_i))$ of the syzygy
      module $\Syz(G_i)$ of $G_i$}
    \STATE{Convert $B(\Syz(G_i))$ into a basis $B(\Syz(F))$ of the parametric syzygies $\Syz(F)$ of $F$ valid for $A_i$.}
      \STATE{Compute a basis $B(\Syz(F)_d)$ of the generating set 
      \[\mathrm{Gen}(\Syz(F)_d)=\{ u \vg \mid u \in [\vk,\vx], \ \vg \in B(\Syz(F)), \ \deg(u g_i) \leq d \} \] of the $\QQ$-vector space $\Syz(F)_d$.}
       \STATE{Use $B(\Syz(F)_d)$ to compute a basis $B$ of the kernel of the restriction of $\curl (\cdot)$ to $\Syz(F)_d$.} 
    \FOR{ $\boldsymbol{b}=(b_1,\dots,b_n) \in B$}
    \STATE{ 
      $\phi(\vk,\vx):= \int \boldsymbol{b}$}
    \STATE{$\ConsLaws_{i}:=\ConsLaws_{i} \cup \{ \phi(\vk,\vx) \}$} 
    \ENDFOR
    \STATE{Compute $V_{\vk,i}:= A_i \cap \QQ^r_+$}
    \STATE{$\ParPolConsLaws:=\ParPolConsLaws \cup \{ (V_{\vk,i},\ConsLaws_{i})\}$}
    \ENDFOR
    \RETURN    $\ParPolConsLaws$.
\end{algorithmic}
\end{algorithm}

\begin{remark}
In line 14 of Algorithm~\ref{alg:pol-cons} one needs to determine the intersection $ A_i \cap \QQ^r_+$, where 
$A_i$ is the semi-algebraic set computed via comprehensive Gröbner basis in line 3. 
The set $A_i$ is described by equations and inequalities in $\QQ[\vk]$. Joining them the inequalities 
 $k_1 >0,\dots, k_r >0$, which represent the positive orthant, one obtains a set of equations and inequalities describing $V_{\vk,i}$.
By applying the cylindrical algebraic
decomposition(CAD), implemented in e.g., QEPCAD~\cite{Brown:03a}, to
the latter set of equations and inequations one may obtain another (though
not necessarily better) representation of $V_{\vk,i}$. The program
SLFQ
(\textit{Simplifying Large Formulas with
  QEPCAD})~\cite{BrownGross:06a} can be used once QEPCAD is applied to
obtain such a simplified version. 
\end{remark}

\begin{remark}
  Let $\{ \phi_1(\vk,\vx),\dots, \phi_s(\vk,\vx)\} \subseteq \ConsLaws_{i})$ 
  be a set of $s$ parametric polynomial conservation laws with respect to the semi-algebraic set $V_{\vk,i}$ returned by Algorithm  \ref{alg:pol-cons}. 
 Then their Jacobian matrix
  \[ \left( \begin{matrix} \frac{\partial \phi_1(\vk,\vx)}{\partial x_1} &
        \dots & \frac{\partial
          \phi_1 (\vk,\vx)}{\partial x_n} \\
          \vdots & & \vdots \\
        \frac{\partial \phi_s(\vk,\vx)}{\partial x_1} & \dots &
        \frac{\partial \phi_s (\vk,\vx)}{\partial x_n}
      \end{matrix}
    \right)
  \]
  has rank $s$. Indeed, the conservation laws stem from $\QQ$-linearly independent syzygies, that is, there are 
  $\QQ$-linearly independent syzygies $\boldsymbol{g}_1,\dots,\boldsymbol{g}_s$ such that 
  $\grad \phi_i =\vg_i$ for all $1 \leq i \leq s$. Thus the Jacobian has rank $s$. Note that the conservation laws 
  $\phi_1(\vk,\vx),\dots, \phi_s(\vk,\vx) $ are not necessarily algebraically independent over $\QQ$. One can check $\phi_1(\vk,\vx),\dots, \phi_s(\vk,\vx) $ for algebraic independence using Gröbner bases  \cite[Chapter 9, section 5]{cox96:_ideal_variet_algor} or linear algebra.
\end{remark}

\begin{remark}\label{rmk:uncond-law-elimination1}
  By  Definition~\ref{def:exactandapproxcons} parametric conservation laws
  depend on the rate constants $\vk$, whereas unconditional conservation
  laws are independent of $\vk$.  If $\phi$ is an unconditional
  conservation law, then it can be seen as a parametric conservation law as well,
  hence, $\phi$ must be obtained from a syzygy that does not contain
  variables $\vk$.   Therefore, in order to compute unconditional conservation
  laws, we need to find those syzygies of the ideal generated by $F(\vk,\vx)$ that only depend on the variables $\vx$ and not on $\vk$.  To this end, we compute directly a basis of the syzygy module of $F(\vk,\vx)$ without the detour of the CGS using the elimination property of Gr\"obner bases. We fix a block term order on the variables $\vk$, $\vx$ with $\vk>\vx$.   If $G$  is a Gr\"obner basis for $\Syz(F)$ with respect to the block order, then
  $G \cap \QQ[\vx]^m$ is a Gr\"obner basis for $\Syz(F) \cap \QQ[\vx]^m$, see \cite[Thm 3.4.5]{robbiano-kreuzer-book1}.
  This means that in order to compute unconditional conservation laws,
  one can modify Algorithm \ref{alg:pol-cons} by adding a computation
  of $G \cap \QQ[\vx]^m$. Also in order to compute a generating set for $\Syz(F)_d\cap \QQ[\vx]^m$ one needs to multiply the elements of the Gröbner basis $G \cap \QQ[\vx]^m$ with the monomials only in $\vx$.
\end{remark}

\begin{remark}\label{rmk:non-CGB1} In the context of CRNs, some
  authors work with the ring $\QQ(\vk)[\vx]$ without specialising
  $\vk$, while in Algorithm \ref{alg:pol-cons}, we work with the ring
  $\QQ[\vk,\vx]$, considering case distinctions done for CGS
  computation, as $\vk$ can be specialised in $\QQ_+^r$. One can see
  that a Gr\"obner basis $G$ of the ideal generated by $F(\vk,\vx)$  in
  $\QQ(\vk)[\vx]$ corresponds to the Gr\"obner basis in the generic
  branch of a comprehensive Gr\"obner system in the ring
  $\QQ[\vk,\vx]$. 
  This fact about the generic branch has been considered in
  \cite{nabeshima-aca:22}, and a new algorithm for computing CGS has
  been presented based on this. Details on how to obtain the semi-algebraic set of the generic branch from $G$ can be found in \cite{nabeshima-aca:22}.
\end{remark}

We implemented Algorithm \ref{alg:pol-cons} in a \texttt{Singular}
library 
called~\texttt{polconslaw.lib}. Our implementation uses \texttt{grobcov} library for computing comprehensive Gr\"obner systems.  We also implemented the approach in Remark \ref{rmk:non-CGB1} for computing polynomial conservation laws
corresponding to the generic branch of a CGS in a Singular library, called \texttt{genpolconslaw.lib}, 
by just computing the conservation laws considering $\QQ(\vk)[\vx]$. 
Therefore, this approach is much faster than computing the
whole comprehensive Gr\"obner system and picking up the generic
branch. 
\cor{
Our Singular libraries \texttt{polconslaw.lib} and \texttt{genpolconslaw.lib} are available in Zenodo\footnote{\url{https://zenodo.org/record/7474523}} and in Github\footnote{\url{https://github.com/oradules/polynomial_conservation_laws}}.
}

We present a series of examples clarifying our computational methods. 



\begin{example}\label{ex:tiny-ex1}
  For the system 
  \[  \dot x_1 =  \ x_2,\
    \dot x_2 = \ x_1,\
    \dot x_3 = \ x_1x_2,
  \]
  the syzygy module has the basis
  \[\{(0,x_2,-1),\ (x_1,-x_2,0) \}.\] 
  Multiplying these two syzygies by all monomials in the variables $x_1$, $x_2$, $x_3$ up to degree three including also the monomial $1$, that is by the elements of the set 
  \[
  \{1, x_1, x_2, x_3, x_1^2, \dots, x_3^2 x_1, x_3^2 x_2, x_1 x_3 x_2 \},
  \]
  we obtain forty $\QQ$-linearly independent syzygies. We compute for these syzygies the images under the map $ \curl (\cdot)$ and then the $\QQ$-linear dependencies between the results. We obtain that the kernel of the linear map $ \curl (\cdot)$ has a basis formed by the elements
  \begin{gather*}
      \vf_1=(0,x_2,-1), \  \vf_2=(0,x_2^3-2x_2x_3,-x_2^2+2x_3), \   \vf_3=(x_1,-x_2,0), \\ 
       \vf_4=(x_1^3-2x_1x_3,0,-x_1^2+2x_3), \  \vf_5=(x_2^2 x_1-2x_1x_3,x_1^2x_2-x_2^3,-x_1^2+2x_3), \ 
  \end{gather*}
  that is $\vf_1,\dots,\vf_5$ span the $\QQ$-vector space of all syzygies up to degree $3$ with curl zero. Integration leads to the five unconditional conservation laws 
  \begin{gather*}
      \phi_1= \frac{x_2^2}{2} - x_3 , \ \phi_2= \frac{1}{4} x_2^4 - x_2^2 x_3 + x_3^2 , \ \phi_3 = \frac{x_1^2}{2} - \frac{x_2^2}{2} , \ 
      \phi_4 = \frac{1}{4} x_1^4 - x_1^2 x_3 + x_3^2, \\ \phi_5 = \frac{1}{2} x_1^2 x_2^2 - \frac{1}{4} x_2^4 - x_1^2 x_3 + x_3^2.
  \end{gather*}  
  
\cor{
Note that $\vf_1$ and $\vf_3$ are the generators of the syzygy module, and they are both conservative, hence, they are integrated to the conservation laws $\phi_1$ and $\phi_2$. One can see that the three other conservation laws obtained above are actually in the algebra generated by $\phi_1$ and $\phi_2$. The syzygy $\vf_2$ is the product $(x_2^2-2 x_3) \ \vf_1 = 2\phi_1\vf_2$, that $\vf_4$ is the product $(x_1^2-2x_3) (\vf_1+\vf_3) = 2(\phi_1+\phi_3)(\vf_1+\vf_3)$ and that the syzygy $\vf_5$ is the product $(x_1^2-2x_3) \vf_1+(x_2^2-2x_3) \vf_3 = 2(\phi_1+\phi_3)\vf_1 = 2\phi_1\vf_3$. 

The computations for this example can be performed automatically using our Singular library \texttt{genpolconslaw.lib}, in the corresponding files \texttt{example6\_29.sg}, in our Github and Zenodo Repositories.
}
\end{example}

\begin{example}\label{ex:pol-con-law-11}
  We apply Algorithm \ref{alg:pol-cons} to the system 
  \begin{align*}
    \dot{x_1}= &-k_1x_1,\\
    \dot{x_2}= &-k_2x_2,\\
    \dot{x_3}= & (k_1+k_2)x_1x_2
  \end{align*}
  only considering the generic branch of a comprehensive Gr\"obner basis, which is given by the
  semi-algebraic set defined by $k_1 \ne 0$ and $k_2 \ne 0$. For the generic branch a
  basis of the syzygy module is 
  \[
    \{(x_2,x_1,1), (-k_2x_2,k_1x_1,0)\}.
  \]
  From the above generating set, one obtains 20 syzygies that form  a basis for the $\QQ-$vector space of syzygies up to degree three, out of which only two elements are conservative. One conservative syzygy is $(x_2,x_1,1)$, which is integrated to the unconditional conservation law 
  \[ 
  \phi_1= x_1x_2+x_3.
  \]
  The other conservative syzygy  
  \[
  (x_1 x_2^2+x_2 x_3, x_1^2 x_2 + x_1 x_3, x_1 x_2 + x_3)
  \]
 leads to the degree three conservation law 
\[
\phi_2 = \frac{1}{2}x_1^2 x_2^2 + x_1 x_2 x_3 +\frac{1}{2} x_3^2.
\]
One can easily see that $\phi_2 = \frac{1}{2}\phi_1^2$, hence the two conservation laws computed are algebraically dependent.
One can check that the second element of the module basis is not conservative, because
  \[
  \curl (-k_2x_2,k_1x_1,0) =(\DP{}{x_2}(-k_2x_2)- \DP{}{x_1}(k_1x_1) = -(k_1+k_2),0,0) \neq 0.
  \]

Note that
\[
\{ ((k_1+k_2)x_2, 0, k_1), (-k_2x_2,k_1x_1,0) \}
\]
is another basis of the syzygy module of the generic branch, in which both elements have non-zero curl, but the linear combination of the basis elements
\[
 ((k_1+k_2)x_2, 0, k_1) + (-k_2x_2,k_1x_1,0)  = k_1 (x_2,x_1,1)
\]
 has zero curl and it is the only one 
of degree two and up to multiplication.
\cor{
Computations for all branches of the dynamical system in this example were carried out using our Singular library \texttt{polconslaw.lib}. The reader can use  \texttt{example6\_30.sg}, available in our Github and Zenodo Repositories, in order to carry on the computations automatically.
}
\end{example}

\begin{example}[BIOMD0000000629, \cite{LuedersSturmRadulescu:22}]\label{ex:bm629}
  The ODEs corresponding to Biomodel 629 in the ODEbase
  repository~\cite{LuedersSturmRadulescu:22} are the following:
  \begin{align*}
    \dot{x_1}=& -k_2x_1x_3 + k_3x_2, \\
    \dot{x_2}=&k_2x_1x_3 - k_3x_2 - k_4x_2x_4 + k_5x_5, \\
    \dot{x_3}=& -k_2x_1x_3 + k_3x_2, \\
    \dot{x_4}=& -k_4x_2x_4 + k_5x_5,\\
    \dot{x_5}=&k_4x_2x_4 - k_5x_5.
  \end{align*}

\cor{
Computations for generic branch using our Singular library \texttt{polconslaw.lib} can be found in the file  gbm629.txt, and computations for all branches using the Singular library \texttt{genpolconslaw.lib} can be carried on automatically via the file \texttt{bm629.sg}, which is available in our Github and Zenodo Repositories.
}
  A comprehensive Gr\"obner system for the steady state ideal consists
  of 10 branches. We consider first the generic branch and after that
  one nongeneric branch.
  One easily checks that the semi-algebraic set of the generic branch of the comprehensive
  Gr\"obner basis consists of $\{k_2 \ne 0\ , k_4 \ne 0 \}$, and the corresponding polynomial set consists of the five polynomials in the  right-hand sides of the ODE system. The syzygy module has a basis 
  \begin{gather*}
    \{\boldsymbol{f}_1= (-1,0,1,0,0), \ \boldsymbol{f}_2=
    (0,-1,-1,1,0), \
    \boldsymbol{f}_3= (0,0,0,1,1), \\
    \boldsymbol{f}_4= (k_2x_1x_3-k_4x_2x_4
    -k_3x_2+k_5x_5,k_2x_1x_3-k_3x_2,0,0,0)\}.
  \end{gather*}
 Obviously, $\vf_1$, $\vf_2$, $\vf_3$ are conservative, and integrating them leads to the linearly independent unconditional conservation laws
  \begin{equation*}
    \phi_1(\vk,\vx)= -x_1+x_3, \
    \phi_2(\vk,\vx)= -x_2-x_3+x_4, \
    \phi_3(\vk,\vx)=  x_4+x_5 .
  \end{equation*}
  The syzygy $\boldsymbol{f}_4(\vk,\vx)$ is not conservative
  as $\curl \boldsymbol{f}_4 \neq 0$. Multiplying  $\vf_1$, $\dots$, $\vf_4$ with all monomials such that the degree of the products is at most two, we obtain an $18$ dimensional $\QQ$-vectors space of syzygies and find nine conservation laws, where the six new found degree two conservation laws can be expressed as linear combinations of $\phi_1(\vk,\vx)$, $\phi_2(\vk,\vx)$ and $\phi_3(\vk,\vx)$ with coefficients in $\QQ[\vx,\vk]$.
  Enlarging the degree by one gives us a $64$ dimensional $\QQ$-vector space of syzygies. Our algorithm finds here $10$ new conservation conservation laws of degree three. Again these conservation laws are in the algebra generated by $\phi_1(\vk,\vx)$, $\phi_2(\vk,\vx)$ and $\phi_3(\vk,\vx)$. One may assume that it is enough to check the generators of the syzygy module to be conservative and that all further  conservation laws can be obtained as a linear combination of the generators that are conservative over $\QQ[\vk,\vx]$. But the last paragraph of Example~\ref{ex:pol-con-law-11} shows that this is not true.

\end{example}

As discussed earlier in Remark~\ref{rmk:uncond-law-elimination1} one
can compute unconditional conservation laws from the syzygy module by
an additional Gr\"obner basis computation.
On the other hand, unconditional conservation laws up to a given degree bound can be computed via ansatz, i.e., guessing the conservation laws with undetermined coefficients, and then obtaining coefficients by imposing the definition of first integrability. This method has been discussed in~\cite{goriely2001integrability} for computing polynomial first integrals of dynamical systems and it has been implemented in the automated prover tool Pegasus~\cite{sogokon2021pegasus}.  

Computations based on an ansatz avoid Gr\"obner basis (and syzygy) computations in the ring $\QQ[\vk,\vx]$.
However, computing polynomial conservation laws via ansatz would not necessarily lead to a basis in $\QQ[\vk,\vx]$ for the syzygy module, but rather to a basis for the $\RR$-vector space of conservative syzygies up to a given degree.
%
Moreover, the ansatz method can not be used to compute branches of parametric conservation laws. 
\cor{
Finally, using syzygies brings  an algebraic structure to the problem of first integral computation. 
Since this paper is the first step in this particular research direction, we are confident that there exists ample room for exploration regarding the algebraic characteristics of the conservation laws. In particular, we could anticipate the
possibility of finding a
 basis for the algebra of  conservation laws con be found directly, rather than 
 computing a $\QQ-$vector space basis for syzygies and subsequently filtering them via curl operator.
}

\cor{

\subsubsection{Testing syzygy computation on a biochemical models database}
We have systematically tested Algorithm~\ref{alg:pol-cons} using the models of the ODEbase repository~\cite{LuedersSturmRadulescu:22}.
Selected results and some interpretations based on the benchmark table are presented below, whereas the comprehensive report of the results is provided as Supplementary Materials.

In Table~\ref{tab:cgs}, we present the result of our computations up to degree $4$ for four biomodels from the ODEbase, including model 629 considered in Example~\ref{ex:bm629}. Running times are in seconds, with a limit of 4 hours. Columns $n$ and $r$ represent the number of variables and parameters in the model, respectively. While computations for model 629 have finished in quite a short time, for models 710 and 1053, we were only able to finish the computations for a few branches, and for model 1054, computations for none of the branches finished in four hours. This is due to several reasons. First of all comprehensive Gr\"obner bases are known to be hard to compute, due to the huge number of branches that they may produce. Also syzygy computations can be hard as their computation is equivalent to computing Gr\"obner bases. Moreover, computing a vector space basis for the syzygies has exponential complexity. 

An interesting point in this table is that in several rows, the number of linear and polynomial conservation laws are related. This is because most of the polynomial conservation laws computed are in the algebra generated by the linear conservation laws. As noted in Proposition~\ref{prop:alg-cons1}, conservation laws form an algebra, hence, for algebraically independent conservation laws $\phi_1,\dots,\phi_m$, every polynomial of the form $\phi_1^{n_1}\phi_2^{n_2}\ldots \phi_m^{n_m}$, where $n_1,\dots,n_m \in \NN$ is a conservation law. There are $\binom{m+d}{d}-1$ many such polynomials of degree at most $d$. For $d=4$ and $m=1,2,3,4,5,6$, this number is  $4,14,34,69,125,209$, respectively. We obtain exactly those numbers (in the column "$\#$ cons. laws" that presents the total number of conservation laws) in all rows, except for the gray rows that include more polynomial conservation laws. Those are polynomial conservation laws that are algebraically independent of the linear conservation laws. 

Similarly, in Examples~\ref{ex:tiny-ex1} and~\ref{ex:pol-con-law-11}
one can see that the  polynomial conservation laws are generated by the lowest degree (not necessarily linear) conservation laws.  This suggests that the $\QQ$-algebra of polynomial conservation laws is finitely generated and that the generators include curl-zero elements of a minimal basis of the syzygy module. Furthermore, the generators other than those coming from a basis of the syzygy module are rare. This subject will be further developed elsewhere.


{\footnotesize
\begin{longtable}{|r|r|r|r|r|r|r|r|r|}
\caption{Conservation Laws in Branches of CGS
\label{tab:cgs}}\\
    \hline
   Bio- & $n$ & $r$ & running  & \# branches & branch & \# cons.& \# lin. cons. & \# pol. cons.\\
    Model &  &  & time &  & number  &  laws &   laws & laws \\
    \hline

629  & 5 & 8& 872  & 10 & 1 & 34 & 3 & 31 \\ \cline{6-9}    
&&&&& 2 & 34 & 3 & 31 \\ \cline{6-9}
&&&&& 3 & 69 & 4 & 65 \\ \cline{6-9}
&&&&& 4 & 69 & 4 & 65 \\ \cline{6-9}
&&&&& 5 & 125 & 5 & 120 \\ \cline{6-9}
&&&&& 6 & 34 & 3 & 31 \\ \cline{6-9}
&&&&& 7 & 69 & 4 & 65 \\ \cline{6-9}
&&&&& 8 & 34 & 3 & 31 \\ \cline{6-9}
&&&&& 9 & 69 & 4 & 65 \\ \cline{6-9}
&&&&& 10 & 34 & 3 & 31 \\ \hline

710  & 7 & 18& 14401  & 5891 & 3 & 4 & 1 & 3 \\ \cline{6-9}
&&&&& 4 & 4 & 1 & 3 \\ \cline{6-9}
&&&&& 5 & 14 & 2 & 12 \\ \cline{6-9}
&&&&& 6 & 14 & 2 & 12 \\ \cline{6-9}
&&&&& 7 & 34 & 3 & 31 \\ \cline{6-9}
&&&&& 8 & 34 & 3 & 31 \\ \cline{6-9}
&&&&& 9 & 34 & 3 & 31 \\ \cline{6-9}
&&&&& 10 & 69 & 4 & 65 \\ \cline{6-9}
&&&&& 11 & 125 & 5 & 120 \\ \cline{6-9}
&&&&& 12 & 125 & 5 & 120 \\ \hline

1053  & 6 & 7& 14401  & 28 & 3 & 4 & 1 & 3 \\ \cline{6-9}
&&&&& 4 & 69 & 4 & 65 \\ \cline{6-9}
&&&&& 5 & 125 & 5 & 120 \\ \cline{6-9}
&&&&& 6 & 209 & 6 & 203 \\ \cline{6-9}
&&&&& 7 & 209 & 6 & 203 \\ \cline{6-9}
&&&&& 8 & 4 & 1 & 3 \\ \cline{6-9}
&&&&& 9 & 4 & 1 & 3 \\ \cline{6-9}
 &&&&&   10 & 21 & 2 & 19 \\ \cline{6-9}
&&&&& 11 & 34 & 3 & 31 \\ \cline{6-9}
&&&&& 12 & 69 & 4 & 65 \\ \cline{6-9}
&&&&& 13 & 69 & 4 & 65 \\ \cline{6-9}
&&&&& 15 & 4 & 1 & 3 \\ \cline{6-9}
&&&&& 19 & 4 & 1 & 3 \\ \cline{6-9}
 &&&&&   21 & 17 & 2 & 15 \\ \cline{6-9}
 &&&&&   22 & 38 & 3 & 35 \\ \hline

1054  & 7 & 12& 14401  & 1379 &  &  & &  \\ \hline 
\end{longtable}
}

The discussion above leads us to consider computing only conservation laws that arise from conservative generators of a Gr\"obner basis of the syzygy module, as a heuristic. Such computations would naturally be much faster than computing a basis for syzygies as a vector space. Moreover, considering only the generic branch and using the trick mentioned in Remark~\ref{rmk:non-CGB1}, one avoids computing a comprehensive Gr\"obner basis, which further speeds up the computations. The latter heuristic has been considered in Table~\ref{tab:syz-mod-generic}, carrying on computations for the same biomodels as in Table~\ref{tab:cgs}. The computations took only one second per model, and we found two linear conservation laws for model 1054, for which we had run out of time during previous computations.

{\footnotesize
\begin{longtable}{|r|r|r|r|r|r|r|r|r|} 
  \caption{Computations for Syzygy Module Basis (Generic Branch)
    \label{tab:syz-mod-generic}} \\
  \hline
  BioModel & $n$ & $r$ & running  & \# syzygy mod. & \# cons. laws & \# lin. cons. & \# pol.  cons.\\
 &  &   &  time  & basis &     & laws &  laws \\
      \hline  
      629 & 5&8 & 1 & 4 & 3 & 3 & 0 \\ \hline
      710 & 7&18 & 1 & 28 & 0 & 0 & 0 \\ \hline
      1053 & 6&7 & 1 & 15 & 0 & 0 & 0 \\ \hline
      1054 & 7&12 & 1 & 21 & 2 & 2 & 0 \\ \hline
      \end{longtable}
      }

}
\cor{Figure~\ref{fig:figure1} illustrates how the number of computed syzygies and conservation laws increases with respect to the number of variables of the model. As expected, the  $Q$-vector space basis calculation fails more often, but when successful it leads to much more polynomial conservation laws, while syzygy module basis calculation produces algebraically independent conservation laws much faster.}

\begin{figure}[h!]
   \centering
   \includegraphics[width=0.7\linewidth]{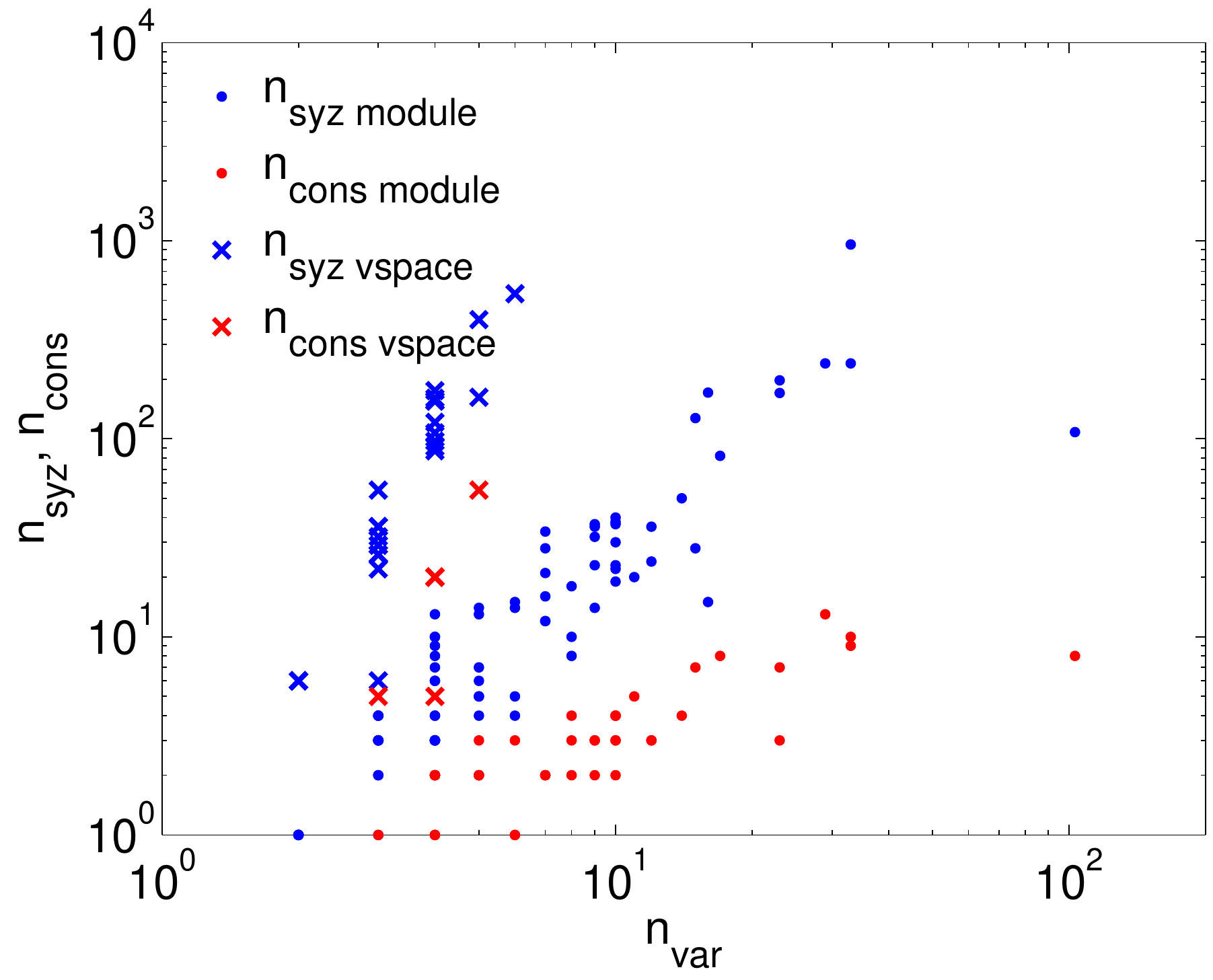}
   \caption{Computation of polynomial conservation laws arising from the generic branch of comprehensive Gr\"obner system for CRN models from the database Biomodels by two methods: from the generators of the syzygy module and
   from the basis of $\QQ-$vector space of syzygies. 
   $n_{var}$ is the number of variables and ODEs in the CRN model. The number of syzygies and polynomial 
   conservation laws increases with the number of variables. When the computation of the  $\QQ-$vector space basis succeeds, the number of computed polynomial conservation laws is sensitively larger. 
   \label{fig:figure1}
   }
\end{figure}






\section*{Availability} 
ODEbase is available at \url{https://www.odebase.org/}. The Singular libraries \texttt{polconslaw.lib} and \texttt{genpolconslaw.lib} are available at Zenodo under the record \url{https://zenodo.org/record/7474523} and through Github at the following url \url{https://github.com/oradules/polynomial_conservation_laws}.

\section*{Acknowledgements}
\cor{We thank the referees for their thoughtful comments that helped us improve our manuscript. }
OR thanks Werner Seiler for very insightful discussions and for his
hospitality \cor{at} the University of Kassel where the ideas about
approximate conservation laws were initiated.  HR would like to thank
Deepak Kapur and Amir Hashemi for the discussions about comprehensive
Gr\"obner bases and syzygy computations.    The project
SYMBIONT owes a lot to Andreas Weber who sadly left us in 2020, but
who is still present in our memories.


\newpage
\section*{APPENDIX: Polynomial conservation laws of polynomial
  CRNs in the Biomodels database}

{\it Biomodels} \cite{BioModels2015a} is a comprehensive database of
CRN models. The models of this database were mapped onto ODEbase, a
canonical source of symbolic computation input
\cite{luders2022odebase}.  We have used ODEbase for making experiments
using our implementations and for testing the scalability of our
computational methods for polynomial conservation laws.  All of the computations were carried on a single core of an Intel i5-9400, 2.9 GHz processor, using Ubuntu 20.04 in 64-bit mode.

We have run the implementation of Algorithm~\ref{alg:pol-cons} 
in two different ways:
\begin{enumerate}
    \item First we considered only the generating set of the module of syzygies of the steady state ideals. This means not considering lines 8 and 9 of the algorithm. As explained earlier, this leads to computing some $Q[\vx]$-linearly independent conservation laws, but not all possible $Q-$linearly independent conservation laws. Table~\ref{tab:table1} shows the result of the computations only for the generic branch of a comprehensive Gröbner system (as in Remark~\ref{rmk:non-CGB1}) and Table~\ref{tab:table2} shows the result of the computations for all branches of a comprehensive Gröbner basis. The running time for each biomodel was at most one hour. 
    \item Secondly, we did a full implementation of our algorithm, in order to compute a basis for the $Q$-vector space of conservation laws up to degree 4 for the generic branch of a comprehensive Gröbner basis, and up to degree 5 for all branches of a comprehensive Gröbner basis. The results can be found in Tables~\ref{tab:new-table1} and \ref{tab:new-table2}, consecutively. The running time for each biomodel was at most four hours. 
\end{enumerate}


\cor{Below is an interpretation of the four tables produced out of our experiments.
}
\begin{itemize}
    \item Table~\ref{tab:table1} shows the result of computations of the generic branch, performed using our Singular library \texttt{genpolconslaw.lib}, only computing the basis for the syzygy module and not considering the vector space of syzygies up to a given degree. This results in computing some conservation laws that correspond to the conservative elements of the syzygy module basis. 
    The computations either finished within a few seconds or did not finish after one hour. This is due to the EXPSPACE complexity of the syzygy computations (which is the same as Gröbner bases basis computation complexity). Another interesting observation concerning Table~\ref{tab:table1} is the large proportion of irrotational syzygies. For example, for Biomodel 270, there are 956 syzygies (computed just within a second), out of which only 10 syzygies are irrotational. 
    Furthermore, \cor{with this method we only found linear unconditional conservation laws in the generic branch. We will see below that this is because the computed generators of the syzygy module correspond to 
    linear conservation laws, which does not exclude the existence of other polynomial conservation laws. }
    \item Table~\ref{tab:table2} shows the result of parametric polynomial conservation law computations using our Singular library \texttt{polconslaw.lib}.  As explained in Section~\ref{sec:syzygies}, parametric polynomial conservation laws result from non-generic branches. Interestingly, most of the parametric conservation laws that are computed within one hour from the non-generic branches are polynomial. Again, we must clarify that we only considered a basis of the syzygy module, but not a basis for the $Q$-vector space of syzygies up to a given degree. This results in computing only those conservation laws that arise \cor{from} the syzygy module basis elements.
    
    We must emphasise that in Table~\ref{tab:table2}, the parametric conservation laws were computed using reduced Gr\"obner bases, which may lead to removing some linear conservation laws in the generic branch. For instance, Biomodel 6 has two syzygies in the generic branch, and one of them is conservative, which leads to a linear conservation law ($x_1=const.$, resulting from $\D{x_1}{t} = 0$ and the syzygy $(1,0,0,0)$).  In fact, this conservation law is not parametric, and therefore the reduction has no effect on the final result. Our calculation \cor{in Table~\ref{tab:table1}} of unconditional conservation laws does not employ reduction and detects the linear conservation law $x_1 = const.$ of the Biomodel 6.
    
    
    As expected, computing conservation laws in the generic branch is much faster than computing parametric conservation laws. Most of the
    computations in the generic branch are finished within one hour,
    but that is not the case for parametric polynomial conservation law
    computations. This is because when the number of branches of CGS is
    high, computing syzygies for all of those branches can be expensive.
    
    
    
    \item Table~\ref{tab:new-table1} shows the result of computing a basis for the $Q$-vector space of conservation laws up to degree 5 in the generic branch.

    \item Table~\ref{tab:new-table2} presents the result of computing a $Q$-vector space basis for syzygies up to degree 4, in each branch of a comprehensive Gröbner system. Unlike Table~\ref{tab:table2}, we do not compute the reduced Gröbner basis for each branch, but rather we specialise the generators of the input steady state ideal via the conditions describing the corresponding algebraic set. This allows us to recover those parametric conservation laws that were lost during the computations in Table~\ref{tab:table2}. 
\end{itemize}

In comparison with computing just a basis for the syzygy module in Tables~\ref{tab:table1} and \ref{tab:table2}, computing a basis for the $Q$-vector space of syzygies up to a given degree in Tables~\ref{tab:new-table1} and \ref{tab:new-table2} requires extra computations in lines 8 and 9 of Algorithm~\ref{alg:pol-cons}. 
The extra computations have exponential complexity in the number of variables and the degree bound, and therefore, may need considerably longer time for computations. This can be seen from those biomodels in Tables~\ref{tab:new-table1} and \ref{tab:new-table2} for which the computations did not finish in four hours, while in Tables~\ref{tab:table1} and \ref{tab:table2} the computations finished in one hour. However, theoretically, the dominant complexity factor will be the syzygy computation, which is EXPSPACE-complete, rather than the exponential factor.

As expected, when the computations finished in four hours, Tables~\ref{tab:new-table1} and \ref{tab:new-table2} contain more polynomial conservation laws than Tables~\ref{tab:table1} and \ref{tab:table2}. 
In order to obtain a polynomial conservation law quickly, we recommend using syzygy modules, while for computing a basis for the $Q$-vector space of conservation laws, we recommend using the $Q$-vector space of conservative syzygies. 

We note that our implemented libraries are prototype implementations of our algorithm and there is a lot of room for improvement in terms of speed and memory. For instance, one can ask the algorithm to first compute the conservative syzygies obtained from a basis of the syzygy module and then continue with computing the $Q$-vector space of conservative syzygies. Also, our implementation repeats the construction of a $Q$-vector space of syzygies up to the given degree. An improvement would be to compute such a basis once and reuse it.

Our experiments also show that all the polynomial conservation laws computed in Table~\ref{tab:new-table1}  are algebraically dependent on the linear conservation laws. 
Indeed, $r$ independent linear conservation 
laws generate the linearly independent polynomial
conservation laws $\phi_1^{n_1}\phi_2^{n_2}\ldots \phi_r^{n_r}$ of degree at most $d$, 
where $n_1,\ldots,n_r$ satisfy
\begin{equation}
1 \leq n_1+n_2+\ldots+n_r \leq d, n_i \in \ZZ_{\geq 0}, 1\leq i\leq r. \label{inequalities}
\end{equation}
\cor{
The inequalities \eqref{inequalities} have 
$\binom{r+d}{d}-1$ distinct solutions.
It follows that
$r$ linearly independent linear conservation laws generate 
$\binom{r+d}{d}-1$ linearly independent polynomial conservation laws. 

There are $5,20,55$ such generated polynomial conservation laws for $d=5$ and $r=1,2,3$ linear conservation laws, respectively. Comparing these numbers with the dimension  of the linear space of polynomial conservation laws, given in Tables~\ref{tab:new-table1}, we realize that all the conservation laws in this table are generated by linear conservation laws. 

Interestingly, some non-generic branches computed in Table~\ref{tab:new-table2} have more polynomial conservation laws than those generated by the linear conservation laws, meaning that some polynomial conservation laws are algebraically independent of the linear ones. Indeed, 
$4,14,34,69,125,209$ conservation laws are generated by linear conservation laws for $d=4$ and $r=1,2,3,4,5,6$, respectively. Whenever we find larger numbers of conservation laws, some polynomial conservation laws are independent of the linear ones. The rows colored in gray in Table~\ref{tab:new-table2} contain polynomial conservation laws, algebraically independent of the linear ones. 
}


Similarly, Examples~\ref{ex:tiny-ex1},~\ref{ex:pol-con-law-11}
of the main text show that  polynomial conservation laws are generated by lowest degree (not necessarily linear) conservation laws.  This suggests that the $\QQ$-algebra of polynomial conservation laws is finitely generated and that the generators 
include curl-zero elements of a minimal basis of the syzygy module. Furthermore, the generators other than those coming from a basis of the syzygy module are rare. This subject will be further developed elsewhere.






{\footnotesize
\begin{longtable}{|r|r|r|r|r|r|r|r|r|} 
  \caption{Conservation laws of CRNs of the database Biomodels
  resulting from the syzygy \textbf{module} basis in the generic branch of comprehensive 
  Gr\"obner system. 
  The columns represent the biomodel number, running time in seconds, the number of
  the generators of the syzygy module of the corresponding steady state ideal, the number of
  conservation laws obtained from those syzygies (equivalently, the
  number of irrotational syzygies in the module basis), the number of linear conservation
  laws, and finally the number of polynomial conservation laws for the
  biomodel, respectively. Blank rows show that the computations did
  not finish in 3600 seconds. The major part of the computation time in the
  algorithm is spent on computing syzygies, and for those CRNs
  that the computations did not finish in 3600 seconds, in fact the
  syzygy computation did not finish in that time period (on a single core of an Intel i5-9400, 2.9 GHz processor).
  \cor{
$n$ and $r$ are the numbers of variables and parameters of the models, respectively.  
  }
  \label{tab:table1}} \\
  \hline
  Bio & n&r & running  & \# syzygy  & \# cons. & \# lin.  & \# pol.  \\
 Model&  &   &  time  & mod. basis & laws      & cons. laws & cons. laws \\
      \hline  
2 & 13&37 & 3601 &&&& \\ \hline
6 & 4&7 & 1 & 2 & 1 & 1 & 0 \\ \hline
28 & 16&31  & 3601 &&&& \\ \hline
30 & 18&36 & 3601 &&&& \\ \hline
38 & 17&29 & 6 & 82 & 8 & 8 & 0 \\ \hline
40 & 5&9 & 1 & 6 & 2 & 2 & 0 \\ \hline
46 & 16&22 & 3601 &&&& \\ \hline
69 & 10&25 & 1 & 38 & 4 & 4 & 0 \\ \hline
72 & 7&11 & 1 & 12 & 2 & 2 & 0 \\ \hline
80 & 10&17 & 1 & 22 & 4 & 4 & 0 \\ \hline
82 & 10&17 & 1 & 23 & 4 & 4 & 0 \\ \hline
85 & 17&42 & 3601 &&&& \\ \hline
86 & 17&56 & 3601 &&&& \\ \hline
92 & 4&7 & 1 & 3 & 2 & 2 & 0 \\ \hline
99 & 7&16 & 1 & 34 & 0 & 0 & 0 \\ \hline
102 & 13&42 & 3601 &&&& \\ \hline
103 & 17&50 & 3601 &&&& \\ \hline
108 & 9&19 & 3601 &&&& \\ \hline
150 & 4&5 & 1 & 3 & 2 & 2 & 0 \\ \hline
156 & 3&8 & 1 & 3 & 0 & 0 & 0 \\ \hline
159 & 3&8 & 1 & 3 & 0 & 0 & 0 \\ \hline
198 & 12&20 & 1 & 24 & 3 & 3 & 0 \\ \hline
199 & 15&18 & 1 & 28 & 7 & 7 & 0 \\ \hline
200 & 22&54 & 3601 &&&& \\ \hline
205 & 194&351 & 3603 &&&& \\ \hline
243 & 23&19 & 1 & 197 & 3 & 3 & 0 \\ \hline
252 & 4&10 & 1 & 7 & 0 & 0 & 0 \\ \hline
257 & 11&23 & 3601 &&&& \\ \hline
270 & 33&49 &  1 & 956 & 10 & 10 & 0 \\ \hline
282 & 6&8 & 1 & 5 & 3 & 3 & 0 \\ \hline
283 & 4&5 & 1 & 3 & 2 & 2 & 0 \\ \hline
292 & 6&8 & 1 & 4 & 1 & 1 & 0 \\ \hline
314 & 12&15 & 1 & 36 & 3 & 3 & 0 \\ \hline
315 & 20&52 & 3601 &&&& \\ \hline
335 & 34&54 & 3601 &&&& \\ \hline
357 & 9&15 & 5 & 23 & 2 & 2 & 0 \\ \hline
359 & 9&19 & 1 & 37 & 3 & 3 & 0 \\ \hline
360 & 9&19 & 1 & 32 & 3 & 3 & 0 \\ \hline
361 & 8&13 & 1 & 18 & 3 & 3 & 0 \\ \hline
362 & 34&56 & 3601 &&&& \\ \hline
363 & 4&6 & 1 & 4 & 1 & 1 & 0 \\ \hline
364 & 14&24 & 3601 &&&& \\ \hline
365 & 30&15 & 3601 &&&& \\ \hline
405 & 8&7 & 1 & 8 & 2 & 2 & 0 \\ \hline
430 & 27&52 & 3601 &&&& \\ \hline
431 & 27&52 & 3601 &&&& \\ \hline
447 & 13&24 & 3601 &&&& \\ \hline
475 & 23&34 & 1 & 170 & 7 & 7 & 0 \\ \hline
483 & 8&13 & 1 & 10 & 4 & 4 & 0 \\ \hline
530 & 10&18 &  3 & 30 & 3 & 3 & 0 \\ \hline
539 & 6&13 & 1 & 14 & 1 & 1 & 0 \\ \hline
552 & 3&5 & 1 & 1 & 0 & 0 & 0 \\ \hline
553 & 3&5 & 1 & 1 & 0 & 0 & 0 \\ \hline
609 & 5&12 & 1 & 7 & 0 & 0 & 0 \\ \hline
614 & 1&4 & 1 & 0 & 0 & 0 & 0 \\ \hline
629 & 5&8 & 1 & 4 & 3 & 3 & 0 \\ \hline
647 & 11&17 & 1 & 20 & 5 & 5 & 0 \\ \hline
651 & 29&48 & 1 & 240 & 13 & 13 & 0 \\ \hline
667 & 103&91 & 4 & 108 & 8 & 8 & 0 \\ \hline
676 & 14&25 & 1 & 50 & 4 & 4 & 0 \\ \hline
679 & 4&12 & 1 & 3 & 0 & 0 & 0 \\ \hline
680 & 4&12 & 1 & 3 & 0 & 0 & 0 \\ \hline
687 & 15&17 & 1 & 127 & 0 & 0 & 0 \\ \hline
688 & 16&21 & 194 & 171 & 0 & 0 & 0 \\ \hline
707 & 5&11 & 1 & 13 & 0 & 0 & 0 \\ \hline
710 & 7&18 & 1 & 28 & 0 & 0 & 0 \\ \hline
742 & 2&7 & 1 & 1 & 0 & 0 & 0 \\ \hline
745 & 5&14 & 3601 &&&& \\ \hline
747 & 33&54 & 1 & 240 & 9 & 9 & 0 \\ \hline
748 & 4&9 & 1 & 10 & 0 & 0 & 0 \\ \hline
755 & 9&11 & 1 & 14 & 3 & 3 & 0 \\ \hline
758 & 2&5 & 1 & 1 & 0 & 0 & 0 \\ \hline
780 & 4&16 & 1 & 9 & 0 & 0 & 0 \\ \hline
781 & 3&11 & 1 & 4 & 0 & 0 & 0 \\ \hline
782 & 2&7 &  1 & 1 & 0 & 0 & 0 \\ \hline
783 & 3&9 & 1 & 4 & 0 & 0 & 0 \\ \hline
793 & 2&5 & 1 & 1 & 0 & 0 & 0 \\ \hline
795 & 2&7 & 1 & 1 & 0 & 0 & 0 \\ \hline
815 & 2&7 & 1 & 1 & 0 & 0 & 0 \\ \hline
827 & 10&18 & 1 & 37 & 3 & 3 & 0 \\ \hline
854 & 4&11 & 1 & 4 & 1 & 1 & 0 \\ \hline
868 & 5&10 & 1 & 5 & 2 & 2 & 0 \\ \hline
870 & 7&17 & 1 & 16 & 2 & 2 & 0 \\ \hline
875 & 4&9 & 1 & 6 & 0 & 0 & 0 \\ \hline
880 & 4&15 & 1 & 10 & 0 & 0 & 0 \\ \hline
882 & 3&6 &  1 & 2 & 0 & 0 & 0 \\ \hline
886 & 5&19 & 3601 &&&& \\ \hline
887 & 4&11 & 1 & 8 & 0 & 0 & 0 \\ \hline
894 & 3&9 & 1 & 3 & 0 & 0 & 0 \\ \hline
905 & 5&16 & 1 & 14 & 0 & 0 & 0 \\ \hline
906 & 3&10 & 1 & 3 & 0 & 0 & 0 \\ \hline
916 & 5&7 & 1 & 5 & 2 & 2 & 0 \\ \hline
922 & 3&10 & 1 & 3 & 0 & 0 & 0 \\ \hline
932 & 4&8 & 1 & 6 & 0 & 0 & 0 \\ \hline
934 & 16&24 & 1 & 15 & 0 & 0 & 0 \\ \hline
940 & 20&43 & 3601 &&&& \\ \hline
951 & 34&72 & 3601 &&&& \\ \hline
957 & 4&5 & 1 & 3 & 2 & 2 & 0 \\ \hline
968 & 10&15 & 1 & 19 & 4 & 4 & 0 \\ \hline
987 & 9&12 & 1 & 36 & 0 & 0 & 0 \\ \hline
1004 & 10&21 & 1 & 40 & 2 & 2 & 0 \\ \hline
1021 & 7&11 & 1 & 12 & 0 & 0 & 0 \\ \hline
1024 & 2&5 & 1 & 1 & 0 & 0 & 0 \\ \hline
1031 & 3&5 & 1 & 3 & 0 & 0 & 0 \\ \hline
1035 & 4&12 & 3 & 13 & 0 & 0 & 0 \\ \hline
1037 & 2&7 & 1 & 1 & 0 & 0 & 0 \\ \hline
1038 & 3&11 & 1 & 4 & 0 & 0 & 0 \\ \hline
1045 & 3&3 & 1 & 2 & 1 & 1 & 0 \\ \hline
1053 & 6&7 & 1 & 15 & 0 & 0 & 0 \\ \hline
1054 & 7&12 & 1 & 21 & 2 & 2 & 0 \\ \hline
\end{longtable}
}

\clearpage
{\footnotesize
\begin{longtable}{|r|r|r|r|r|r|r|r|r|}
  \caption{Parametric conservation laws of CRNs of the database Biomodels resulting from computations only on a basis of the syzygy \textbf{module} in each branch, using our Singular library \texttt{polconslaw.lib}.
  The columns represent Biomodel number, the running time in seconds,
  number of branches of the corresponding comprehensive Gr\"obner
  system computed within the running time (3600 seconds
  on a single core of an Intel i5-9400, 2.9 GHz processor), number of the
  branch, the total number of conservation laws computed  from the syzygy module basis elements in the
  branch, number of linear conservation laws computed, and finally,
  the number of polynomial conservation laws computed in the
  branch.
  Rows whose running time was above 3600 seconds and did not finish
  computing conservation laws within that branch, have blank entries
  under the columns of number of syzygies, and (linear/polynomial) conservation
  laws.
Rows whose running time was below 3600 seconds but no conservation laws were found, 
  have blank entries for the number of (linear/polynomial)
  conservation laws.
For all the other branches, 
the branches leading to conservation laws are listed in the order they are found, with 
the corresponding number of conservation laws. 
\cor{To simplify the table we regrouped the branches with the same numbers of conservation
laws and displayed only the numbers per subgroup.} 
Branches are missing because they were not
computed in 3600 seconds or because they do not lead to conservation laws (for instance 
branches 1-18 of Biomodel 40 lead to no conservation law). 
\cor{$n$ and $r$ are the numbers of variables and parameters of the models, respectively. }
  \label{tab:table2}
  }\\
  \hline
  Bio- & n & r & running   & \# branches & \# branches & \# cons. & \# lin. & \# pol. \\
  Model &  &   & time     &             & per subgroup &  laws &  cons. laws & cons. laws \\
  \hline        
2 & 13&37 & 3601 & 0 & &&& \\ \hline
6 & 4&7 & 1 & 7 & & & & \\ \hline
28 & 16&31  & 3601 & 0 & &&& \\ \hline
30 & 18&36 & 3601 & 0 & &&& \\ \hline
38 & 17&29 &3601 & 0 & &&& \\ \hline
40 & 5&9 & 1 & 35 & 
16 & 1 & 0 & 1 \\ \cline{6-9} &&
&&& 1 & 2 & 0 & 2 \\ \hline
46 & 16&22 & 3601 & 0 & &&& \\ \hline
69 & 10&25 & 3601 & 12354 & 1 & 12 & 2 & 10 \\ \cline{6-9} &&
&&& 1 & 19 & 4 & 15 \\ \cline{6-9} &&
&&& 1 & 25 & 5 & 20 \\ \cline{6-9} &&
&&& 1 & 25 & 5 & 20 \\ \hline
72 & 7&11 & 1 & 74 & &&& \\ \hline
80 & 10&17 & 3619 & 264 & 1 & 2 & 0 & 2 \\ \cline{6-9} &&
&&& 27 & 4 & 0 & 4 \\ \hline
82 & 10&17 & 3620 & 344 & 28 & 1 & 0 & 1 \\ \hline
85 & 17&42 & 3606 & 0 & &&& \\ \hline
86 & 17&56 & 3606 & 0 & &&& \\ \hline
92 & 4&7 & 1 & 10 & &&& \\ \hline
99 & 7&16 & 3620 & 3783 & 2 & 2 & 0 & 2 \\ \cline{6-9} &&
&&& 1 & 8 & 0 & 8 \\ \cline{6-9} &&
&&& 1 & 12 & 0 & 12 \\ \cline{6-9} &&
&&& 1 & 21 & 0 & 21 \\ \cline{6-9} &&
&&& 1 & 26 & 0 & 26 \\ \cline{6-9} &&
&&& 16 & 32 & 0 & 32 \\ \cline{6-9} &&
&&& 6 & 34 & 0 & 34 \\ \hline
102 & 13&42 & 3606 & 0 & &&& \\ \hline
103 & 17&50 & 3601 & 0 & &&& \\ \hline
108 & 9&19 &3601 & 0 & &&& \\ \hline
150 & 4&5 &1 & 10 & &&& \\ \hline
156 & 3&8 &1 & 25 & 16-19 & 2 & 0 & 2 \\ \cline{6-9} &&
&&& 20-25 & 3 & 0 & 3 \\ \hline
159 &3&8 & 1 & 36 & 19-36 & 2 & 0 & 2 \\ \hline
198 &12&20 & 3481 & 1317 & 1 & 4 & 0 & 4 \\ \cline{6-9} &&
&&& 1 & 8 & 0 & 8 \\ \cline{6-9} &&
&&& 1 & 10 & 0 & 10 \\ \cline{6-9} &&
&&& 1 & 12 & 0 & 12 \\ \cline{6-9} &&
&&& 24 & 13 & 0 & 13 \\  \hline
199 &15&18 & 3610 & 942 & 1 & 2 & 0 & 2 \\ \cline{6-9} &&
&&& 1 & 2 & 0 & 2 \\ \cline{6-9} &&
&&& 1 & 7 & 0 & 7 \\ \cline{6-9} &&
&&& 25 & 9 & 0 & 9 \\  \hline
200 &22&54 & 3601 & 0 & &&& \\ \hline
205 &194&351 & 3601 & 0 & &&& \\ \hline
243 &23&19 & 3606 & 55296 & 1 & 161 & 0 & 161 \\ \cline{6-9} &&
&&& 1 & 300 & 0 & 300 \\ \cline{6-9} &&
&&& 1 & 438 & 0 & 438 \\ \cline{6-9} &&
&&& 1 & 556 & 0 & 556 \\ \cline{6-9} &&
&&& 1 & 671 & 0 & 671 \\ \cline{6-9} &&
&&& 1 & 795 & 0 & 795 \\ \cline{6-9} &&
&&& 1 & 913 & 0 & 913 \\ \cline{6-9} &&
&&& 1 & 988 & 0 & 988 \\ \cline{6-9} &&
&&& 1 & 1040 & 0 & 1040 \\ \cline{6-9} &&
&&& 1 & 1068 & 0 & 1068 \\ \cline{6-9} &&
&&& 1 & 1089 & 0 & 1089 \\ \cline{6-9} &&
&&& 1 & 1104 & 0 & 1104 \\ \cline{6-9} &&
&&& 1 & 1114 & 0 & 1114 \\ \cline{6-9} &&
&&& 1 & 1120 & 0 & 1120 \\ \cline{6-9} &&
&&& 5 & 1123 & 0 & 1123 \\ \cline{6-9} &&
&&& 2 & 1124 & 0 & 1124 \\ \cline{6-9} &&
&&& 5 & 1125 & 0 & 1125 \\  \hline
252 & 4&10 & 3609 & 175 & 9 & 1 & 0 & 1 \\ \cline{6-9} &&
&&& 9 & 3 & 0 & 3 \\ \cline{6-9} &&
&&& 3 & 4 & 0 & 4 \\ \cline{6-9} &&
&&& 6 & 5 & 0 & 5 \\  \hline
257 & 11&23 & 3601 & 0 & &&& \\ \hline
270 & 33&49 & 3601 & 0 & &&& \\ \hline
282 &  6&8 &1 & 18 & &&& \\ \hline
283 & 4&5 &1 & 2 & &&& \\ \hline
292 & 6&8 &1 & 12 & &&& \\ \hline
314 & 12&15 &3608 & 1036 & 1 & 10 & 0 & 10 \\ \cline{6-9} &&
&&& 1 & 17 & 0 & 17 \\ \cline{6-9} &&
&&& 1 & 23 & 0 & 23 \\ \cline{6-9} &&
&&& 1 & 28 & 0 & 28 \\ \cline{6-9} &&
&&& 1 & 33 & 0 & 33 \\ \cline{6-9} &&
&&& 1 & 37 & 0 & 37 \\ \cline{6-9} &&
&&& 17 & 39 & 0 & 39 \\ \cline{6-9} &&
&&& 5 & 40 & 0 & 40 \\ \hline
315 &20&52 & 3601 & 0 & &&& \\ \hline
335 &34&54 & 3604 & 0 & &&& \\ \hline
357 & 9&15 & 3607 & 1953 & 1 & 1 & 0 & 1 \\ \cline{6-9} &&
&&& 1 & 2 & 0 & 2 \\ \cline{6-9} &&
&&& 1 & 5 & 0 & 5 \\ \cline{6-9} &&
&&& 1 & 11 & 2 & 9 \\ \cline{6-9} &&
&&& 1 & 17 & 4 & 13 \\ \cline{6-9} &&
&&& 19 & 19 & 4 & 15 \\ \cline{6-9} &&
&&& 3 & 20 & 4 & 16 \\  \hline
359 &9&19 & 3605 & 9741 & 1 & 1 & 0 & 1 \\ \cline{6-9} &&
&&& 1 & 2 & 0 & 2 \\ \cline{6-9} &&
&&& 1 & 6 & 0 & 6 \\ \cline{6-9} &&
&&& 1 & 10 & 3 & 7 \\ \cline{6-9} &&
&&& 1 & 14 & 4 & 10 \\ \cline{6-9} &&
&&& 1 & 18 & 4 & 14 \\ \cline{6-9} &&
&&& 1 & 21 & 5 & 16 \\ \cline{6-9} &&
&&& 20 & 24 & 6 & 18 \\  \hline
360 &9&19 & 3606 & 7294 & 1 & 4 & 1 & 3 \\ \cline{6-9} &&
&&& 1 & 8 & 1 & 7 \\ \cline{6-9} &&
&&& 1 & 14 & 2 & 12 \\ \cline{6-9} &&
&&& 1 & 19 & 4 & 15 \\ \cline{6-9} &&
&&& 1 & 24 & 5 & 19 \\ \cline{6-9} &&
&&& 1 & 27 & 6 & 21 \\ \cline{6-9} &&
&&& 2 & 30 & 7 & 23 \\ \cline{6-9} &&
&&& 19 & 31 & 7 & 24 \\  \hline
361 &8&13 & 3606 & 275 & 1 & 7 & 1 & 6 \\ \cline{6-9} &&
&&& 1 & 11 & 1 & 10 \\ \cline{6-9} &&
&&& 2 & 15 & 1 & 14 \\ \cline{6-9} &&
&&& 21 & 16 & 1 & 15 \\ \cline{6-9} &&
&&& 3 & 17 & 1 & 16 \\  \hline
362 &34&56 & 3601 & 0 & &&& \\ \hline
363 &4&6 & 1 & 15 & 15 & 2 & 0 & 2 \\ \hline
364 &14&24 & 3601 & 0 & &&& \\ \hline
365 &30&15 & 3601 & 0 & &&& \\ \hline
405 &8&7 & 2 & 23 & 1 & 1 & 0 & 1 \\ \cline{6-9} &&
&&& 5 & 2 & 0 & 2 \\ \cline{6-9} &&
&&& 7 & 5 & 0 & 5 \\ \cline{6-9} &&
&&& 7 & 8 & 0 & 8 \\ \cline{6-9} &&
&&& 1 & 10 & 0 & 10 \\ \hline
430 &27&52 & 3601 & 0 & &&& \\ \hline
431 & 27&52 & 3601 & 0 & &&& \\ \hline
447 & 13&24 & 3602 & 0 & &&& \\ \hline
475 & 23&34 &3603 & 0 & &&& \\ \hline
483 & 8&13 &3200 & 100 & 21 & 2 & 0 & 2 \\ \cline{6-9} &&
&&& 5 & 3 & 0 & 3 \\ \cline{6-9} &&
&&& 1 & 3 & 0 & 3 \\  \hline
530 & 10&18 & 3605 & 33180 & 2 & 1 & 0 & 1 \\ \cline{6-9} &&
&&& 1 & 2 & 0 & 2 \\ \cline{6-9} &&
&&& 6 & 9 & 0 & 9 \\ \cline{6-9} &&
&&& 1 & 12 & 2 & 10 \\ \cline{6-9} &&
&&& 9 & 14 & 2 & 12 \\ \cline{6-9} &&
&&& 7 & 15 & 2 & 13 \\  \hline
539 &6&13 &  3605 & 689 & 1 & 3 & 0 & 3 \\ \cline{6-9} &&
&&& 24 & 4 & 0 & 4 \\ \cline{6-9} &&
&&& 1 & 5 & 0 & 5 \\ \hline
552 &3&5 & 1 & 10 & &&& \\ \hline
553 & 3&5 &1 & 10 & &&& \\ \hline
609 &5&12 & 3606 & 277 & 24 & 1 & 0 & 1 \\ \hline
614 &1&4 &  1 & 5 & &&& \\ \hline
629 &5&8 & 1 & 10 & &&& \\ \hline
647 &11&17 & 3606 & 446 & 27 & 1 & 0 & 1 \\  \hline
651 &29&48 & 3601 & 0 & &&& \\ \hline
667 &103&91 & 3601 & 0 & &&& \\ \hline
676 &14&25 & 3601 & 0 & &&& \\ \hline
679 & 4&12 & 2894 & 47 & 1 & 1 & 0 & 1 \\ \cline{6-9} &&
&&& 28 & 2 & 0 & 2 \\  \hline
680 &4&12 & 3610 & 47 & 1 & 1 & 0 & 1 \\ \cline{6-9} &&
&&& 28 & 2 & 0 & 2 \\ \hline
687 & 15&17 & 3604 & 11776 & 26 & 44 & 0 & 44 \\  \hline
688 &16&21 & 3601 & 0 & &&& \\ \hline
707 &5&11 & 3627 & 341 & 6 & 1 & 0 & 1 \\ \cline{6-9} &&
&&& 9 & 2 & 0 & 2 \\ \cline{6-9} &&
&&& 8 & 3 & 0 & 3 \\ \cline{6-9} &&
&&& 1 & 4 & 0 & 4 \\ \hline
710 &7&18 & 3605 & 5891 & 2 & 1 & 0 & 1 \\ \cline{6-9} &&
&&& 1 & 3 & 0 & 3 \\ \cline{6-9} &&
&&& 23 & 5 & 0 & 5 \\ \cline{6-9} &&
&&& 1 & 6 & 0 & 6 \\ \hline
742 &2&7 &  1 & 35 & &&& \\ \hline
745 &5&14 & 3601 & 0 & &&& \\ \hline
747 &33&54 & 3601 & 0 & &&& \\ \hline
748 &4&9 & 3609 & 257 & 18 & 1 & 0 & 1 \\ \cline{6-9} &&
&&& 10 & 2 & 0 & 2 \\ \hline
755 &9&11 & 3609 & 72 & 1 & 2 & 0 & 2 \\ \cline{6-9} &&
&&& 1 & 3 & 0 & 3 \\ \cline{6-9} &&
&&& 18 & 5 & 0 & 5 \\ \cline{6-9} &&
&&& 8 & 6 & 0 & 6 \\ \cline{6-9} &&
&&& 1 & 7 & 0 & 7 \\ \hline
758 &2&5 & 1 & 9 & &&& \\ \hline
780 &4&16 & 3606 & 0 & &&& \\ \hline
781 &3&11 & 2 & 132 & &&& \\ \hline
782 &2&7 & 1 & 16 & &&& \\ \hline
783 & 3&9 &3614 & 133 & 20 & 1 & 0 & 1 \\ \cline{6-9} &&
&&& 1 & 3 & 0 & 3 \\ \cline{6-9} &&
&&& 2 & 5 & 0 & 5 \\ \cline{6-9} &&
&&& 1 & 6 & 0 & 6 \\ \cline{6-9} &&
&&& 1 & 7 & 0 & 7 \\ \hline
793 &2&5 & 1 & 12 & &&& \\ \hline
795 &2&7 & 1 & 45 & &&& \\ \hline
815 &2&7 & 1 & 1 & &&& \\ \hline
827 &10&18 & 3607 & 5504 & 1 & 28 & 3 & 25 \\ \cline{6-9} &&
&&& 1 & 36 & 3 & 33 \\ \cline{6-9} &&
&&& 1 & 56 & 4 & 52 \\ \cline{6-9} &&
&&& 1 & 66 & 5 & 61 \\ \cline{6-9} &&
&&& 1 & 73 & 5 & 68 \\ \cline{6-9} &&
&&& 1 & 75 & 5 & 70 \\ \cline{6-9} &&
&&& 1 & 76 & 6 & 70 \\ \cline{6-9} &&
&&& 1 & 77 & 7 & 70 \\ \cline{6-9} &&
&&& 1 & 78 & 7 & 71 \\ \cline{6-9} &&
&&& 2 & 79 & 7 & 72 \\ \cline{6-9} &&
&&& 12 & 81 & 7 & 74 \\ \cline{6-9} &&
&&& 4 & 84 & 8 & 76 \\ \cline{6-9} &&
&&& 1 & 85 & 8 & 77 \\ \hline
854 & 4&11 &1 & 13 & &&& \\ \hline
868 &5&10 & 1 & 50 & &&& \\ \hline
870 &7&17 & 3609 & 2648 & 2 & 1 & 1 & 0 \\ \cline{6-9} &&
&&& 20 & 3 & 3 & 0 \\ \cline{6-9} &&
&&& 4 & 4 & 3 & 1 \\  \hline
875 &4&9 & 1 & 47 & 4 & 1 & 0 & 1 \\ \cline{6-9} &&
&&& 13 & 4 & 0 & 4 \\ \cline{6-9} &&
&&& 2 & 7 & 0 & 7 \\ \hline
880 &4&15 & 3603 & 0 & &&& \\ \hline
882 &3&6 & 1 & 14 & &&& \\ \hline
886 &5&19 &  3603 & 0 & &&& \\ \hline
887 &4&11 & 3630 & 211 & 25 & 1 & 0 & 1 \\ \hline
894 &3&9 & 1 & 15 & 6 & 2 & 0 & 2 \\ \cline{6-9} &&
&&& 3 & 4 & 0 & 4 \\  \hline
905 &5&16 & 3606 & 2795 & 2 & 1 & 0 & 1 \\ \cline{6-9} &&
&&& 1 & 2 & 0 & 2 \\ \cline{6-9} &&
&&& 23 & 3 & 0 & 3 \\  \hline
906 &3&10 & 3609 & 135 & 28 & 1 & 0 & 1 \\ \hline
916 &5&7 &  17 & 24 & 1 & 1 & 0 & 1 \\ \cline{6-9} &&
&&& 13 & 2 & 0 & 2 \\ \cline{6-9} &&
&&& 3 & 4 & 0 & 4 \\ \cline{6-9} &&
&&& 1 & 6 & 0 & 6 \\ \cline{6-9} &&
&&& 8 & 8 & 0 & 8 \\  \hline
922 &3&10 & 2 & 46 & &&& \\ \hline
932 &4&8 & 7 & 36 & 20 & 2 & 0 & 2 \\ \hline
934 &16&24 & 3601 & 0 & &&& \\ \hline
940 &20&43 & 3603 & 0 & &&& \\ \hline
951 &34&72 & 3602 & 0 & &&& \\ \hline
957 &4&5 & 1 & 4 & &&& \\ \hline
968 &10&15 & 3618 & 356 & 1 & 2 & 0 & 2 \\ \cline{6-9} &&
&&& 26 & 6 & 1 & 5 \\  \hline
987 &9&12 & 3605 & 521 & 1 & 14 & 0 & 14 \\ \cline{6-9} &&
&&& 1 & 14 & 0 & 14 \\ \cline{6-9} &&
&&& 1 & 31 & 0 & 31 \\ \cline{6-9} &&
&&& 1 & 43 & 0 & 43 \\ \cline{6-9} &&
&&& 1 & 51 & 0 & 51 \\ \cline{6-9} &&
&&& 1 & 56 & 0 & 56 \\ \cline{6-9} &&
&&& 1 & 59 & 0 & 59 \\ \cline{6-9} &&
&&& 8 & 61 & 0 & 61 \\ \cline{6-9} &&
&&& 4 & 63 & 0 & 63 \\ \cline{6-9} &&
&&& 3 & 65 & 0 & 65 \\ \cline{6-9} &&
&&& 1 & 68 & 0 & 68 \\ \cline{6-9} &&
&&& 4 & 70 & 0 & 70 \\ \hline
1004 &10&21 & 3604 & 14905 & 1 & 7 & 0 & 7 \\ \cline{6-9} &&
&&& 1 & 15 & 0 & 15 \\ \cline{6-9} &&
&&& 1 & 18 & 0 & 18 \\ \cline{6-9} &&
&&& 1 & 19 & 0 & 19 \\ \cline{6-9} &&
&&& 4 & 22 & 0 & 22 \\ \cline{6-9} &&
&&& 19 & 24 & 0 & 24 \\  \hline
1021 &7&11 & 3605 & 201 & 1 & 4 & 0 & 4 \\ \cline{6-9} &&
&&& 1 & 5 & 0 & 5 \\ \cline{6-9} &&
&&& 12 & 6 & 0 & 6 \\ \cline{6-9} &&
&&& 2 & 8 & 0 & 8 \\ \cline{6-9} &&
&&& 5 & 10 & 0 & 10 \\ \cline{6-9} &&
&&& 6 & 12 & 0 & 12 \\  \hline
1024 &2&5 & 2 & 13 & &&& \\ \hline
1031 &3&5 & 1 & 16 & &&& \\ \hline
1035 &4&12 &  3601 & 0 & &&& \\ \hline
1037 &2&7 & 1 & 30 & 12 & 1 & 0 & 1 \\ \hline
1038 & 3&11 &3605 & 201 & 25 & 1 & 0 & 1 \\ \hline
1045 &3&3 & 1 & 3 & &&& \\ \hline
1053 &6&7 & 3106 & 28 & 1 & 2 & 0 & 2 \\ \cline{6-9} &&
&&& 1 & 8 & 0 & 8 \\ \cline{6-9} &&
&&& 8 & 11 & 0 & 11 \\ \cline{6-9} &&
&&& 3 & 13 & 0 & 13 \\ \cline{6-9} &&
&&& 1 & 16 & 0 & 16 \\ \cline{6-9} &&
&&& 1 & 19 & 0 & 19 \\ \cline{6-9} &&
&&& 1 & 22 & 0 & 22 \\ \cline{6-9} &&
&&& 1 & 30 & 0 & 30 \\ \cline{6-9} &&
&&& 1 & 33 & 0 & 33 \\ \cline{6-9} &&
&&& 1 & 36 & 0 & 36 \\ \cline{6-9} &&
&&& 1 & 44 & 0 & 44 \\ \cline{6-9} &&
&&& 3 & 46 & 0 & 46 \\ \cline{6-9} &&
&&& 1 & 50 & 0 & 50 \\ \cline{6-9} &&
&&& 2 & 55 & 0 & 55 \\ \cline{6-9} &&
&&& 1 & 59 & 0 & 59 \\ \cline{6-9} &&
&&& 1 & 64 & 0 & 64 \\ \hline
1054 &7&12 & 3610 & 1379 & 1 & 1 & 0 & 1 \\ \cline{6-9} &&
&&& 1 & 5 & 0 & 5 \\ \cline{6-9} &&
&&& 1 & 9 & 1 & 8 \\ \cline{6-9} &&
&&& 1 & 14 & 2 & 12 \\ \cline{6-9} &&
&&& 1 & 15 & 3 & 12 \\ \cline{6-9} &&
&&& 1 & 16 & 3 & 13 \\ \cline{6-9} &&
&&& 14 & 17 & 3 & 14 \\ \cline{6-9} &&
&&& 4 & 18 & 3 & 15 \\ \cline{6-9} &&
&&& 5 & 19 & 3 & 16 \\  \hline  
\end{longtable}
}

\clearpage
{\footnotesize
\begin{longtable}{|r|r|r|r|r|r|r|r|r|}
\caption{\textbf{$Q$-vector space} of conservation laws up to degree 5 of CRNs of the database Biomodels resulting from the \textbf{$Q$-vector space} of syzygies up to degree 4 in the generic branch of comprehensive Gröbner system. The columns represent the biomodel number, running time in seconds, the number of the generators of the syzygy \textbf{module} of the corresponding steady state ideal, dimension of the vector space of syzygies up to degree 4, 
the number of $Q$-linearly independent conservation laws obtained from those syzygies, (equivalently, the dimension of the $Q$-vector space of conservative syzygies up to degree 4), the number of $Q$-linearly independent linear conservation laws, and finally the number of $Q$-linealry independent polynomial conservation laws for
the biomodel, respectively. Blank rows show that the computations did
not finish in four hours. The major part of the computation time
in the algorithm is spent on computing a basis for the syzygy module and then computing a basis for the $Q$-vector space of syzygies up to degree 4, and for those CRNs
that the computations did not finish in four hours, in fact the syzygy $Q$-vector space basis computation did not finish in that time period (on a single core of an Intel i5-9400, 2.9 GHz processor).
\cor{$n$ and $r$ are the numbers of variables and parameters of the models, respectively. }
\label{tab:new-table1} }\\
    \hline
    Bio- & n & r& running  & \ \# syzygy  & dim. syzygy  & \# cons.\ laws & \# lin. & \# pol. \\
    Model & &  & time & module \ basis & vector space &  &  cons.\ laws & cons.\ laws  \\    
    \hline
2 & 13 & 37 & 14401 &  &  &  &  &  \\ \hline  
 28 & 16 & 31 & 14401 &  &  &  &  &  \\ \hline  
 30 & 18 & 36 & 14401 &  &  &  &  &  \\ \hline  
 69 & 10 & 25 & 14401 &  &  &  &  &  \\ \hline  
 72 & 7 & 11 & 14401 &  &  &  &  &  \\ \hline  
 80 & 10 & 17 & 14401 &  &  &  &  &  \\ \hline  
 82 & 10 & 17 & 14401 &  &  &  &  &  \\ \hline  
 85 & 17 & 42 & 14401 &  &  &  &  &  \\ \hline  
 86 & 17 & 56 & 14401 &  &  &  &  &  \\ \hline  
 92 & 4 & 7 & 32 & 3 & 155 & 20 & 2 & 18 \\ \hline  
 99 & 7 & 16 & 14401 &  &  &  &  &  \\ \hline  
 102 & 13 & 42 & 14408 &  &  &  &  &  \\ \hline  
 103 & 17 & 50 & 14401 &  &  &  &  &  \\ \hline  
 150 & 4 & 5 & 32 & 3 & 155 & 20 & 2 & 18 \\ \hline  
 156 & 3 & 8 & 1 & 3 & 36 & 0 & 0 & 0 \\ \hline  
 159 & 3 & 8 & 1 & 3 & 36 & 0 & 0 & 0 \\ \hline  
 205 & 194 & 351 & 14407 &  &  &  &  &  \\ \hline  
 243 & 23 & 19 & 14401 &  &  &  &  &  \\ \hline  
 252 & 4 & 10 & 202 & 7 & 97 & 0 & 0 & 0 \\ \hline  
 283 & 4 & 5 & 31 & 3 & 155 & 20 & 2 & 18 \\ \hline  
 335 & 34 & 54 & 14401 &  &  &  &  &  \\ \hline  
 357 & 9 & 15 & 14401 &  &  &  &  &  \\ \hline  
 359 & 9 & 19 & 14401 &  &  &  &  &  \\ \hline  
 360 & 9 & 19 & 14401 &  &  &  &  &  \\ \hline  
 361 & 8 & 13 & 14401 &  &  &  &  &  \\ \hline  
 362 & 34 & 56 & 14401 &  &  &  &  &  \\ \hline  
 363 & 4 & 6 & 12676 & 4 & 160 & 5 & 1 & 4 \\ \hline  
 365 & 30 & 15 & 14401 &  &  &  &  &  \\ \hline  
 430 & 27 & 52 & 14401 &  &  &  &  &  \\ \hline  
 431 & 27 & 52 & 14401 &  &  &  &  &  \\ \hline  
 447 & 13 & 24 & 14401 &  &  &  &  &  \\ \hline  
 475 & 23 & 34 & 14401 &  &  &  &  &  \\ \hline  
 483 & 8 & 13 & 14401 &  &  &  &  &  \\ \hline  
 539 & 6 & 13 & 14401 &  &  &  &  &  \\ \hline  
 552 & 3 & 5 & 1 & 1 & 6 & 0 & 0 & 0 \\ \hline  
 553 & 3 & 5 & 1 & 1 & 6 & 0 & 0 & 0 \\ \hline  
 609 & 5 & 12 & 1756 & 7 & 162 & 0 & 0 & 0 \\ \hline  
 614 & 1 & 4 & 1 & 0 & 0 & 0 & 0 & 0 \\ \hline  
 629 & 5 & 8 & 517 & 4 & 399 & 55 & 3 & 52 \\ \hline  
 647 & 11 & 17 & 14401 &  &  &  &  &  \\ \hline  
 651 & 29 & 48 & 14401 &  &  &  &  &  \\ \hline  
 687 & 15 & 17 & 14401 &  &  &  &  &  \\ \hline  
 688 & 16 & 21 & 14401 &  &  &  &  &  \\ \hline  
 707 & 5 & 11 & 14401 &  &  &  &  &  \\ \hline  
 710 & 7 & 18 & 14401 &  &  &  &  &  \\ \hline  
 742 & 2 & 7 & 1 & 1 & 6 & 0 & 0 & 0 \\ \hline  
 745 & 5 & 14 & 14401 &  &  &  &  &  \\ \hline  
 747 & 33 & 54 & 14401 &  &  &  &  &  \\ \hline  
 748 & 4 & 9 & 722 & 9 & 87 & 0 & 0 & 0 \\ \hline  
 755 & 9 & 11 & 14401 &  &  &  &  &  \\ \hline  
 758 & 2 & 5 & 1 & 1 & 6 & 0 & 0 & 0 \\ \hline  
 780 & 4 & 16 & 14401 &  &  &  &  &  \\ \hline  
 781 & 3 & 11 & 4 & 4 & 32 & 0 & 0 & 0 \\ \hline  
 782 & 2 & 7 & 1 & 1 & 6 & 0 & 0 & 0 \\ \hline  
 783 & 3 & 9 & 1 & 4 & 26 & 0 & 0 & 0 \\ \hline  
 793 & 2 & 5 & 1 & 1 & 6 & 0 & 0 & 0 \\ \hline  
 795 & 2 & 7 & 1 & 1 & 6 & 0 & 0 & 0 \\ \hline  
 815 & 2 & 7 & 1 & 1 & 6 & 0 & 0 & 0 \\ \hline  
 827 & 10 & 18 & 14401 &  &  &  &  &  \\ \hline  
 854 & 4 & 11 & 14401 &  &  &  &  &  \\ \hline  
 875 & 4 & 9 & 281 & 6 & 121 & 0 & 0 & 0 \\ \hline  
 880 & 4 & 15 & 14401 &  &  &  &  &  \\ \hline  
 882 & 3 & 6 & 7 & 2 & 26 & 0 & 0 & 0 \\ \hline  
 886 & 5 & 19 & 14401 &  &  &  &  &  \\ \hline  
 887 & 4 & 11 & 1428 & 8 & 107 & 0 & 0 & 0 \\ \hline  
 894 & 3 & 9 & 1 & 3 & 29 & 0 & 0 & 0 \\ \hline  
 905 & 5 & 16 & 14401 &  &  &  &  &  \\ \hline  
 906 & 3 & 10 & 22 & 3 & 22 & 0 & 0 & 0 \\ \hline  
 916 & 5 & 7 & 14401 &  &  &  &  &  \\ \hline  
 922 & 3 & 10 & 1 & 3 & 36 & 0 & 0 & 0 \\ \hline  
 932 & 4 & 8 & 7 & 6 & 91 & 0 & 0 & 0 \\ \hline  
 934 & 16 & 24 & 14401 &  &  &  &  &  \\ \hline  
 940 & 20 & 43 & 14401 &  &  &  &  &  \\ \hline  
 951 & 34 & 72 & 14401 &  &  &  &  &  \\ \hline  
 957 & 4 & 5 & 44 & 3 & 175 & 20 & 2 & 18 \\ \hline  
 987 & 9 & 12 & 14401 &  &  &  &  &  \\ \hline  
 1004 & 10 & 21 & 14401 &  &  &  &  &  \\ \hline  
 1021 & 7 & 11 & 14401 &  &  &  &  &  \\ \hline  
 1024 & 2 & 5 & 1 & 1 & 6 & 0 & 0 & 0 \\ \hline  
 1031 & 3 & 5 & 1 & 3 & 36 & 0 & 0 & 0 \\ \hline  
 1035 & 4 & 12 & 14401 &  &  &  &  &  \\ \hline  
 1037 & 2 & 7 & 1 & 1 & 6 & 0 & 0 & 0 \\ \hline  
 1038 & 3 & 11 & 3 & 4 & 29 & 0 & 0 & 0 \\ \hline  
 1045 & 3 & 3 & 2 & 2 & 55 & 5 & 1 & 4 \\ \hline  
 1053 & 6 & 7 & 13567 & 15 & 540 & 0 & 0 & 0 \\ \hline  
 1054 & 7 & 12 & 14405 &  &  &  &  &  \\ \hline    
\end{longtable}
}

\clearpage
{\footnotesize
\begin{longtable}{|r|r|r|r|r|r|r|r|r|}
\caption{$Q$-vector space of parametric conservation laws of CRNs of the database Biomodels resulting from computations on \textbf{$Q$-vector space} of syzygies up to degree 4 in each branch of a comprehensive Gröbner basis, using our Singular library \texttt{polconslaw.lib}.
  The columns represent Biomodel number, the running time in seconds,
  number of branches of the corresponding comprehensive Gr\"obner
  system computed within the running time (four hours
  on a single core of an Intel i5-9400, 2.9 GHz processor), number of the
  branch, the total number of $Q$-linearly independent conservation laws computed  from the \textbf{$Q$-vector space} of syzygies up to degree 4 in the
  branch, number of $Q$-linearly independent linear conservation laws computed, and finally,
  the number of $Q$-linearly independent polynomial conservation laws computed in the
  branch.
  Rows whose running time was above four hours and did not finish
  computing conservation laws within that branch, have blank entries
  under the columns of number of branch, number of sons. laws, and (linear/polynomial) conservation   laws.
For all the other branches, 
the branches leading to conservation laws are listed in the order they are found, with 
the corresponding number of conservation laws. Branches are missing because they were not
computed in four hours or because they do not lead to conservation laws (for instance 
branches 1-2 of Biomodel 99 lead to no conservation law). 
\cor{To simplify the table we regrouped the branches with the same numbers of conservation
laws and displayed only the numbers per subgroup. 
The gray rows
correspond to branches containing polynomial 
conservation laws algebraically independent
from  the linear ones.
$n$ and $r$ are the numbers of variables and parameters of the models, respectively. }
\label{tab:new-table2}}\\
    \hline
   Bio- & n & r & running  & \# branches & \# branches & \# cons.& \# lin.& \# pol. \\
    Model &  &  & time &  & per subgroup  &  laws &  cons. laws & cons. laws \\
    \hline
2  & 13 & 37& 14401  & None & &&& \\ \hline 
 28  & 16 & 31& 14401  & None & &&& \\ \hline 
 30  & 18 & 36& 14401  & None & &&& \\ \hline 
 69  & 10 & 25& 14401  & 12354 & &&& \\ \hline 
 72  & 7 & 11& 14401  & 74 & 1 & 14 & 2 & 12 \\ \cline{6-9} && 
 &&& 2 & 34 & 3 & 31 \\ \cline{6-9} && 
 &&& 2 & 69 & 4 & 65 \\ \cline{6-9} && 
 &&& 1 & 125 & 5 & 120 \\ \cline{6-9} && 
 &&& 2 & 209 & 6 & 203 \\ \hline 
 80  & 10 & 17& 14401  & 264 & &&& \\ \hline 
 82  & 10 & 17& 14401  & 344 & &&& \\ \hline 
 85  & 17 & 42& 14401  & None & &&& \\ \hline 
 86  & 17 & 56& 14401  & None & &&& \\ \hline 
 92  & 4 & 7& 87  & 10 & 6 & 14 & 2 & 12 \\ \cline{6-9} && 
 &&& 3 & 34 & 3 & 31 \\ \cline{6-9} && 
 &&& 1 & 69 & 4 & 65 \\ \hline 
 99  & 7 & 16& 14401  & 3783 & 2 & 4 & 1 & 3 \\ \cline{6-9} && 
 &&& 2 & 14 & 2 & 12 \\ \cline{6-9} && 
 &&& 2 & 34 & 3 & 31 \\ \cline{6-9} && 
 &&& 2 & 69 & 4 & 65 \\ \cline{6-9} && 
 &&& 2 & 125 & 5 & 120 \\ \cline{6-9} && 
 &&& 1 & 209 & 6 & 203 \\ \hline 
 102  & 13 & 42& 14401  & None & &&& \\ \hline 
 103  & 17 & 50& 14401  & None & &&& \\ \hline 
 150  & 4 & 5& 101  & 10 & 4 & 14 & 2 & 12 \\ \cline{6-9} && 
 &&& 5 & 34 & 3 & 31 \\ \cline{6-9} && 
 &&& 1 & 69 & 4 & 65 \\ \hline 
 156  & 3 & 8& 12  & 25 & \cellcolor{Gray} 1 &\cellcolor{Gray} 8 &\cellcolor{Gray} 1 & \cellcolor{Gray}7 \\ \cline{6-9} && 
 &&& 9 & 4 & 1 & 3 \\ \cline{6-9} && 
 &&& 5 & 14 & 2 & 12 \\ \cline{6-9} && 
 &&& 2 & 34 & 3 & 31 \\ \hline 
 159  & 3 & 8& 14  & 36 & 13 & 4 & 1 & 3 \\ \cline{6-9} && 
 &&& 7 & 14 & 2 & 12 \\ \cline{6-9} && 
 &&& 1 & 34 & 3 & 31 \\ \hline 
 205  & 194 & 351& 14401  & None & &&& \\ \hline 
 243  & 23 & 19& 14401  & 55296 & &&& \\ \hline 
 252  & 4 & 10& 3372  & 175 & 11 & 4 & 1 & 3 \\ \cline{6-9} && 
 &&& 13 & 14 & 2 & 12 \\ \cline{6-9} && 
 &&& 4 & 34 & 3 & 31 \\ \cline{6-9} && 
 &&& 1 & 69 & 4 & 65 \\ \hline 
 283  & 4 & 5& 16  & 2 & 1 & 14 & 2 & 12 \\ \cline{6-9} && 
 &&& 1 & 34 & 3 & 31 \\ \hline 
 335  & 34 & 54& 14401  & None & &&& \\ \hline 
 357  & 9 & 15& 14401  & 1953 & &&& \\ \hline 
 359  & 9 & 19& 14401  & 9741 & &&& \\ \hline 
 360  & 9 & 19& 14401  & 7294 & &&& \\ \hline 
 361  & 8 & 13& 14401  & 275 & 1 & 34 & 3 & 31 \\ \cline{6-9} && 
 &&& 2 & 69 & 4 & 65 \\ \hline 
 362  & 34 & 56& 14401  & None & &&& \\ \hline 
 363  & 4 & 6& 134  & 15 & 3 & 4 & 1 & 3 \\ \cline{6-9} && 
 &&& 6 & 14 & 2 & 12 \\ \cline{6-9} && 
 &&& 5 & 34 & 3 & 31 \\ \cline{6-9} && 
 &&& 1 & 69 & 4 & 65 \\ \hline 
 365  & 30 & 15& 14401  & None & &&& \\ \hline 
 430  & 27 & 52& 14401  & None & &&& \\ \hline 
 431  & 27 & 52& 14401  & None & &&& \\ \hline 
 447  & 13 & 24& 14401  & None & &&& \\ \hline 
 475  & 23 & 34& 14401  & None & &&& \\ \hline 
 483  & 8 & 13& 14401  & 100 & 2 & 69 & 4 & 65 \\ \cline{6-9} && 
 &&& 1 & 125 & 5 & 120 \\ \hline 
 539  & 6 & 13& 14401  & 689 & &&& \\ \hline 
 552  & 3 & 5& 1  & 10 & \cellcolor{Gray} 1 & \cellcolor{Gray} 2 & \cellcolor{Gray} 0 & \cellcolor{Gray} 2\\ \cline{6-9} && 
 &&&  3 & 4 & 1 & 3  \\ \cline{6-9} && 
 &&& 1 & 14 & 2 & 12 \\ \hline 
 553  & 3 & 5& 1  & 10 & \cellcolor{Gray} 1 & \cellcolor{Gray} 2 & \cellcolor{Gray} 0 & \cellcolor{Gray} 2 \\ \cline{6-9} && 
 &&&3 & 4 & 1 & 3 \\ \cline{6-9} && 
 &&& 1 & 14 & 2 & 12 \\ \hline 
 609  & 5 & 12& 5611  & 277 & 6 & 4 & 1 & 3 \\ \cline{6-9} && 
 &&& 8 & 14 & 2 & 12 \\ \cline{6-9} && 
 &&& 9 & 34 & 3 & 31 \\ \cline{6-9} && 
 &&& 4 & 69 & 4 & 65 \\ \cline{6-9} && 
 &&& 1 & 125 & 5 & 120 \\ \hline 
 614  & 1 & 4& 1  & 5 & 2 & 4 & 1 & 3 \\ \hline 
 629  & 5 & 8& 872  & 10 & 5 & 34 & 3 & 31 \\ \cline{6-9} && 
 &&& 4 & 69 & 4 & 65 \\ \cline{6-9} && 
 &&& 1 & 125 & 5 & 120 \\ \hline 
 647  & 11 & 17& 14401  & 446 & &&& \\ \hline 
 651  & 29 & 48& 14401  & None & &&& \\ \hline 
 687  & 15 & 17& 14406  & 11775 & &&& \\ \hline 
 688  & 16 & 21& 14401  & None & &&& \\ \hline 
 707  & 5 & 11& 14401  & 341 & &&& \\ \hline 
 710  & 7 & 18& 14401  & 5891 & 2 & 4 & 1 & 3 \\ \cline{6-9} && 
 &&& 2 & 14 & 2 & 12 \\ \cline{6-9} && 
 &&& 3 & 34 & 3 & 31 \\ \cline{6-9} && 
 &&& 1 & 69 & 4 & 65 \\ \cline{6-9} && 
 &&& 2 & 125 & 5 & 120 \\ \hline 
 742  & 2 & 7& 1  & 35 & 7 & 4 & 1 & 3 \\ \cline{6-9} && 
 &&& 1 & 14 & 2 & 12 \\ \hline 
 745  & 5 & 14& 14401  & None & &&& \\ \hline 
 747  & 33 & 54& 14417  & None & &&& \\ \hline 
 748  & 4 & 9& 5471& 257 & 14 & 4 & 1 & 3 \\ \cline{6-9} && 
 &&& \cellcolor{Gray} 1 &\cellcolor{Gray} 8 &\cellcolor{Gray} 1 &\cellcolor{Gray} 7 \\ \cline{6-9} && 
 &&& 9 & 14 & 2 & 12 \\ \cline{6-9} && 
 &&& \cellcolor{Gray}1 & \cellcolor{Gray}21 & \cellcolor{Gray}2 & \cellcolor{Gray}19 \\ \cline{6-9} && 
 &&& 3 & 34 & 3 & 31 \\ \hline 
 755  & 9 & 11& 14401  & 72 & 1 & 34 & 3 & 31 \\ \cline{6-9} && 
 &&& 1 & 69 & 4 & 65 \\ \hline 
 758  & 2 & 5& 1  & 9 & \cellcolor{Gray} 1 & \cellcolor{Gray}2 &\cellcolor{Gray} 0 & \cellcolor{Gray}2 \\ \cline{6-9} && 
 &&& 3 & 4 & 1 & 3 \\ \cline{6-9} && 
 &&& 1 & 14 & 2 & 12 \\ \hline 
 780  & 4 & 16& 14401  & None & &&& \\ \hline 
 781  & 3 & 11& 4031  & 132 & 16 & 4 & 1 & 3 \\ \cline{6-9} && 
 &&& 9 & 14 & 2 & 12 \\ \cline{6-9} && 
 &&& 2 & 34 & 3 & 31 \\ \hline 
 782  & 2 & 7& 1  & 16 & 1 & 4 & 1 & 3 \\ \cline{6-9} && 
 &&& 1 & 14 & 2 & 12 \\ \hline 
 783  & 3 & 9& 9903  & 133 & 25 & 4 & 1 & 3 \\ \cline{6-9} && 
 &&& 3 & 14 & 2 & 12 \\ \hline 
 793  & 2 & 5& 1  & 12 & 3 & 4 & 1 & 3 \\ \cline{6-9} && 
 &&& 1 & 14 & 2 & 12 \\ \hline 
 795  & 2 & 7& 1  & 45 &  5 & 4 & 1 & 3  \\ \cline{6-9} && 
 &&& 1 & 14 & 2 & 12  \\ \hline 
 815  & 2 & 7& 1  & 1 & &&& \\ \hline 
 827  & 10 & 18& 14401  & 5504 & 1 & 34 & 3 & 31\\ \hline 
 854  & 4 & 11& 2444  & 13 & 8 & 4 & 1 & 3 \\ \cline{6-9} && 
 &&& 4 & 14 & 2 & 12 \\ \cline{6-9} && 
 &&& 1 & 69 & 4 & 65 \\ \hline 
 875  & 4 & 9& 217  & 47 & 7 & 4 & 1 & 3 \\ \cline{6-9} && 
 &&& 9 & 14 & 2 & 12 \\ \cline{6-9} && 
 &&& 2 & 21 & 2 & 19 \\ \cline{6-9} && 
 &&& 5 & 34 & 3 & 31 \\ \cline{6-9} && 
 &&& 1 & 69 & 4 & 65 \\ \hline 
 880  & 4 & 15& 14401  & None & &&& \\ \hline 
 882  & 3 & 6& 8  & 14 & 2 & 4 & 1 & 3 \\ \cline{6-9} && 
 &&& 4 & 14 & 2 & 12 \\ \cline{6-9} && 
 &&& 1 & 34 & 3 & 31 \\ \hline 
 886  & 5 & 19& 14401  & None & &&& \\ \hline 
 887  & 4 & 11& 3209 & 211 & 15 & 4 & 1 & 3 \\ \cline{6-9} && 
 &&&\cellcolor{Gray} 1 &\cellcolor{Gray} 7 &\cellcolor{Gray} 1 & \cellcolor{Gray}6 \\ \cline{6-9} && 
 &&&\cellcolor{Gray} 1 &\cellcolor{Gray} 8 &\cellcolor{Gray} 1 & \cellcolor{Gray}7 \\ \cline{6-9} && 
 &&& 8 & 14 & 2 & 12 \\ \cline{6-9} && 
 &&&\cellcolor{Gray} 1 & \cellcolor{Gray}21 &\cellcolor{Gray} 2 & \cellcolor{Gray}19 \\ \cline{6-9} && 
 &&& 1 & 34 & 3 & 31 \\ \cline{6-9} && 
 &&& 1 & 69 & 4 & 65 \\ \hline 
 894  & 3 & 9& 2  & 15 & &&& \\ \hline 
 905  & 5 & 16& 14437  & 2795 & 2 & 4 & 1 & 3 \\ \cline{6-9} && 
 &&& 3 & 14 & 2 & 12 \\ \cline{6-9} && 
 &&& 16 & 34 & 3 & 31 \\ \cline{6-9} && 
 &&& 6 & 69 & 4 & 65 \\ \cline{6-9} && 
 &&& 1 & 125 & 5 & 120 \\ \hline 
 906  & 3 & 10& 6456  & 135 & 20 & 4 & 1 & 3 \\ \cline{6-9} && 
 &&& 7 & 14 & 2 & 12 \\ \cline{6-9} && 
 &&& 1 & 34 & 3 & 31 \\ \hline 
 916  & 5 & 7& 14401  & 24 & &&& \\ \hline 
 922  & 3 & 10& 14401  & 46 & 2 & 4 & 1 & 3 \\ \hline 
 932  & 4 & 8& 9805  & 36 &\cellcolor{Gray} 1 & \cellcolor{Gray}8 & \cellcolor{Gray}1 & \cellcolor{Gray}7 \\ \cline{6-9} && 
 &&& 10 & 4 & 1 & 3 \\ \cline{6-9} && 
 &&& 8 & 14 & 2 & 12 \\ \cline{6-9} && 
 &&& 6 & 34 & 3 & 31 \\ \cline{6-9} && 
 &&& 2 & 69 & 4 & 65 \\ \hline 
 934  & 16 & 24& 14401  & None & &&& \\ \hline 
 940  & 20 & 43& 14401  & None & &&& \\ \hline 
 951  & 34 & 72& 14414  & None & &&& \\ \hline 
 957  & 4 & 5& 43  & 4 & 2 & 14 & 2 & 12 \\ \cline{6-9} && 
 &&& 1 & 34 & 3 & 31 \\ \cline{6-9} && 
 &&& 1 & 69 & 4 & 65 \\ \hline 
 987  & 9 & 12& 14401  & 521 & &&& \\ \hline 
 1004  & 10 & 21& 14401  & 14905 & &&& \\ \hline 
 1021  & 7 & 11& 14408  & 201 & 11 & 4 & 1 & 3 \\ \cline{6-9} && 
 &&& 2 & 14 & 2 & 12 \\ \cline{6-9} && 
 &&& 5 & 34 & 3 & 31 \\ \cline{6-9} && 
 &&& 2 & 45 & 3 & 42 \\ \cline{6-9} && 
 &&& 8 & 69 & 4 & 65 \\ \cline{6-9} && 
 &&& 4 & 125 & 5 & 120 \\ \cline{6-9} && 
 &&& 1 & 209 & 6 & 203 \\ \hline 
 1024  & 2 & 5& 1  & 13 & 1 & 4 & 1 & 3 \\ \cline{6-9} && 
 &&& 1 & 14 & 2 & 12 \\ \hline 
 1031  & 3 & 5& 4  & 16 & 6 & 4 & 1 & 3 \\ \cline{6-9} && 
 &&& 2 & 14 & 2 & 12 \\ \hline 
 1035  & 4 & 12& 14406  & None & &&& \\ \hline 
 1037  & 2 & 7& 1  & 30 & 7 & 4 & 1 & 3 \\ \cline{6-9} && 
 &&& 1 & 14 & 2 & 12 \\ \hline 
 1038  & 3 & 11& 6537  & 201 & 19 & 4 & 1 & 3 \\ \cline{6-9} && 
 &&& 8 & 14 & 2 & 12 \\ \cline{6-9} && 
 &&& 1 & 34 & 3 & 31 \\ \hline 
 1045  & 3 & 3& 4  & 3 & 3 & 4 & 1 & 3 \\ \hline 
 1053  & 6 & 7& 14401  & 28 & 5 & 4 & 1 & 3 \\ \cline{6-9} && 
 &&&\cellcolor{Gray} 1 & \cellcolor{Gray}21 & \cellcolor{Gray}2 & \cellcolor{Gray}19 \\ \cline{6-9} && 
 &&& \cellcolor{Gray} 1 & \cellcolor{Gray}17 & \cellcolor{Gray}2 & \cellcolor{Gray}15 \\ \cline{6-9} && 
 &&& 1 & 34 & 3 & 31 \\ \cline{6-9} && 
 &&& \cellcolor{Gray} 1 & \cellcolor{Gray}38 &\cellcolor{Gray} 3 & \cellcolor{Gray}35 \\ \cline{6-9} && 
 &&& 3 & 69 & 4 & 65 \\ \cline{6-9} && 
 &&& 1 & 125 & 5 & 120 \\ \cline{6-9} && 
 &&& 2 & 209 & 6 & 203 \\ \hline 
 1054  & 7 & 12& 14401  & 1379 &  &  & &  \\ \hline 
\end{longtable}
}

\end{document}